\newtheoremstyle{theoremstyle}
  {10pt}      
  {5pt}       
  {\itshape}  
  {}          
  {\bfseries} 
  {:}         
  {.5em}      
  {}          
\newtheoremstyle{examplestyle}
  {10pt}      
  {5pt}       
  {}          
  {}          
  {\bfseries} 
  {:}         
  {.5em}      
  {}          
\theoremstyle{theoremstyle}
\newtheorem{theorem}{Theorem}[section]
\newtheorem*{theorem*}{Theorem}
\newtheorem{lemma}[theorem]{Lemma}
\newtheorem{proposition}[theorem]{Proposition}
\newtheorem*{proposition*}{Proposition}
\newtheorem{corollary}[theorem]{Corollary}
\newtheorem*{corollary*}{Corollary}
\newtheorem{definition}[theorem]{Definition}
\newtheorem{definition*}{Definition}
\newtheorem{remark}[theorem]{Remark}
\newtheorem{remark*}{Remark}
\newtheorem{problem}[theorem]{Problem}
\newcommand{\A}{\mathbf{A}}
\newcommand{\G}{\mathbf{G}_{\mathfrak{m}}}
\newcommand{\AAA}{\mathsf{A}}
\newcommand{\B}{\mathsf{B}}
\newcommand{\E}{\mathsf{E}}
\newcommand{\F}{\mathsf{F}}
\newcommand{\fff}{\mathsf{f}}
\newcommand{\GG}{\mathsf{G}}
\newcommand{\caE}{{\mathcal E}}
\newcommand{\caK}{{\mathcal K}}
\newcommand{\MM}{{\mathscr M}}
\newcommand{\ZZ}{{\mathscr Z}}
\newcommand{\caO}{{\mathcal O}}
\newcommand{\x}{\mathsf{x}}
\newcommand{\PP}{\mathbf{P}}
\newcommand{\CP}{\mathbf{CP}}
\newcommand{\CC}{\mathbf{C}}
\newcommand{\BG}{\mathsf{B}\mathbf{G}_{\mathfrak{m}}}
\newcommand{\BS}{\mathsf{B}S^{1}}
\newcommand{\BGL}{\mathsf{B}\mathbf{GL}}
\newcommand{\KGL}{\mathsf{KGL}}
\newcommand{\MGL}{\mathsf{MGL}}
\newcommand{\kgl}{\mathsf{kgl}}
\newcommand{\Tor}{\mathsf{Tor}}
\newcommand{\MZ}{\mathsf{M}\mathbf{Z}}
\newcommand{\MQ}{\mathsf{M}\mathbf{Q}}
\newcommand{\Ch}{\mathsf{Ch}}
\newcommand{\PM}{\mathsf{PM}}
\newcommand{\KU}{\mathsf{KU}}
\newcommand{\MU}{\mathsf{MU}}
\newcommand{\unit}{\mathbf{1}}
\newcommand{\Z}{\mathbf{Z}}
\newcommand{\Q}{\mathbf{Q}}
\newcommand{\QQ}{\mathsf{Q}}
\newcommand{\Sq}{\mathsf{Sq}}
\newcommand{\X}{\mathscr{X}}
\newcommand{\Y}{\mathscr{Y}}
\newcommand{\QQQ}{\mathscr{Q}}
\newcommand{\Spc}{\mathbf{Spc}}
\newcommand{\Sm}{\mathbf{Sm}}
\newcommand{\SH}{\mathbf{SH}}
\newcommand{\Ho}{\mathbf{Ho}}
\newcommand{\HH}{\mathbf{H}}
\newcommand{\Thom}{\mathsf{Th}}
\newcommand{\Mod}{\mathbf{Mod}}
\newcommand{\Pic}{\mathbf{Pic}}
\newcommand{\bL}{\mathbf{L}}
\newcommand{\Nis}{\mathbf{Nis}}
\newcommand{\Hom}{\mathrm{Hom}}
\newcommand{\uHom}{\underline{\mathrm{Hom}}}
\newcommand{\eff}{\mathbf{eff}}
\newcommand{\Veff}{\mathbf{Veff}}
\newcommand{\colim}{\mathrm{colim}}
\newcommand{\hocolim}{\mathrm{hocolim}}
\newcommand{\proj}{\mathbf{proj}}
\newcommand{\Spt}{\mathsf{Spt}}
\newcommand{\rto}{\rightarrow}
\newcommand{\lto}{\leftarrow}
\title{{\bf Motivic twisted K-theory}}
\author{Markus Spitzweck, Paul Arne {\O}stv{\ae}r}
\date{August 1, 2010}
\begin{document}
\maketitle
\begin{abstract}
This paper sets out basic properties of motivic twisted K-theory with respect to degree three motivic cohomology classes of weight one.
Motivic twisted K-theory is defined in terms of such motivic cohomology classes by taking pullbacks along the universal principal $\BG$-bundle for the 
classifying space of the multiplicative group scheme.
We show a K{\"u}nneth isomorphism for homological motivic twisted K-groups computing the latter as a tensor product of K-groups over the K-theory of $\BG$.
The proof employs an Adams Hopf algebroid and a tri-graded Tor-spectral sequence for motivic twisted K-theory.
By adopting the notion of an $E_{\infty}$-ring spectrum to the motivic homotopy theoretic setting, 
we construct spectral sequences relating motivic (co)homology groups to twisted K-groups.
It generalizes various spectral sequences computing the algebraic K-groups of schemes over fields.
Moreover, 
we construct a Chern character between motivic twisted K-theory and twisted periodized rational motivic cohomology, 
and show that it is a rational isomorphism.
The paper includes a discussion of some open problems.
\end{abstract}
{\small{\tableofcontents}}
\newpage

\section{Motivation}
\label{section:motivation}

Topological K-theory has many variants which have all been developed and exploited for geometric purposes. 
Twisted K-theory or ``K-theory with coefficients'' was introduced by Donovan and Karoubi in \cite{Karoubi-Donovan} using Wall's graded Brauer group.
More general twistings of K-theory arise from automorphisms of its classifying space of Fredholm operators on an infinite dimensional separable complex Hilbert space.
Of particular geometric interest are twistings given by integral $3$-dimensional cohomology classes.
The subject was further developed in the direction of analysis by Rosenberg in \cite{Rosenberg}.

Twisted K-theory resurfaced in the late 1990's with Witten's work on classification of D-branes charges in type II string theory \cite{Witten}.
Fruitful interactions between algebraic topology and physics afforded by twisted K-theory continues today,
see e.g.~\cite{ArbeitMFO}, \cite{Atiyah-Segal1}, \cite{Atiyah-Segal2}, \cite{BCMMS} and \cite{TXLG}.
The work of Freed, Hopkins and Teleman \cite{FHT} relates the twisted equivariant K-theory of a compact Lie group $G$ to the ``Verlinde ring'' of its loop group.
For a recent survey of twisted K-theory we refer to \cite{Karoubi}. 

We are interested in twistings of the motivic K-theory spectrum $\KGL$ in the algebro-geometric context of motivic homotopy theory  \cite{DLORV}, \cite{Voevodsky-icm}.
Over the complex numbers $\CC$, 
or more generally any field with a complex embedding, 
our motivic twisted K-theory specializes to twisted K-theory under realization of complex points.
The idea of twisting (co)homology theories have been used to great effect in topology. 
A classical example is cohomology with local coefficients, 
which can be used to formulate Poincar{\'e} duality and the Thom isomorphism for unorientable manifolds. 
Analogous motivic results are subject to future work.

In \cite{ABG}, 
Ando, Blumberg and Gepner use the formalism of $\infty$-categories in order to construct twisted forms of multiplicative generalized (co)homology theories, 
and in \cite{MS}, May and Sigurdsson employ parametrized spectra to the same end.
Their setups suggest analogous algebro-geometric generalizations.

\section{Main results}
\label{section:mainresults}

The isomorphism classes of principal $\BS$-bundles over a topological space $X$ identifies canonically with the homotopy classes of maps from $X$ to the Eilenberg-MacLane 
space $K(\Z,3)$.
The second delooping $\B\BS$ of the circle gives a concrete model for $K(\Z,3)$. 
We begin the paper by considering the analogous setup in motivic homotopy theory.

Let $\X$ be a motivic space and $\G$ the multiplicative group scheme over a noetherian base scheme $S$ of finite dimension (usually left implicit in the notation).

For any map $\tau\colon\X\to\B\BG$ define $\X^{\tau}$ as the homotopy pullback along $\ast\rto\B\BG$ 
(which can be thought of as a universal principal $\BG$-bundle):
\begin{equation*}
\xymatrix{
\X^{\tau}\ar[r]\ar[d] & 
\ast \ar[d] \\
\X\ar[r]^-{\tau}  & \B\BG
}
\end{equation*}
With this definition there is a naturally induced module map
\begin{equation*}
\PP^{\infty}\times\X^{\tau}
\rto
\X^{\tau}.
\end{equation*}
Here we use implicitly the motivic weak equivalence between $\BG$ and the infinite projective space $\PP^{\infty}$.
By passing to motivic suspension spectra we get a naturally induced map
\begin{equation*}
\Sigma^{\infty}\PP^{\infty}_{+}\wedge\Sigma^{\infty}\X^{\tau}_{+}
\rto
\Sigma^{\infty}\X^{\tau}_{+}
\end{equation*}
displaying $\Sigma^{\infty}\X^{\tau}_{+}$ as a module over the motivic ring spectrum $\Sigma^{\infty}\PP^{\infty}_{+}$.
(We defer the somewhat technical definition of this module structure to Section \ref{section:groupactionsandquotientspaces}.)

When $S$ is a smooth scheme over a field we can identify the homotopy classes of maps from $\X$ to $\B\BG$ with the third integral motivic cohomology group $\MZ^{3,1}(\X)$ of weight one.
This group is in fact trivial for smooth schemes of finite type over $S$ by \cite[Corollary 3.2.1]{Suslin-Voevodsky}.
On the other hand, 
it is nontrivial for motivic spheres.

Denote by $\BGL$ the classifying space of the infinite Grassmannian over $S$.
Suppose $f_{\zeta}\colon\X\rto\PP^{\infty}$ and $f_{\xi}\colon\X\rto\Z\times\BGL$ represent $\zeta\in\MZ^{2,1}(\X)$ and $\xi\in\KGL_{\ast}(\X)$, respectively. 
Then the composite map
\begin{equation*}
\X
\overset{\Delta}{\rto}
\X\times\X
\overset{f_{\zeta}\times f_{\xi}}{\rto}
\PP^{\infty}\times (\Z\times\BGL)
\rto
\Z\times\BGL
\end{equation*}
represents an element $\xi\otimes\zeta\in\KGL_{\ast}(\X)$.
The above defines the action of the Picard group on the K-theory ring of $\X$.
On the level of motivic spectra there exists a corresponding composite map 
\begin{equation*}
\Sigma^{\infty}\PP^{\infty}_{+}\wedge\KGL
\rto
\KGL\wedge\KGL
\rto
\KGL,
\end{equation*}
where the second map is the ring multiplication on $\KGL$.
The first map is obtained via adjointness from the multiplicative map $\BG\rto\{1\}\times\BGL$ that sends a line bundle represented by a map into $\PP^{\infty}$ to its class in the Grothendieck group of all vector bundles \cite[(3)]{Spitzweck-Ostvar}. 
Thus the motivic K-theory spectrum $\KGL$ is a module over $\Sigma^{\infty}\PP^{\infty}_{+}$.

We are now ready to define our main objects of study in this paper.
The distinction between homological and cohomological versions of motivic twisted K-theory is rooted in standard nomenclature for twisted K-theory.
\begin{definition}
For $\tau\colon\X\rto\B\BG$ define the motivic twisted 
\begin{itemize}
\item homological K-theory of $\tau$ by $\KGL^{\tau}\equiv\Sigma^{\infty}\X^{\tau}_{+}\wedge_{\Sigma^{\infty}\PP^{\infty}_{+}}\KGL$.
\item cohomological K-theory of $\tau$ by $\KGL_{\tau}\equiv\uHom_{\Sigma^\infty \PP^\infty_+}(\Sigma^\infty \X^\tau_+,\KGL)$.
\end{itemize}
\end{definition}

The smash product in the definition of $\KGL^{\tau}$ is derived in the sense that it is formed in the homotopy category of highly structured modules over $\Sigma^{\infty}\PP^{\infty}_{+}$.
In order to make sense of the derived smash product, 
we implicitly use a closed symmetric monoidal model for the motivic stable homotopy category, 
see Jardine's work on motivic symmetric spectra \cite{Jardine}, for example.
Later in the paper we prove that homotopy equivalent maps from $\X$ to $\B\BG$ give rise to isomorphic motivic twisted K-theories.
In addition, 
the derived style definition of $\KGL^{\tau}$ requires a strict ring model for $\KGL$ as a $\Sigma^{\infty}\PP^{\infty}_{+}$-ring spectrum.
Such a model was only recently constructed in \cite{Rondigs-Spitzweck-Ostvar} using the Bott tower 
\begin{equation}
\label{equation:Botttower}
\Sigma^{\infty}\PP^{\infty}_{+}
\overset{\beta}{\rto}
\Sigma^{-2,-1}\Sigma^{\infty}\PP^{\infty}_{+}
\overset{\Sigma^{-2,-1}\beta}{\rto}
\cdots.
\end{equation}
Similarly,
in the cohomological setup,
the hom-object appearing in the definition of  $\KGL_{\tau}$ is formed in the homotopy category of $\Sigma^{\infty}\PP^{\infty}_{+}$-modules.

An alternate definition of $\KGL^{\tau}$ can be made precise by simply inverting the $(2,1)$ self-map of $\Sigma^{\infty}\X^{\tau}_{+}$ obtained from the Bott map realizing K-theory $\KGL$ 
as the Bott inverted infinite projective space. 
(The Bott map $\beta$ is indeed a $\Sigma^{\infty}\PP^{\infty}_{+}$-module map by construction.) 
Independent proofs of the latter result have appeared in  \cite{Gepner-Snaith} and \cite{Spitzweck-Ostvar}.
For other discussions of the Bott inverted model for K-theory we refer to \cite{nmp-nonregular}, \cite{nmp-exactness}, \cite{Rondigs-Spitzweck-Ostvar}.
Making use of the Bott map provides also an alternate definition of $\KGL_{\tau}$.
We shall be using this viewpoint on a number of occasions in this paper.

Now suppose the twisting class $\tau$ for $\X$ is null and the base scheme $S$ is regular.
In Section \ref{section:groupactionsandquotientspaces} we show that the homotopy fiber $\X^{\tau}$ identifies with the product $\PP^{\infty}\times\X$ and $\KGL^{\tau}$ identifies with the 
smash product $\Sigma^{\infty}\X_{+}\wedge\KGL$.
Accordingly, 
we may view motivic twisted K-theory as a generalization of K-theory.
In the course of the paper we shall work out some of the differences and similarities arising from this generalization, 
and suggest some open problems.

For a general twist $\tau$ it turns out that the motivic twisted K-theory spectra $\KGL^{\tau}$ and $\KGL_{\tau}$ do not acquire ring structures in the motivic stable homotopy category.
In particular, 
the motivic twisted K-groups $\KGL^{\tau}_{\ast}$ do not form a ring in general.
The lack of a product structure tends to complicate computations.
On the other hand we establish two far more powerful tools for performing computations.
First we prove a K{\"u}nneth isomorphism for homological motivic twisted K-groups and second we construct spectral sequences relating motivic (co)homology to motivic twisted K-groups.

By applying the Tor-spectral sequence in \cite[Proposition 7.7]{Dugger-Isaksen} to the commutative motivic ring spectrum $\Sigma^{\infty}\PP^{\infty}_{+}\wedge\KGL$ and its modules 
$\KGL$ and $\Sigma^{\infty}\X^{\tau}_{+}\wedge\KGL$ we arrive at the strongly convergent tri-graded spectral sequence
\begin{equation}
\label{equation:torss}
\Tor^{\KGL_{\ast}(\PP^{\infty})}_{a,(b,c)}(\KGL_{\ast}(\X^{\tau}),\KGL_{\ast})
\Rightarrow
\KGL_{a+b,c}^{\tau}(\X).
\end{equation}
Here we should infer that $\Sigma^{\infty}\PP^{\infty}_{+}\wedge\KGL$ and $\KGL$ are stably cellular motivic ring spectra, 
a.k.a.~``Tate spectra'' \cite{nmp-nonregular}.
To begin with, 
the motivic K-theory spectrum $\KGL$ is stably cellular \cite[Theorem 6.2]{Dugger-Isaksen}.
The suspension spectrum of the pointed infinite projective space is also stably cellular by \cite[Propositions 2.13, 2.17, Lemma 3.1]{Dugger-Isaksen} since any filtered colimit of stably cellular 
motivic spaces is stably cellular \cite[Definition 2.1(3)]{Dugger-Isaksen}.
Hence the smash product $\Sigma^{\infty}\PP^{\infty}_{+}\wedge\KGL$ is cellular.
(Although the case of fields is emphasized in \cite{Dugger-Isaksen} the results we employ from loc.~cit.~hold over arbitrary noetherian base schemes 
of finite dimension.)
\begin{theorem}
\label{theorem:edgemapisomorphism}
The edge homomorphism in the Tor-spectral sequence (\ref{equation:torss}) induces a natural isomorphism
\begin{equation*}
\KGL_{\ast}^{\tau}(\X)
\cong
\KGL_{\ast}(\X^{\tau})\otimes_{\KGL_{\ast}(\PP^{\infty})}\KGL_{\ast}.
\end{equation*}
\end{theorem}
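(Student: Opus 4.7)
The plan is to show that the Tor-spectral sequence collapses onto the $a=0$ edge line, whereupon the edge homomorphism becomes the asserted isomorphism. This amounts to verifying
\begin{equation*}
\Tor^{\KGL_{\ast}(\PP^{\infty})}_{a}(\KGL_{\ast}(\X^{\tau}),\KGL_{\ast}) = 0 \quad \text{for all } a \geq 1.
\end{equation*}

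The main step is to exhibit $\Sigma^{\infty}\X^{\tau}_{+}$ as a cell complex in the category of $\Sigma^{\infty}\PP^{\infty}_{+}$-modules, with cells of the form $\Sigma^{p,q}\Sigma^{\infty}\PP^{\infty}_{+}$. Such a structure should arise from a motivic cellular presentation of the base $\X$ pulled back along the principal $\BG$-bundle $\X^{\tau}\rto\X$: since $\BG$-bundles trivialize over $\mathbf{A}^1$-contractible cells, the piece of $\X^{\tau}$ sitting over each cell is, up to motivic suspension, a trivial $\BG$-bundle, contributing a free $\Sigma^{\infty}\PP^{\infty}_{+}$-cell. Smashing with $\KGL$ and passing to homotopy, the associated graded of the induced filtration on $\KGL_{\ast}(\X^{\tau})$ is a sum of free $\KGL_{\ast}(\PP^{\infty})$-modules of the form $\Sigma^{p,q}\KGL_{\ast}(\PP^{\infty})$. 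Free modules have vanishing higher Tor against any module, in particular against $\KGL_{\ast}$; the long exact Tor sequences attached to the short exact sequences of the cell filtration, combined with compatibility of Tor with filtered colimits, propagate this vanishing from the associated graded up to all of $\KGL_{\ast}(\X^{\tau})$.

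The principal obstacle will be producing the $\Sigma^{\infty}\PP^{\infty}_{+}$-cellular structure on $\Sigma^{\infty}\X^{\tau}_{+}$: one must lift a motivic cellular presentation of $\X$ compatibly with the $\BG$-bundle structure and verify that the subquotients carry the undeformed free $\Sigma^{\infty}\PP^{\infty}_{+}$-module structure (so that the twist $\tau$ contributes only through the attaching maps rather than through the cells themselves). This step will likely require invoking the strict model of $\Sigma^{\infty}\PP^{\infty}_{+}$-modules that was used to define $\KGL^{\tau}$, as well as the cellularity results imported from \cite{Dugger-Isaksen}. Once the freeness of the associated graded is in place, the spectral sequence collapses onto the $a=0$ line, identifying the edge homomorphism with the natural comparison map $\KGL_{\ast}(\X^{\tau})\otimes_{\KGL_{\ast}(\PP^{\infty})}\KGL_{\ast}\rto\KGL^{\tau}_{\ast}(\X)$, which is then an isomorphism, as claimed.
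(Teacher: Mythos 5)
Your proposal misses the actual mechanism and would prove something false if it worked. You claim a $\Sigma^{\infty}\PP^{\infty}_{+}$-cellular filtration on $\Sigma^{\infty}\X^{\tau}_{+}$ with free cells induces a filtration on $\KGL_{\ast}(\X^{\tau})$ whose associated graded is a sum of free $\KGL_{\ast}(\PP^{\infty})$-modules, and then propagate Tor-vanishing up the filtration. But the cofiber sequences
\begin{equation*}
E_{n}\rto E_{n+1}\rto \bigvee \Sigma^{p,q}\Sigma^{\infty}\PP^{\infty}_{+}
\end{equation*}
give \emph{long} exact sequences in $\KGL$-homology, not split short exact sequences; the connecting maps encode the twist $\tau$ and the attaching data and are nonzero in general. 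Without a splitting, there is no well-defined free associated graded. Moreover, if your argument did go through, it would show that $\KGL_{\ast}(\X^{\tau})$ is always a flat $\KGL_{\ast}(\PP^{\infty})$-module (filtered colimit of iterated extensions of frees), and hence that the Tor-spectral sequence collapses \emph{for any} $\Sigma^{\infty}\PP^{\infty}_{+}$-module spectrum against any coefficient module. That is absurd — the paper points out via Lemma \ref{lemma:betamapsto} that even $\KGL_{\ast}$ itself is not flat over $\KGL_{\ast}(\PP^{\infty})$, so the Tor spectral sequence does have genuine higher terms in general.

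The actual proof uses a completely different, comodule-theoretic input. One observes that $\KGL_{\ast}(\X^{\tau})$ is a comodule over the flat graded Adams Hopf algebroid $(\KGL_{\ast}(\PP^{\infty}),\KGL_{\ast}\KGL\otimes_{\KGL_{\ast}}\KGL_{\ast}(\PP^{\infty}))$, and one shows (Theorem \ref{theorem:landweberexactness}) that the composite $\KGL_{\ast}(\PP^{\infty})\rto\KGL_{\ast}\KGL\rto\KGL_{\ast}$ is Landweber exact for this Hopf algebroid. Landweber exactness says the functor $-\otimes_{\KGL_{\ast}(\PP^{\infty})}\KGL_{\ast}$ is exact on \emph{comodules} — a much weaker hypothesis than flatness, but sufficient. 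The vanishing of $\Tor^{\KGL_{\ast}(\PP^{\infty})}_{a}(E,\KGL_{\ast})$ in positive degrees for any comodule $E$ is then extracted via Hovey's projective model structure on unbounded chain complexes of comodules: a cofibrant replacement $\QQQ E$ is cofibrant already as a complex of $\KGL_{\ast}(\PP^{\infty})$-modules, so it computes Tor, while the Adams property guarantees that projective weak equivalences of comodule complexes are quasi-isomorphisms and Landweber exactness identifies $\QQQ E\otimes\KGL_{\ast}\simeq E\otimes\KGL_{\ast}$. The flatness that actually enters is flatness of the left unit $\KGL_{\ast}(\PP^{\infty})\rto\KGL_{\ast}\KGL$ (a localization, Lemma \ref{lemma:flatnesslocalization}), not any flatness of $\KGL_{\ast}(\X^{\tau})$ itself. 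Your proposal does not touch the comodule structure or Landweber exactness, which are the essential ideas.
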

This is the motivic analogue of the corresponding topological result shown in \cite{Khorami}.
Theorem \ref{theorem:edgemapisomorphism} follows from (\ref{equation:torss}) by proving the Tor-group $\Tor^{\KGL_{\ast}(\PP^{\infty})}_{a,(b,c)}(\KGL_{\ast},\KGL_{\ast}(\X^{\tau}))$ is trivial for $a>0$.
It is worthwhile to point out that $\KGL_{\ast}$ is not a flat $\KGL_{\ast}(\PP^{\infty})$-module, 
i.e.~the vanishing result for the Tor-groups does not hold for an ``obvious''  reason.
Our proof of the vanishing employs flatness of the ring map $\KGL_{\ast}(\PP^{\infty})\rto\KGL_{\ast}\KGL$ and the homotopy theory of Hopf algebroids.
More precisely,
it is shown that the composite map $\KGL_{\ast}(\PP^{\infty})\rto\KGL_{\ast}\KGL\rto\KGL_{\ast}$ satisfies the Landweber exactness criterion relative to the Hopf algebroid  
$(\KGL_{\ast}(\PP^{\infty}),\KGL_{\ast}\KGL\otimes_{\KGL_{\ast}}\KGL_{\ast}(\PP^{\infty}))$. 
Furthermore, 
$\KGL_{\ast}(\X^{\tau})$ is a comodule over the same Hopf algebroid, 
and the Tor-groups are computed by a cofibrant replacement in the projective model structure on the category of unbounded chain complexes of such comodules.  
A crux input is that the Hopf algebroid in question is an ``Adams Hopf algebroid.''
This notion is recalled in Section \ref{section:hopfalgebroids} together with some background from stable homotopy theory.
Combining these facts we show the desired vanishing of the Tor-groups in positive degrees.
Our use of the model structure circumvents a more explicit construction employed in the topological situation \cite{Khorami}.

Algebraic K-theory is closely related to motivic cohomology and more classically to higher Chow groups via Chern characters.
We shall briefly examine a Chern character for motivic twisted K-theory with target twisted periodized rational motivic cohomology 
\begin{equation*}
\Ch^{\tau}
\colon
\KGL^{\tau}
\rto
\PM^{\tau}\Q.
\end{equation*}
The construction of $\Ch^{\tau}$ follows the setup for the Chern character for $\KGL$ in \cite{nmp-exactness}.
As it turns out, 
the rationalization of $\Ch^{\tau}$ is an isomorphism under a mild assumption on the base scheme originating in the work of Cisinski-D{\'e}glise
\cite{Cisinski-Deglise}.
(We leave the formulation of the corresponding result for $\KGL_{\tau}$ to the main body of the paper.)
\begin{theorem}
\label{theorem:chernisomorphism}
For geometrically unibranched excellent base schemes the rational Chern character
\begin{equation*}
\Ch^{\tau}_{\Q}
\colon
\KGL^{\tau}_{\Q}
\rto
\PM^{\tau}\Q.
\end{equation*}
is an isomorphism in the homotopy category of modules over $\Sigma^{\infty}\PP^{\infty}_{+}$.
\end{theorem}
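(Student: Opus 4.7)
The strategy is to deduce the twisted statement from the untwisted rational Chern character isomorphism of Cisinski--D\'{e}glise by derived base change along $\Sigma^{\infty}\X^{\tau}_{+}$ over $\Sigma^{\infty}\PP^{\infty}_{+}$. The plan rests on realising $\Ch^{\tau}_{\Q}$ as the image, under the derived functor $\Sigma^{\infty}\X^{\tau}_{+}\wedge^{\bL}_{\Sigma^{\infty}\PP^{\infty}_{+}}(-)$, of an already known equivalence between $\KGL_{\Q}$ and $\PM\Q$.

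First I would establish that both sides of the desired isomorphism are of the form $\Sigma^{\infty}\X^{\tau}_{+}\wedge^{\bL}_{\Sigma^{\infty}\PP^{\infty}_{+}}M$ and that $\Ch^{\tau}$ is obtained by base changing an untwisted $\Sigma^{\infty}\PP^{\infty}_{+}$-module map $\Ch\colon\KGL\to\PM\Q$. For the source this is the definition $\KGL^{\tau}=\Sigma^{\infty}\X^{\tau}_{+}\wedge_{\Sigma^{\infty}\PP^{\infty}_{+}}\KGL$, with $\KGL$ carrying its $\Sigma^{\infty}\PP^{\infty}_{+}$-module structure coming from the Bott tower \eqref{equation:Botttower}. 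For the target, an analogous construction (as in \cite{nmp-exactness}, pulling back along $\tau$ the universal periodised rational motivic cohomology) gives $\PM^{\tau}\Q\simeq\Sigma^{\infty}\X^{\tau}_{+}\wedge^{\bL}_{\Sigma^{\infty}\PP^{\infty}_{+}}\PM\Q$, where the module structure on $\PM\Q$ arises from the first Chern class on $\PP^{\infty}$ in rational motivic cohomology. The Chern character $\Ch$ of \cite{nmp-exactness} is by design compatible with these two module structures, so $\Ch^{\tau}$ is canonically identified with the base change of $\Ch$.

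Next I would rationalise and invoke the Cisinski--D\'{e}glise theorem \cite{Cisinski-Deglise}: over a geometrically unibranched excellent base scheme the untwisted rational Chern character $\Ch_{\Q}\colon\KGL_{\Q}\to\PM\Q$ is an isomorphism in the motivic stable homotopy category, and because the module structures agree on both sides this upgrades to an isomorphism in the homotopy category of $\Sigma^{\infty}\PP^{\infty}_{+}$-modules. Since rationalisation commutes with the derived smash product (using cellularity and the fact that $\Sigma^{\infty}\PP^{\infty}_{+}$ is a filtered colimit of cellular objects as noted before Theorem \ref{theorem:edgemapisomorphism}) and since $\Sigma^{\infty}\X^{\tau}_{+}\wedge^{\bL}_{\Sigma^{\infty}\PP^{\infty}_{+}}(-)$ preserves weak equivalences, applying it to $\Ch_{\Q}$ yields the desired isomorphism $\KGL^{\tau}_{\Q}\to\PM^{\tau}\Q$ of $\Sigma^{\infty}\PP^{\infty}_{+}$-modules.

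The main technical obstacle lies in the first paragraph: one must arrange $\Ch$ as a genuine $\Sigma^{\infty}\PP^{\infty}_{+}$-module map at the point-set level, so that base change is meaningful, and verify that the base changed map really agrees with the directly constructed $\Ch^{\tau}$. This requires the strict commutative ring model of $\KGL$ from \cite{Rondigs-Spitzweck-Ostvar}, a sufficiently rigid model of $\PM\Q$ as a $\Sigma^{\infty}\PP^{\infty}_{+}$-algebra, and a careful compatibility check between the Bott element in $\KGL$-theory of $\PP^{\infty}$ and its image under $\Ch$ in rational motivic cohomology of $\PP^{\infty}$. Once this rigidification is in place the remaining steps are formal manipulations with derived smash products and rationalisation.
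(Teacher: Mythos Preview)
Your proposal is correct and follows essentially the same route as the paper: define $\Ch^{\tau}$ as $\Sigma^{\infty}\X^{\tau}_{+}\wedge_{\Sigma^{\infty}\PP^{\infty}_{+}}(-)$ applied to the untwisted Chern character, then cite the Cisinski--D\'{e}glise result (together with \cite{nmp-exactness}) that $\Ch_{\Q}$ is an isomorphism. The one point worth noting is that the ``main technical obstacle'' you flag---making $\Ch$ a genuine $\Sigma^{\infty}\PP^{\infty}_{+}$-module map---is dispatched in the paper by an $E_{\infty}$-isomorphism $\PM\Q\cong\KGL\wedge\MQ$ (Lemma \ref{lemma:PMQisKGLMQ}); the $\Sigma^{\infty}\PP^{\infty}_{+}$-algebra structure on $\PM\Q$ is then \emph{defined} as the composite $\Sigma^{\infty}\PP^{\infty}_{+}\to\KGL\to\KGL\wedge\MQ\cong\PM\Q$, so $\Ch$ is automatically a map of $\Sigma^{\infty}\PP^{\infty}_{+}$-algebras and no separate compatibility check with first Chern classes is required.
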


Section \ref{section:proofsofthemainresults} provides streamlined proofs of the results reviewed in the above.
In the same section we work out explicit computations of the motivic twisted K-groups of the motivic $(3,1)$-sphere.
Over finite fields the motivic twisted K-groups in positive degrees $2k-1$ and $2k$ turn out to be finite cyclic groups of the same order.
This amusing computation is closely related to Quillen's computation of the K-groups of finite fields. 
Some of the basic facts concerning flat Adams Hopf algebroids required in Section \ref{section:proofsofthemainresults} are deferred to Section \ref{section:hopfalgebroids}.

In Section \ref{section:spectralsequencesformotivictwisted K-theory} we establish powerful integral relations between motivic (co)homology and motivic twisted K-theory 
in the form of spectral sequences
\begin{equation*}
\MZ_{\ast}(\Sigma^\infty \X_{+} )
\Longrightarrow
\KGL^{\tau}_{\ast}(\X)
\end{equation*}
and
\begin{equation*}
\MZ^{\ast}(\Sigma^\infty \X_{+} )
\Longrightarrow
\KGL_{\tau}^{\ast}(\X).
\end{equation*}
For some closely related papers on spectral sequences computing (non-twisted) K-groups in terms of motivic cohomology groups we refer the reader to \cite{BL}, \cite{FS}, 
\cite{Grayson}, \cite{Levine-slices}, \cite{Suslin} and \cite{Voevodskymssktheory}.
The question of strong convergence of these spectral sequences is a tricky problem.
Our approach involves the very effective motivic stable homotopy category $\SH(S)^{\Veff}$. 
We define it as the smallest full subcategory of the motivic stable homotopy category $\SH(S)$ that contains suspension spectra of smooth schemes of finite type over $S$ 
and is closed under extensions and homotopy colimits.
This is not a triangulated category;
however,
it is a subcategory of the effective motivic stable homotopy category $\SH(S)^{\eff}$.
(In fact, it is the homologically positive part of a $t$-structure on $\SH(S)^{\eff}$.)
The very effective motivic stable homotopy category is of independent interest.
We show that the algebraic cobordism spectrum $\MGL$ lies in $\SH(S)^{\Veff}$.
When $S$ is a field of characteristic zero, 
we show that the connective K-theory spectrum $\kgl$ lies in $\SH(S)^{\Veff}$.
This is a key input for showing strong convergence of the spectral sequences.

The main body of the paper ends in Section \ref{section:furtherquestionsandproblems} with a discussion of open problems. 
In particular, 
we suggest extending the techniques in this paper to the settings of both equivariant K-theory and hermitian K-theory.

For legibility, bigraded motivic homology theories are written with a single grading.
The precise meaning of the gradings should always be clear from the context.

\section{Main definitions and first properties}
\label{section:groupactionsandquotientspaces}
In this section we first put the definitions of the motivic twisted $K$-theory spectra on rigorous grounds.
This part deals with model structures and classifying spaces.
The constructions are rigged so that smashing with the sphere spectrum over $\Sigma^\infty \PP^\infty_+$ yields a useful ``untwisting'' result detailed in Lemma \ref{lemma:P00topoint}.
For algebro-geometric reasons we shall explain below, some of the results require mild restrictions on the base scheme.

Let $\Spc$ be the category of motivic spaces on $\Sm$,
i.e.~simplicial presheaves on the Nisnevich site of smooth schemes of finite type over $S$, 
with the injective motivic model structure.
This model structure satisfies the monoid axiom \cite{schwede-shipley}.
Hence for any monoid $G$ in $\Spc$, 
the category $\Mod(G)$ of $G$-modules acquires a model structure.
For a map $G \to H$ of monoids there is an induced left Quillen functor $\Mod(G) \to \Mod(H)$.
In particular, 
the pushforward of a $G$-module $\X$ along the canonical map $G \to \ast$ is the quotient $\X/G$.
The homotopy quotient is defined similarly by first taking a cofibrant replacement of $\X$ in $\Mod(G)$.

We denote by $\Mod_{\Y}(G)$ the category of $G$-modules in the slice category $\Spc/\Y$ comprised of motivic spaces over a motivic space $\Y$.
It should be noted that $\Mod_{\Y}(G)$ is a model category:
To begin with, 
$\Spc/\Y$ inherits an evident model structure from the motivic model structure on $\Spc$ which turns the pairing 
$\Spc\times\Spc/\Y\rto\Spc/\Y$ sending $(\X,\X^{\prime}\rto\Y)$ to $(\X\times\X^{\prime})\rto\X^{\prime}\rto\Y$ into a Quillen bifunctor.
Moreover, 
every object of $\Spc$ is cofibrant. 
Thus the existence of the model structure on $\Mod_{\Y}(G)$ follows from a relative version of \cite[Theorem 3.1.1]{schwede-shipley}.
For a $G$-module $\X$ and a map of motivic spaces $\Y' \to \Y$, 
note that $\X \in \Mod_{\X/G}(G)$ and there is a pullback functor $\Mod_\Y(G) \to \Mod_{\Y'}(G)$.

In what follows we specialize to the commutative monoid $\BG\simeq\PP^{\infty}$.
As a model for the classifying space $\B \PP^{\infty}$ of $\PP^{\infty}$ we may use the standard bar construction.
Viewing $\ast$ as a $\PP^{\infty}$-module we are entitled to a cofibrant replacement $\QQQ \to \ast$ in $\Mod(\PP^{\infty})$. 
The homotopy quotient $\QQQ/\PP^{\infty}$ gives an alternative model for the classifying space of $\PP^{\infty}$.
In the proof of Lemma \ref{lemma:P00topoint} we find it convenient to use the latter. 

If $\tau \colon \X \to \QQQ/\PP^{\infty}$ is a fibration in $\Spc$,
then the homotopy pullback
\begin{equation*}
\X^\tau\equiv \X \times_{\QQQ/\PP^{\infty}} \QQQ \in \Mod_\X(\PP^{\infty})
\end{equation*}
is a $\PP^{\infty}$-module over $\X$,
in particular a $\PP^{\infty}$-module. 
Suppose $\tau\colon \X \to (\QQQ/\PP^{\infty})^{\fff}$ is a map in $\Spc$ with target some fibrant replacement of $\QQQ/\PP^{\infty}$. 
The model structure ensures there exists a functorial fibrant replacement $(\QQQ)^{\fff}$ of $\QQQ$ in $\Mod_{(\QQQ/\PP^{\infty})^{\fff}}(\PP^{\infty})$.
Using these fibrant replacements we define $\X^\tau$ by the homotopy pullback
\begin{equation*}
\X^\tau\equiv \X \times_{(\QQQ/\PP^{\infty})^{\fff}} (\QQQ)^{\fff} \in \Mod_\X(\PP^{\infty}).
\end{equation*}
Working in motivic symmetric spectra we note that $\Sigma^{\infty}\X^{\tau}_{+}$ is a strict $\Sigma^{\infty}\PP^{\infty}_{+}$-module.
(Recall that $\Sigma^{\infty}$ is a left Quillen functor for the injective motivic model structure \cite{Jardine}.)

The motivic twisted homological K-theory of $\tau\colon \X \to (\QQQ/\PP^{\infty})^{\fff}$ in $\Spc$ is defined as the derived smash product
\begin{equation*}
\KGL^{\tau}
\equiv
\Sigma^{\infty}\X^{\tau}_{+}
\wedge_{\Sigma^{\infty}\PP^\infty_{+}}
\KGL.
\end{equation*}
For the strict module structure on $\KGL$ we use the Bott inverted model discussed in \cite{Rondigs-Spitzweck-Ostvar}.
Likewise,
by running the same fibrant replacements, 
the motivic twisted cohomological K-theory of $\tau\colon \X \to (\QQQ/\PP^{\infty})^{\fff}$ is defined as the derived internal hom
\begin{equation*}
\KGL_{\tau}
\equiv
\uHom_{\Sigma^\infty \PP^\infty_+}(\Sigma^\infty \X^\tau_+,\KGL).
\end{equation*}

In the following we let $\B\BG$ denote the homotopy quotient $(\QQQ/\PP^{\infty})^{\fff}$.

\begin{proposition} \label{proposition:tauzero}
Suppose $S$ is a regular scheme and $\tau\colon\X\rto\B\BG$.
If $\tau$ is null then $\KGL^{\tau}$ is isomorphic to $\Sigma^{\infty}\X_{+}\wedge\KGL$ in the motivic stable homotopy category.
\end{proposition}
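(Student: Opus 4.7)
The plan is to trivialize $\X^{\tau}$ as a $\PP^{\infty}$-module over $\X$ and then exploit the resulting free module structure after passing to motivic spectra. First I would use that $\tau$ being null means it factors up to motivic homotopy through the basepoint $\ast\rto\B\BG$. Because homotopy pullback is invariant under homotopy of the map being pulled back, one obtains
\[
\X^{\tau}\simeq\X\times F,
\]
where $F\equiv\ast\times_{\B\BG}(\QQQ)^{\fff}$ is the homotopy fiber of the universal principal $\BG$-bundle $\ast\rto\B\BG$. The space $F$ inherits the canonical principal $\PP^{\infty}$-action from $(\QQQ)^{\fff}$, and the above equivalence holds in $\Mod_{\X}(\PP^{\infty})$ with $\PP^{\infty}$ acting trivially on the $\X$-factor.

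Next I would identify $F$ with $\PP^{\infty}$ as a $\PP^{\infty}$-module. Since $\B\BG$ was modelled by the homotopy quotient $\QQQ/\PP^{\infty}$, the cofiber sequence $\PP^{\infty}\rto\QQQ\rto\QQQ/\PP^{\infty}$ identifies the homotopy fiber of $\QQQ\rto\QQQ/\PP^{\infty}$ with $\PP^{\infty}$ equipped with its translation action on itself. The regularity hypothesis on $S$ enters precisely here, to guarantee the motivic weak equivalence $\BG\simeq\PP^{\infty}$, so that $\PP^{\infty}$ is sufficiently group-like for its bar construction to deloop it. Combining the two equivalences produces a zigzag of weak equivalences $\X^{\tau}\simeq\X\times\PP^{\infty}$ in $\Mod_{\X}(\PP^{\infty})$.

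Passing to motivic suspension spectra converts the product into a smash product and the module structure into the free $\Sigma^{\infty}\PP^{\infty}_{+}$-module structure, so
\[
\Sigma^{\infty}\X^{\tau}_{+}\simeq\Sigma^{\infty}\X_{+}\wedge\Sigma^{\infty}\PP^{\infty}_{+}
\]
as modules over $\Sigma^{\infty}\PP^{\infty}_{+}$. Smashing over $\Sigma^{\infty}\PP^{\infty}_{+}$ with the Bott inverted strict model of $\KGL$ then gives
\[
\KGL^{\tau}=\Sigma^{\infty}\X^{\tau}_{+}\wedge_{\Sigma^{\infty}\PP^{\infty}_{+}}\KGL\simeq\Sigma^{\infty}\X_{+}\wedge\Sigma^{\infty}\PP^{\infty}_{+}\wedge_{\Sigma^{\infty}\PP^{\infty}_{+}}\KGL\simeq\Sigma^{\infty}\X_{+}\wedge\KGL
\]
by the standard free module identity, as required.

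The main obstacle will be rendering the argument rigorous at the model categorical level: the splitting $\X^{\tau}\simeq\X\times\PP^{\infty}$ must be realized as a genuine zigzag of weak equivalences in $\Mod_{\X}(\PP^{\infty})$ that is compatible with the cofibrant replacement $\QQQ\rto\ast$ in $\Mod(\PP^{\infty})$ and the fibrant replacement $(\QQQ)^{\fff}$ in $\Mod_{\B\BG}(\PP^{\infty})$ used to define $\X^{\tau}$. Working throughout in motivic symmetric spectra, where $\Sigma^{\infty}\PP^{\infty}_{+}$ has a strict commutative monoid structure and $\KGL$ is a strict $\Sigma^{\infty}\PP^{\infty}_{+}$-algebra via the Bott tower, allows the derived smash product computation to go through without hidden coherence issues.
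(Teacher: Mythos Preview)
Your overall architecture matches the paper's: first show $\X^{\tau}\simeq\X\times\PP^{\infty}$ as a $\PP^{\infty}$-module, then pass to suspension spectra and cancel the free factor. The final chain of isomorphisms is exactly what the paper writes. The gap is in your justification of the fiber identification, and in particular in your account of where regularity enters.

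You assert that the sequence $\PP^{\infty}\to\QQQ\to\QQQ/\PP^{\infty}$ exhibits $\PP^{\infty}$ as the homotopy fiber, and that regularity is needed ``to guarantee the motivic weak equivalence $\BG\simeq\PP^{\infty}$.'' But $\BG\simeq\PP^{\infty}$ holds over any base; this has nothing to do with regularity. The genuine obstruction is that the motivic homotopy category is \emph{not} an $\infty$-topos, so a principal bundle sequence $G\to\QQQ\to\QQQ/G$ need not remain a fiber sequence after $\A^{1}$-Nisnevich localization, and $\Omega\B\BG\simeq\BG$ is not automatic. Concretely, the homotopy pullback $\ast\to\B\BG\leftarrow\ast$ must be computed after passing to a Nisnevich-local $\A^{1}$-invariant fibrant replacement of $\B\BG$, and there is no formal reason for the answer to be $\PP^{\infty}$.

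The paper handles this by constructing an explicit such fibrant model: the sectionwise classifying space $B^{s}\nu\Pic$ of the simplicial Picard functor. Showing that $B^{s}\nu\Pic$ is already Nisnevich local uses the vanishing $H^{2}_{\Nis}(X,\G)=0$ for all $X\in\Sm$, and \emph{this} is where regularity of $S$ is consumed (via the exact sequence $0\to\caO_{X}^{*}\to\MM_{X}^{*}\to\ZZ_{X}^{1}\to 0$). Only with this concrete model can one compute the homotopy fiber sectionwise and recover $\BG$. Your argument needs to supply this step, or an equivalent one, rather than appealing to a bar-construction delooping principle that fails in the motivic setting.
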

\begin{proof}
Corollary \ref{corollary:homotopypullback} identifies $\X^{\tau}_{+}$ with the smash product of motivic pointed spaces $\X_{+}\wedge\PP^{\infty}_{+}$. 
This follows because of the assumption on $\tau$ the former is obtained by first taking the homotopy pullback of the diagram
\begin{equation*}
\ast\rto\B\BG\lto\ast
\end{equation*}
and second forming the homotopy pullback along the canonical map $\X\rto\ast$.
Using this we obtain the isomorphisms
\begin{equation*}
\Sigma^{\infty}\X^{\tau}_{+}
\wedge_{\Sigma^{\infty}\PP^{\infty}_{+}}
\KGL
\cong
(\Sigma^{\infty}\X_{+}
\wedge
\Sigma^{\infty}\PP^{\infty}_{+})
\wedge_{\Sigma^{\infty}\PP^{\infty}_{+}}
\KGL
\cong
\Sigma^{\infty}\X_{+}\wedge\KGL.
\end{equation*}
The regularity assumption on the base scheme $S$ enters the proof of Corollary \ref{corollary:homotopypullback}, 
which we discuss next.
\end{proof}

Recall from \cite[\S2]{nmp-nonregular} the definition of the simplicial Picard functor $\nu\Pic$ on $\Sm$:
For a scheme $X$ in $\Sm$, 
let $\underline{\Pic}(X)$ denote the associated Picard groupoid.
Then the pseudo-functor $X\mapsto\underline{\Pic}(X)$ can be strictified to a presheaf on $\Sm$.
Applying the nerve functor to any such strictification defines the simplicial presheaf $\nu\Pic$ on $\Sm$.
\begin{lemma}
Suppose $S$ is a normal scheme.
Then the simplicial Picard functor $\nu\Pic$ is a Nisnevich local $\A^{1}$-invariant simplicial presheaf.
\end{lemma}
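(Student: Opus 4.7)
The plan is to reduce both the Nisnevich descent and $\A^{1}$-invariance claims to standard facts about the presheaves $\Pic$ and $\G$, leveraging the fact that $\nu\Pic$ is 2-truncated.

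First I would unpack the homotopy type of $\nu\Pic(X)$. Since the Picard groupoid $\underline{\Pic}(X)$ has line bundles on $X$ as objects and their isomorphisms as morphisms, its nerve is 2-truncated with $\pi_{0}\nu\Pic(X) = \Pic(X)$ and $\pi_{1}(\nu\Pic(X),L) = \mathrm{Aut}(L) = \G(X) = \caO_{X}(X)^{\times}$ for any basepoint $L$, while $\pi_{i}\nu\Pic(X) = 0$ for $i \geq 2$. Thus, up to pointwise weak equivalence, $\nu\Pic$ is controlled by the two presheaves $\Pic$ and $\G$ together with the Picard stack structure.

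Nisnevich locality would follow from the Brown--Gersten descent criterion of Morel--Voevodsky: I would verify that for every elementary Nisnevich distinguished square in $\Sm$, applying $\nu\Pic$ produces a homotopy cartesian square of simplicial sets. Because $\nu\Pic$ is valued in nerves of groupoids, this reduces to checking that the induced square of Picard groupoids is a 2-pullback, i.e., that a line bundle on $X$ is recovered uniquely up to isomorphism by a line bundle on $U$, a line bundle on $V$, and a compatible identification on $U\times_{X}V$. This is precisely Nisnevich descent for line bundles, which holds because line bundles are locally free rank one sheaves and $\G$ itself satisfies fpqc, hence Nisnevich, descent.

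For $\A^{1}$-invariance I would check that $\nu\Pic(X)\rto\nu\Pic(X\times\A^{1})$ induces isomorphisms on homotopy groups for every smooth $S$-scheme $X$. On $\pi_{1}$ the map is $\G(X)\rto\G(X\times\A^{1})$, an isomorphism since the units of $R[t]$ agree with the units of $R$ whenever $R$ is reduced. On $\pi_{0}$ the map is $\Pic(X)\rto\Pic(X\times\A^{1})$; since $S$ is normal and $X$ is smooth over $S$, $X$ is regular and hence normal, so this is the classical theorem that the Picard group of a normal noetherian scheme is $\A^{1}$-invariant. The main obstacle is the descent step: translating the Brown--Gersten criterion for the simplicial presheaf $\nu\Pic$ into the 2-stack condition for line bundles requires careful bookkeeping with the strictification, but once that translation is made both conclusions follow from well-known sheaf-theoretic statements.
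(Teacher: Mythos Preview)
Your approach coincides with the paper's: Nisnevich locality via descent for the Picard stack (the paper simply invokes flat descent for the groupoid-valued $\Pic$), and $\A^{1}$-invariance via the known $\A^{1}$-invariance of $\Pic$ and $\G$ on schemes smooth over a normal base. Your version is just a more explicit unpacking of the same two facts.

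One slip to correct: the implication ``$S$ normal and $X$ smooth over $S$ $\Rightarrow$ $X$ regular'' is false (take $X=S$ for any non-regular normal $S$). What is true, and what you actually use, is that smooth over normal implies normal: a smooth morphism is flat with regular fibers, and flat over a normal base with normal fibers yields a normal total space. Normality of $X$ is all you need for both homotopy groups: on $\pi_{0}$ you invoke $\A^{1}$-invariance of $\Pic$ for normal noetherian schemes (Traverso--Swan, via seminormality), and on $\pi_{1}$ normal implies reduced, so $\G(X)\to\G(X\times\A^{1})$ is an isomorphism. With ``regular'' replaced by ``normal'' the argument is complete.
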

\begin{proof}
Nisnevich localness holds because the groupoid valued Picard functor $\Pic$ satisfies flat descent.
And $\A^{1}$-invariance holds because $\Pic$ is $\A^{1}$-invariant by assumption.
\end{proof}

We remark that $\nu\Pic$ is a commutative monoid in $\Spc$ by strictification.

\begin{lemma}
\label{lemma:sectionwisevpic}
Suppose $S$ is a regular scheme.
Applying the classifying space functor sectionwise to $\nu\Pic$ determines a Nisnevich local $\A^{1}$-invariant simplicial presheaf.
\end{lemma}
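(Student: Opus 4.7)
The plan is to transfer both properties from $\nu\Pic$ to $\B\nu\Pic$ by exploiting that $\nu\Pic$ is sectionwise a grouplike commutative monoid. Grouplikeness is automatic since $\pi_{0}(\nu\Pic(X))=\Pic(X)$ is an honest abelian group, and for such monoids the classifying space functor $\B$ behaves like a shift in connective spectra: it preserves weak equivalences and commutes with homotopy limits.

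First, for $\A^{1}$-invariance, applying $\B$ sectionwise to the weak equivalence $\nu\Pic(X)\rto\nu\Pic(X\times\A^{1})$ furnished by the preceding lemma yields a weak equivalence $\B\nu\Pic(X)\rto\B\nu\Pic(X\times\A^{1})$, since $\B$ preserves weak equivalences between grouplike simplicial monoids. For Nisnevich descent, I would evaluate on a Nisnevich hypercover $U_{\bullet}\rto X$, where the preceding lemma provides the weak equivalence $\nu\Pic(X)\rto\mathrm{holim}\,\nu\Pic(U_{\bullet})$. Applying $\B$ sectionwise and using that $\B$ commutes with homotopy limits of grouplike commutative monoids, this equivalence promotes to $\B\nu\Pic(X)\rto\mathrm{holim}\,\B\nu\Pic(U_{\bullet})$, thereby establishing Nisnevich descent for $\B\nu\Pic$.

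The main obstacle is justifying that $\B$ commutes with homotopy limits of grouplike commutative monoids. This can be derived from a Segal-style delooping argument: the presheaf $\nu\Pic$ represents a connective two-stage spectrum object in simplicial presheaves (the Picard spectrum, with $\pi_{0}=\Pic$ and $\pi_{1}=\G$), its shift by one is realized by $\B\nu\Pic$, and Nisnevich descent together with $\A^{1}$-invariance for a connective spectrum object are preserved under the shift because $\Omega$ is its homotopy inverse and preserves all homotopy limits. Alternatively, one may verify directly that $\B\nu\Pic$ is a sectionwise two-type whose homotopy sheaves $\pi_{1}\cong\Pic$ and $\pi_{2}\cong\G$ are $\A^{1}$-invariant Nisnevich sheaves over the regular base $S$, and then assemble the two properties via the Postnikov tower of $\B\nu\Pic$; the regularity hypothesis on $S$ enters precisely to guarantee $\A^{1}$-invariance of $\Pic$ on smooth schemes over $S$.
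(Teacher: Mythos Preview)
Your argument has a genuine gap: the claim that the classifying space functor $\B$ commutes with homotopy limits of grouplike commutative monoids is false in general, and the obstruction is exactly where the regularity hypothesis enters. Concretely, for a cosimplicial diagram of grouplike $E_\infty$-spaces $Y_\bullet$ with $Y=\mathrm{holim}\,Y_\bullet$, applying $\Omega$ to the comparison map $\B Y\to\mathrm{holim}\,\B Y_\bullet$ does give an equivalence, since $\Omega$ preserves homotopy limits and $\Omega\B\simeq\mathrm{id}$ on grouplike objects. But $\B\Omega Z\simeq Z$ only when $Z$ is connected, so your argument shows merely that the comparison map is an equivalence onto the basepoint component of the target. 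The source $\B Y$ is always connected; the target $\mathrm{holim}\,\B Y_\bullet$ need not be. In the case at hand, $\pi_0$ of the homotopy limit over a Nisnevich hypercover is governed by $H^2_{\Nis}(X,\G)$, and this group is not trivially zero.

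The paper's proof confronts this head-on: it shows that for $S$ regular one has $H^2_{\Nis}(X,\G)=0$ for every $X\in\Sm$, using the exact sequence $0\to\caO_X^*\to\MM_X^*\to\ZZ_X^1\to 0$ and flasqueness of the last two sheaves. This vanishing implies that the Nisnevich-local replacement map $B^s\nu\Pic\to R B^s\nu\Pic$ is already a sectionwise equivalence, whence $B^s\nu\Pic$ is Nisnevich local and $\A^1$-invariance carries over from $\nu\Pic$. Note in particular that you have misidentified the role of regularity: normality already suffices for $\A^1$-invariance of $\Pic$ (this is the content of the preceding lemma), and the strengthening to regular is needed precisely for the vanishing of $H^2_{\Nis}(-,\G)$. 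Your Postnikov-tower alternative has the same defect: knowing that the homotopy \emph{sheaves} are correct does not by itself imply Nisnevich locality of the \emph{presheaf}; one still has to rule out the higher-cohomology contribution to $\pi_0$ of the sections.
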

\begin{proof}
Since $S$ is regular, 
it is well known that the cohomology group $H^{2}_{\Nis}(X,\G)$ is trivial for every $X$ in $\Sm$. 
For an outline of a proof, 
we note that the sheaf $\MM_X^*$ of meromorphic functions on $X$ and the sheaf $\ZZ_X^1$ of codimension $1$ cycles on $X$ are flasque 
in the Nisnevich topology.
Thus,
using \cite[\S 21.6]{EGAIV}, 
the vanishing of $H^{2}_{\Nis}(X,\G)$ follows from the exact sequence 
\begin{equation}
\label{equation:divisorexactsequence}
0 \to \caO_X^* \to \MM_X^* \to \ZZ_X^1 \to 0.
\end{equation}
Let $B^{s}\nu\Pic$ be the sectionwise classifying space of $\nu\Pic$ and $\varphi \colon B^{s}\nu\Pic \to RB^{s}\nu\Pic$ a Nisnevich local replacement. 
Then $\pi_i((RB^{s}\nu\Pic)(X))=H^{2-i}_{\Nis}(X,\G)$ for $0 \le i \le 2$.
It follows that $\varphi$ is a sectionwise equivalence.
Thus the sectionwise classifying space construction is Nisnevich local (sectionwise equivalent to every Nisnevich local fibrant replacement) and $\A^{1}$-invariance is preserved.
\end{proof}
\begin{corollary}
\label{corollary:homotopypullback}
If $S$ is a regular scheme then the homotopy pullback of the diagram
\begin{equation*}
\ast\rto\B\BG\lto\ast
\end{equation*}
is isomorphic to $\PP^{\infty}$ in the motivic homotopy category.
\end{corollary}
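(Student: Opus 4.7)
The plan is to use the simplicial Picard functor $\nu\Pic$ together with its sectionwise classifying space $B^{s}\nu\Pic$ as an explicit motivic model for the pair $(\BG,\B\BG)$, and to exploit the good local properties granted by Lemma \ref{lemma:sectionwisevpic} in order to reduce the homotopy pullback computation to a sectionwise calculation.

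First I would observe that $\nu\Pic$ is motivically weakly equivalent to $\BG\simeq\PP^{\infty}$. Indeed, $\nu\Pic$ classifies line bundles, and the commutative monoid structure coming from tensor product is group-like because line bundles are invertible under duals. Combined with the standard identification $[X,\PP^{\infty}]=\Pic(X)$ on $\Sm$, this gives $\nu\Pic\simeq\PP^{\infty}$ in the motivic homotopy category.

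Second, I would take $B^{s}\nu\Pic$ as a model for $\B\BG$. By Lemma \ref{lemma:sectionwisevpic}, $B^{s}\nu\Pic$ is Nisnevich local and $\A^{1}$-invariant when $S$ is regular, hence motivically fibrant. Consequently, the homotopy pullback of $\ast\rto B^{s}\nu\Pic\lto\ast$ in the motivic model structure may be computed sectionwise, where it is the ordinary based loop space $\Omega B^{s}\nu\Pic$. Since $\nu\Pic$ is a group-like simplicial monoid in each section, this loop space recovers $\nu\Pic$ sectionwise.

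Chaining the two identifications then yields motivic weak equivalences from the homotopy pullback of $\ast\rto\B\BG\lto\ast$ through $\Omega B^{s}\nu\Pic\simeq\nu\Pic\simeq\PP^{\infty}$, as required. The main obstacle is the comparison $\B\BG\simeq B^{s}\nu\Pic$: one must match the bar construction model $(\QQQ/\PP^{\infty})^{\fff}$ with the sectionwise delooping of $\nu\Pic$. This is precisely where the regularity hypothesis on $S$ is essential, since via Lemma \ref{lemma:sectionwisevpic} it ensures that $B^{s}\nu\Pic$ is already motivically fibrant and hence can be directly compared with any other delooping of $\PP^{\infty}$ by uniqueness of deloopings for group-like commutative monoids in $\Spc$.
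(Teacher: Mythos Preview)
Your proposal is correct and follows essentially the same route as the paper: use Lemma~\ref{lemma:sectionwisevpic} to replace $\B\BG$ by the sectionwise classifying space $B^{s}\nu\Pic$, then compute the homotopy pullback sectionwise to recover $\nu\Pic\simeq\BG\simeq\PP^{\infty}$. The only difference is packaging: where you argue that motivic fibrancy of $B^{s}\nu\Pic$ allows the pullback to be computed sectionwise, the paper phrases this as the inclusion $\HH(S)\to\HH^{s}(S)$ arising from a right Quillen functor (hence preserving homotopy limits), which is a slightly cleaner way to say the same thing and sidesteps the need to discuss fibrations between fibrant objects in a localization.
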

\begin{proof}
Let $\HH^s(S)$ denote the homotopy category of simplicial presheaves on $\Sm$ with the objectwise model structure, 
and let $\HH(S)$ denote the motivic homotopy category.
Then the inclusion $\HH(S) \to \HH^s(S)$ arises from a right Quillen functor.
Therefore, 
in order to compute the homotopy pullback of $\ast\rto\B\BG\lto\ast$ in the motivic homotopy category, 
it is sufficient to compute the homotopy pullback of its image in $\HH^s(S)$. 

By Lemma \ref{lemma:sectionwisevpic}, 
$B^{s}\nu\Pic$ is an $\A^1$- and Nisnevich local replacement of $\B\BG$ (for notation see the proof of Lemma \ref{lemma:sectionwisevpic}).
The homotopy pullback of $\ast\rto B^{s}\nu\Pic\lto\ast$ in $\HH^s(S)$ is clearly $\BG$.
\end{proof}

\begin{remark}
The previous corollary would have been trivially true provided the infinity category of motivic spaces had been an infinity topos in the sense of \cite{Lurie}.
Alas, this is not true for motivic spaces:
Recall that in any infinity topos the loop functor provides an equivalence between the connected $1$-truncated pointed objects and discrete group objects. 
In motivic homotopy theory, 
the simplicial loop space of a $1$-truncated pointed motivic space is strongly $\A^{1}$-invariant.
Recall also that $\A^{1}$-invariant and strongly $\A^{1}$-invariant sheaves of groups are different notions.
A discrete motivic group object is synonymous with an $\A^{1}$-invariant sheaf of groups.
This shows that the equivalence does not hold in the motivic setting.
We thank J.~Lurie for this remark. 
It is unclear if Corollary \ref{corollary:homotopypullback} extends to an interesting class of base schemes.
We note that the sequence (\ref{equation:divisorexactsequence}) is exact if and only if $X$ is a locally factorial scheme.
However, 
a smooth scheme of finite type over a locally factorial scheme need not be locally factorial in general.
Thus we cannot expect that the group $H^{2}_{\Nis}(X,\G)$ is trivial over every locally factorial base scheme.
We thank M.~Levine for clarifying this remark.
\end{remark}

\begin{lemma}
\label{lemma:P00topoint}
There is an isomorphism of $\Sigma^\infty \PP^\infty_+$-modules $\Sigma^\infty \X^\tau_+ \wedge_{\Sigma^\infty \PP^\infty_+} \unit \cong \Sigma^\infty \X_+$
where $\Sigma^\infty \PP^\infty_+ \to \unit$ is induced by the canonical map $\PP^\infty \to \ast$.
\end{lemma}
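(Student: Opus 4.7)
The plan is to interpret the derived smash product $(-)\wedge_{\Sigma^\infty\PP^\infty_+}\unit$ as the suspension spectrum of the homotopy quotient $(-)/\PP^\infty$ taken at the level of motivic spaces, and then to verify that the homotopy quotient of $\X^\tau$ by its $\PP^\infty$-action is weakly equivalent to $\X$.

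First I would observe that the augmentation $\Sigma^\infty\PP^\infty_+\rto\unit$ is obtained by applying the strong symmetric monoidal left Quillen functor $\Sigma^\infty(-)_+$ to the terminal map $\PP^\infty\rto\ast$ of monoids. Since $\Sigma^\infty(-)_+$ commutes with colimits and preserves bar constructions, for any $\PP^\infty$-module $M$ whose underlying motivic space is cofibrant there is a natural isomorphism
\begin{equation*}
\Sigma^\infty M_+\wedge_{\Sigma^\infty\PP^\infty_+}\unit\cong\Sigma^\infty(M/\PP^\infty)_+
\end{equation*}
in the homotopy category of $\Sigma^\infty\PP^\infty_+$-modules, where $M/\PP^\infty$ denotes the homotopy quotient computed in $\Mod(\PP^\infty)$.

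Next I would identify $\X^\tau/\PP^\infty$ with $\X$ at the level of motivic spaces. By construction $\QQQ\rto\ast$ is cofibrant in $\Mod(\PP^\infty)$, so the canonical map $\QQQ\rto\QQQ/\PP^\infty$ is a universal (homotopy) principal $\PP^\infty$-bundle. Since the fibrant replacement $\QQQ^{\fff}$ is taken in $\Mod_{(\QQQ/\PP^\infty)^{\fff}}(\PP^\infty)$, it is still a principal $\PP^\infty$-bundle over $(\QQQ/\PP^\infty)^{\fff}$, and the induced map $\QQQ^{\fff}/\PP^\infty\rto(\QQQ/\PP^\infty)^{\fff}$ is a weak equivalence. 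Because $\X^\tau=\X\times_{(\QQQ/\PP^\infty)^{\fff}}\QQQ^{\fff}$ with the $\PP^\infty$-action only on the $\QQQ^{\fff}$-factor, the quotient commutes with this pullback to give
\begin{equation*}
\X^\tau/\PP^\infty\simeq\X\times_{(\QQQ/\PP^\infty)^{\fff}}(\QQQ^{\fff}/\PP^\infty)\simeq\X\times_{(\QQQ/\PP^\infty)^{\fff}}(\QQQ/\PP^\infty)^{\fff}\simeq\X.
\end{equation*}
Combining the two steps yields the required isomorphism of $\Sigma^\infty\PP^\infty_+$-modules.

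The hard part will be justifying that the spacewise homotopy quotient really does compute the derived smash product over $\Sigma^\infty\PP^\infty_+$, together with the commutation of the quotient with the homotopy pullback defining $\X^\tau$. A clean way to handle this is to work with the two-sided bar constructions $B_\bullet(\X^\tau,\PP^\infty,\ast)$ in $\Spc$ and $B_\bullet(\Sigma^\infty\X^\tau_+,\Sigma^\infty\PP^\infty_+,\unit)$ in $\Sigma^\infty\PP^\infty_+$-modules, compare them degreewise using strong monoidality of $\Sigma^\infty(-)_+$, and then pass to the geometric realization using that $\Sigma^\infty(-)_+$ preserves such homotopy colimits. The principal bundle property of $\QQQ^{\fff}\rto(\QQQ/\PP^\infty)^{\fff}$ ensures that the bar construction in spaces genuinely computes the required pullback, which is essential for the final equivalence with $\X$.
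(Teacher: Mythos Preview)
Your proposal is correct and follows essentially the same two-step strategy as the paper: first show that the homotopy quotient $\X^\tau/\PP^\infty$ in motivic spaces is weakly equivalent to $\X$, then apply $\Sigma^\infty(-)_+$ to transport this to the stable world. The only real difference is in packaging: the paper invokes the model structure on $\Mod_\Y(\PP^\infty)$ and argues that pullback along a fibration $\Y'\to\Y$ is a left Quillen functor commuting with pushforward along $\PP^\infty\to\ast$ (hence with homotopy quotients), whereas you spell out the same commutation via bar constructions and the principal bundle property of $\QQQ^{\fff}\to(\QQQ/\PP^\infty)^{\fff}$. Both routes amount to the same underlying fact, and your explicit bar-construction comparison is exactly what the paper's abstract Quillen-functor argument encodes.
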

\begin{proof}
For a fibration $\Y' \to \Y$ in $\Spc$ the pullback functor $\Spc/\Y \to \Spc/\Y'$ is a left Quillen functor for the injective motivic model structure on $\Spc$.
Indeed, 
it has a right adjoint and it preserves monomorphisms and weak equivalences. 
(Recall that $\Spc$ is right proper.)
Thus we obtain a left Quillen functor
$$\Mod_\Y(\PP^\infty) \to \Mod_{\Y'}(\PP^\infty).$$
This functor commutes with pushforward along the canonical map $\PP^\infty \to \ast$ and thus it preserves homotopy quotients by $\PP^\infty$. 
We deduce that the natural map $\QQQ\X^\tau/\PP^\infty \to \X$ is a weak equivalence, 
where $\QQQ\X^\tau \to \X^\tau$ is a cofibrant replacement in $\Mod_{\X}(\PP^\infty)$.
On the other hand, 
the forgetful functor $\Mod_{\X}(\PP^\infty) \to \Mod(\PP^\infty)$ is also a left Quillen functor.
Combining the above findings shows there is an isomorphism
$$\X^\tau \times^\bL_{\PP^\infty} \ast \cong \X.$$
Applying the motivic symmetric suspension spectrum functor yields the result.
\end{proof}

Next we consider some basic functorial properties of motivic twisted K-theory.
First we note there exits a functor
\begin{equation*}
\KGL^{-}
\colon
\Ho(\Spc/\B\BG)\rto\SH(S).
\end{equation*}
Note here that for a map from $\tau\colon\X\rto\B\BG$ to $\tau^{\prime}\colon\X^{\prime}\rto\B\BG$ there is an induced map between pullbacks 
$\X^{\tau}\rto (\X^{\prime})^{\tau^{\prime}}$ of $\PP^{\infty}$-modules, 
which induces a map of motivic symmetric spectra $\KGL^{\tau}\rto\KGL^{\tau^{\prime}}$.
Clearly this factors through the homotopy category of motivic spaces over $\B\BG$.
In particular, 
if $\tau$ and $\tau^{\prime}$ are $\A^{1}$-homotopy equivalent maps, 
then $\KGL^{\tau}$ and $\KGL^{\tau^{\prime}}$ are isomorphic.
We also note that $\KGL^{-}$ can be enhanced to a functor from $\Ho(\Spc/\B\BG)$ taking values in the category of highly structured $\KGL$-modules.
Likewise,
in the cohomological setup,
there exists a functor
\begin{equation*}
\KGL_{-}
\colon
\Ho(\Spc/\B\BG)^{\text{op}}\rto\SH(S).
\end{equation*}

Some parts of our discussion of motivic twisted K-theory rely on the notion of a ``motivic $E_{\infty}$-ring spectrum.''
For every operad $\mathcal{O}$ in motivic symmetric spectra,
the category of  $\mathcal{O}$-algebras acquires a combinatorial model structure.
A recent account of this model structure has been written up by Hornbostel in \cite{Hornbostel}.
Here, motivic symmetric spectra are viewed in the stable flat positive model structure.
In particular, 
there exists model structures for the commutative motivic operad and the image of the linear isometry operad in topological spaces.
A motivic $E_{\infty}$-ring spectrum is an algebra over a $\Sigma$-cofibrant replacement of the commutative motivic operad.
Motivic $E_{\infty}$-ring spectra and strict commutative motivic ring spectra are related by a Quillen equivalence \cite{Hornbostel}.
For our purposes we may therefore use these two notions interchangeably.

\section{A K{\"u}nneth isomorphism for homological motivic twisted K-theory}
\label{section:proofsofthemainresults}

An explicit computation furnishes a natural base change isomorphism expressing the $\KGL$-homology of $\PP^{\infty}$ in terms of $\KGL_{\ast}$ and unitary topological K-theory
\begin{equation}
\label{equation:firstisomorphism}
\KGL_{\ast}(\PP^{\infty})
\cong
\KGL_{\ast} \otimes_{\KU_{\ast}}\KU_{\ast}(\CP^{\infty}).
\end{equation}
The multiplicative structure on $\KGL_{\ast}(\PP^{\infty})$ induced from the $H$-space structure on $\PP^{\infty}$ can be read off from this isomorphism by using the ring structure on 
$\KU_{\ast}(\CP^{\infty})$ and the coefficient ring.
We refer to the work of Ravenel and Wilson \cite{Ravenel-Wilson} for a description of the ring structure on $\KU_{\ast}(\CP^{\infty})$ in terms of the multiplicative formal group law.

An application of motivic Landweber exactness \cite[Proposition 9.1]{nmp-nonregular} shows there is a natural base change isomorphism 
\begin{equation}
\label{equation:secondisomorphism}
\KGL_{\ast}\KGL
\cong
\KGL_{\ast} \otimes_{\KU_{\ast}}\KU_{\ast}\KU.
\end{equation}
The multiplicative structure on $\KGL_{\ast}\KGL$ induced from the ring structure on $\KGL$ can be read off from this isomorphism by using the ring structure on $\KU_{\ast}\KU$ 
and the coefficient ring.
For a description of the Hopf algebra $\KU_{\ast}\KU$ we refer to the work of Adams and Harris \cite[Part II, \S13]{Adams}.
\begin{lemma}
\label{lemma:betamapsto}
Under the naturally induced composite map
\begin{equation*}
\KGL_{\ast}(\PP^{\infty})
\rto
\KGL_{\ast}\KGL
\rto
\KGL_{\ast}
\end{equation*}
the generator $\beta_{i}$ maps to $1$ if $i=0,1$ and to $0$ if $i\neq 0,1$.
\end{lemma}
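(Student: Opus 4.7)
The plan is to reduce the calculation to topological $\KU$-theory using the base change isomorphisms (\ref{equation:firstisomorphism}) and (\ref{equation:secondisomorphism}). Both arise from motivic Landweber exactness, and under them the composite in question
\begin{equation*}
\KGL_*(\PP^\infty) \rto \KGL_*\KGL \rto \KGL_*
\end{equation*}
corresponds to the $\KGL_* \otimes_{\KU_*}(-)$ base change of the topological composite
\begin{equation*}
\KU_*(\CP^\infty) \rto \KU_*\KU \rto \KU_*.
\end{equation*}
One may take $\beta_i \in \KGL_*(\PP^\infty)$ to be the image under (\ref{equation:firstisomorphism}) of the standard $\KU_*$-basis element of $\KU_*(\CP^\infty)$ dual to $t^i$, where $t = [L] - 1$ is the canonical generator of $\KU^0(\CP^\infty) = \KU^*[[t]]$.

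It therefore suffices to compute the topological composite. This composite is induced by the $H$-space ring map $\phi \colon \Sigma^\infty \CP^\infty_+ \rto \KU$ sending $L$ to $[L]$, followed by the multiplication $\mu \colon \KU \wedge \KU \rto \KU$, and so is nothing but evaluation (slant product) with the class $[L] \in \KU^0(\CP^\infty)$. Writing $[L] = 1 + t$, the contribution of $1$ is the augmentation $\KU_*(\CP^\infty) \rto \KU_*$ induced by the collapse $\CP^\infty_+ \rto S^0$, which sends $\beta_0$ to $1$ and vanishes on $\beta_i$ for $i \geq 1$. The contribution of $t$, controlled by the duality $\langle t^j, \beta_i \rangle = u^i \delta_{ij}$ with $u \in \KU_2$ the topological Bott class, sends $\beta_1$ to $u$ and vanishes on $\beta_i$ for $i \neq 1$. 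In sum, $\beta_0 \mapsto 1$, $\beta_1 \mapsto u$, and $\beta_i \mapsto 0$ for $i \geq 2$. Since Bott periodicity in $\KGL$ identifies $u$ with the unit of $\KGL_*$, this establishes the claim.

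The main obstacle is bookkeeping rather than substance: one must verify that the motivic maps genuinely correspond to their topological counterparts under the base change isomorphisms. For (\ref{equation:secondisomorphism}) this is a standard output of the motivic Landweber exactness machine of \cite[Proposition 9.1]{nmp-nonregular}. For (\ref{equation:firstisomorphism}) one needs that the canonical ring map $\Sigma^\infty \PP^\infty_+ \rto \KGL$ from the Bott tower (\ref{equation:Botttower}) represents the motivic analogue of $[L]$; equivalently, that the Bott element $\beta \in \KGL_{2,1}(\PP^\infty)$ is carried by (\ref{equation:firstisomorphism}) to the topological Bott class. Once this identification is in place, the duality computation above finishes the argument.
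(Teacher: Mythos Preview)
Your approach is essentially the same as the paper's: both reduce to the topological statement for $\KU_*(\CP^\infty)$ via the base change isomorphisms (\ref{equation:firstisomorphism}) and (\ref{equation:secondisomorphism}). The only difference is that the paper simply cites \cite{Khorami} for the topological computation, whereas you carry it out directly via the Kronecker pairing with $[L]=1+t$; your extra paragraph on compatibility of the motivic and topological maps under base change is likewise implicit in the paper's one-line appeal to those isomorphisms.
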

\begin{proof}
We note that $\KGL_{\ast}(\PP^{\infty})$ is a free $\KGL_{\ast}$-module generated by the elements $\beta_{i}$ for $i\geq 0$.
Thus the claim follows from the analogous result for unitary topological K-theory of $\CP^{\infty}$,
see \cite{Khorami} for example, 
by applying the functor $\KGL_{\ast} \otimes_{\KU_{\ast}}-$ and appealing to the base change isomorphisms (\ref{equation:firstisomorphism}) and (\ref{equation:secondisomorphism}).
\end{proof}

Lemma \ref{lemma:betamapsto} verifies the previous assertion that $\KGL_{\ast}$ is not a flat $\KGL_{\ast}(\PP^{\infty})$-module.
Next we establish a result which is pivotal for our proof of the vanishing of the Tor-groups discussed in Section \ref{section:mainresults}.
\begin{lemma}
\label{lemma:flatnesslocalization}
The naturally induced map $\KGL_{\ast}(\PP^{\infty})\rto\KGL_{\ast}\KGL$ is a flat ring map.
\end{lemma}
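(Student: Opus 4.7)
The plan is to exhibit $\KGL_{\ast}\KGL$ as a localization of $\KGL_{\ast}(\PP^{\infty})$ at the Bott element, from which flatness is immediate.

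The starting point is the Bott tower (\ref{equation:Botttower}): by \cite{Rondigs-Spitzweck-Ostvar} (or the Snaith-type theorems of \cite{Gepner-Snaith}, \cite{Spitzweck-Ostvar}) it presents $\KGL$ as the homotopy colimit
\[
\KGL \simeq \hocolim_n \Sigma^{-2n,-n}\Sigma^{\infty}\PP^{\infty}_{+}
\]
in the category of $\Sigma^{\infty}\PP^{\infty}_{+}$-modules, with transition maps the iterated Bott self-map. Now $\Sigma^{\infty}\PP^{\infty}_{+}$ is a commutative ring spectrum via the $H$-space structure on $\PP^{\infty}$, and by construction the Bott self-map $\beta$ is multiplication by a central element, which we again call $\beta$, of $\pi_{2,1}(\Sigma^{\infty}\PP^{\infty}_{+})$. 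Its image in $\KGL_{2,1}(\PP^{\infty})$ under the ring map $\Sigma^{\infty}\PP^{\infty}_{+}\rto\KGL$ will play the decisive role.

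I would then smash the Bott tower on the right with $\KGL$; since the derived smash product commutes with homotopy colimits one gets
\[
\KGL\wedge\KGL \simeq \hocolim_n \Sigma^{-2n,-n}\bigl(\KGL\wedge\Sigma^{\infty}\PP^{\infty}_{+}\bigr).
\]
Bigraded homotopy groups commute with sequential homotopy colimits of motivic symmetric spectra, so
\[
\KGL_{\ast}\KGL \;\cong\; \colim_n \KGL_{\ast+(2n,n)}(\PP^{\infty}),
\]
with transition maps multiplication by $\beta \in \KGL_{2,1}(\PP^{\infty})$. The ring $\KGL_{\ast}(\PP^{\infty})$ is commutative (by the $H$-space structure on $\PP^{\infty}$) and $\beta$ is central in it, so this colimit is the algebraic localization $\KGL_{\ast}(\PP^{\infty})[\beta^{-1}]$. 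Under this identification the natural ring map $\KGL_{\ast}(\PP^{\infty})\rto\KGL_{\ast}\KGL$ becomes the canonical map into a localization of commutative rings at a central multiplicative set, which is flat.

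The main obstacle is bookkeeping: one must check that the filtered colimit of spectra really delivers a \emph{ring}-theoretic localization, not merely a module-theoretic one, and that the inclusion $\KGL_{\ast}(\PP^{\infty})\to\KGL_{\ast}\KGL$ is indeed the structure map of that localization. Both points rest on the multiplicative refinement of Snaith's presentation used in \cite{Rondigs-Spitzweck-Ostvar}—that the Bott self-map is a $\Sigma^{\infty}\PP^{\infty}_{+}$-module endomorphism given by multiplication by the central element $\beta$, and that the unit $\Sigma^{\infty}\PP^{\infty}_{+}\rto\KGL$ is a map of commutative motivic ring spectra—after which the identification of the colimit with $\KGL_{\ast}(\PP^{\infty})[\beta^{-1}]$ as rings, and flatness of the resulting map, are formal.
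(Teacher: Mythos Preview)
Your proposal is correct and follows exactly the paper's primary argument: the paper simply observes that $\KGL$ is the Bott-inverted suspension spectrum of $\PP^{\infty}_{+}$ (citing \cite{Gepner-Snaith}, \cite{Spitzweck-Ostvar}), so the map in question is a localization and hence flat. You have spelled out in detail the step the paper leaves implicit, namely why passing to $\KGL$-homology exhibits $\KGL_{\ast}\KGL$ as $\KGL_{\ast}(\PP^{\infty})[\beta^{-1}]$; the paper also mentions an alternate proof via the base change isomorphisms (\ref{equation:firstisomorphism}) and (\ref{equation:secondisomorphism}) reducing to the topological case.
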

\begin{proof}
In \cite{Gepner-Snaith} and \cite{Spitzweck-Ostvar} it is shown that $\KGL$ is isomorphic to the Bott inverted motivic suspension spectrum of $\PP^{\infty}_{+}$.
Thus the map in question is a localization.
In particular it is flat.
For an alternate proof, 
combine the base change isomorphisms (\ref{equation:firstisomorphism}) and (\ref{equation:secondisomorphism}) with flatness of the naturally induced map 
$\KU_{\ast}(\CP^{\infty})\rto\KU_{\ast}\KU$.
(This map is a localization according to the topological analogue of our first argument,
which is well known and follows from the motivic result by taking complex points, 
or alternatively by \cite{Khorami}.)
\end{proof}

In Proposition \ref{proposition:Hopfalgebroidstructure} we show that $(\KGL_{\ast}(\PP^{\infty}),\KGL_{\ast}\KGL\otimes_{\KGL_{\ast}}\KGL_{\ast}(\PP^{\infty}))$ has the structure of a 
flat graded Adams Hopf algebroid.
We refer the reader to Section \ref{section:hopfalgebroids} for background on the notions and results appearing in the formulation and proof of the following key result.

\begin{theorem}
\label{theorem:landweberexactness}
The naturally induced composite map
\begin{equation*}
\KGL_{\ast}(\PP^{\infty})
\rto
\KGL_{\ast}\KGL
\rto
\KGL_{\ast}
\end{equation*}
is Landweber exact for the flat graded Adams Hopf algebroid 
\begin{equation*}
(\KGL_{\ast}(\PP^{\infty}),\KGL_{\ast}\KGL\otimes_{\KGL_{\ast}}\KGL_{\ast}(\PP^{\infty})).
\end{equation*}
\end{theorem}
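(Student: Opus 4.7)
The strategy is to reduce to the known topological result via the base change isomorphisms (\ref{equation:firstisomorphism}) and (\ref{equation:secondisomorphism}). Together these exhibit the motivic Hopf algebroid $(\KGL_{\ast}(\PP^{\infty}),\,\KGL_{\ast}\KGL\otimes_{\KGL_{\ast}}\KGL_{\ast}(\PP^{\infty}))$ as extension of scalars along the ring map $\KU_{\ast}\to\KGL_{\ast}$ of the topological Hopf algebroid $(\KU_{\ast}(\CP^{\infty}),\,\KU_{\ast}\KU\otimes_{\KU_{\ast}}\KU_{\ast}(\CP^{\infty}))$, and by Lemma \ref{lemma:betamapsto} the motivic composite $\KGL_{\ast}(\PP^{\infty})\to\KGL_{\ast}$ is identified with the base change of its topological analogue $\KU_{\ast}(\CP^{\infty})\to\KU_{\ast}\KU\to\KU_{\ast}$.

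First I would verify that the Hopf algebroid structure maps (left and right units, comultiplication, counit, antipode) on the motivic side agree with those obtained from the topological ones by tensoring with $\KGL_{\ast}$ over $\KU_{\ast}$. This is a naturality check using the compatibility of the base change isomorphisms with the ring and module structures of $\KGL$ on $\KGL\wedge\PP^{\infty}_{+}$ and on $\KGL\wedge\KGL$. Second, I would invoke the topological Landweber exactness of $\KU_{\ast}(\CP^{\infty})\to\KU_{\ast}$ over the topological Hopf algebroid, which is the input from \cite{Khorami}. Third, I would transfer Landweber exactness across the base change along the flat map $\KU_{\ast}\to\KGL_{\ast}$, using that for any topological comodule $M$ there is a natural isomorphism
\begin{equation*}
(M\otimes_{\KU_{\ast}}\KGL_{\ast})\otimes_{\KGL_{\ast}(\PP^{\infty})}\KGL_{\ast}\cong\bigl(M\otimes_{\KU_{\ast}(\CP^{\infty})}\KU_{\ast}\bigr)\otimes_{\KU_{\ast}}\KGL_{\ast},
\end{equation*}
so vanishing of higher Tor-groups transfers from topology. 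Every motivic comodule is a filtered colimit of dualizable ones by the Adams condition recalled in Section \ref{section:hopfalgebroids}, and dualizable comodules are well controlled under base change of Hopf algebroids, so the transfer applies to all comodules.

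The main obstacle I anticipate is this final transfer. Preservation of Landweber exactness under base change of Hopf algebroids is formal on dualizable comodules, but the general case requires the Adams condition in an essential way: it is precisely what allows reduction of an arbitrary comodule to dualizable ones, so that the topological input can be leveraged for the motivic statement. Working out this reduction, and checking that the Adams condition itself survives the base change along $\KU_{\ast}\to\KGL_{\ast}$, is where the substantive work lies.
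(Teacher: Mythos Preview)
Your instinct to reduce to the topological result via the base change isomorphisms is exactly right, and the paper also leans on Khorami's topological computation. But the paper takes a much shorter route that sidesteps the obstacle you flag in your last paragraph.

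The ingredient you are missing is Lemma~\ref{lemma:landweberexctnessflatness}: a graded ring map $A\to B$ is Landweber exact over a flat graded Hopf algebroid $(A,\Gamma)$ if and only if the single map $\eta_L\colon A\to\Gamma\otimes_A B$ is flat. With $A=\KGL_{\ast}(\PP^{\infty})$, $\Gamma=\KGL_{\ast}\KGL\otimes_{\KGL_{\ast}}\KGL_{\ast}(\PP^{\infty})$ and $B=\KGL_{\ast}$, the target $\Gamma\otimes_A B$ is canonically $\KGL_{\ast}\KGL$, so one only has to check that the left unit $\eta_{\KGL_{\ast}(\PP^{\infty})}\colon\KGL_{\ast}(\PP^{\infty})\to\KGL_{\ast}\KGL$ is flat. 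The base change square together with Khorami's identification of $\eta_{\KU_{\ast}(\CP^{\infty})}$ with the canonical map $\KU_{\ast}(\CP^{\infty})\to\KU_{\ast}\KU$ shows that $\eta_{\KGL_{\ast}(\PP^{\infty})}$ is nothing but the map of Lemma~\ref{lemma:flatnesslocalization}, which is a localization (inverting the Bott element) and hence flat. No argument about arbitrary comodules or the Adams condition is required.

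By contrast, your proposed third step has a genuine gap. The Adams condition asserts only that $\Gamma$ itself is a filtered colimit of dualizable comodules; it does not say that every comodule is such a colimit. Even if it did, you would still need each dualizable \emph{motivic} comodule to arise by base change from a dualizable \emph{topological} comodule in order for your displayed Tor comparison to apply, and the base change functor from topological to motivic comodules has no reason to be essentially surjective, even on dualizables. The flatness criterion of Lemma~\ref{lemma:landweberexctnessflatness} is precisely what lets one avoid this problem: flatness of a ring map is a condition that transparently survives the base change along $\KU_{\ast}\to\KGL_{\ast}$, turning the transfer into a one-line algebra check rather than a delicate statement about comodule categories.
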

\begin{proof}
By Lemma \ref{lemma:landweberexctnessflatness} it suffices to show that the left unit map 
\begin{eqnarray*}
\eta_{\KGL_{\ast}(\PP^{\infty})}
\colon
\KGL_{\ast}(\PP^{\infty})
& \rto &
\KGL_{\ast}\KGL\otimes_{\KGL_{\ast}}\KGL_{\ast}(\PP^{\infty}) \\
& \cong & (\KGL_{\ast}\KGL\otimes_{\KGL_{\ast}}\KGL_{\ast}(\PP^{\infty}))\otimes_{\KGL_{\ast}(\PP^{\infty})}\KGL_{\ast}(\PP^{\infty}) \\
& \rto & (\KGL_{\ast}\KGL\otimes_{\KGL_{\ast}}\KGL_{\ast}(\PP^{\infty}))\otimes_{\KGL_{\ast}(\PP^{\infty})}\KGL_{\ast}
\end{eqnarray*}
for the displayed Hopf algebroid is flat (the target is canonically isomorphic to $\KGL_{\ast}\KGL$).
Remark \ref{remark:hopfalgebroidremark} provides more details on the Hopf algebroid structure.

The left unit map  $\eta_{\KGL_{\ast}(\PP^{\infty})}$ and its topological analogue $\eta_{\KU_{\ast}(\CP^{\infty})}$ determines a commutative diagram where the horizontal maps are the 
base change isomorphisms given in (\ref{equation:firstisomorphism}) and (\ref{equation:secondisomorphism}):
\begin{equation*}
\xymatrix{
\KGL_{\ast}(\PP^{\infty})\ar[d]_{\eta_{\KGL_{\ast}(\PP^{\infty})}} \ar[r]^-{\cong}  & \KGL_{\ast}\otimes_{\KU_{\ast}}\KU_{\ast}(\CP^{\infty}) \ar[d] ^{\KGL_{\ast}\otimes\eta_{\KU_{\ast}(\CP^{\infty})}}\\
\KGL_{\ast}\KGL \ar[r]^-{\cong}  &
\KGL_{\ast}\otimes_{\KU_{\ast}}\KU_{\ast}\KU }
\end{equation*}
Khorami \cite{Khorami} has shown that $\eta_{\KU_{\ast}(\CP^{\infty})}$ coincides with the naturally induced map from $\KU_{\ast}(\CP^{\infty})$ to $\KU_{\ast}\KU$.
By motivic Landweber exactness \cite{nmp-nonregular} we deduce that $\eta_{\KGL_{\ast}(\PP^{\infty})}$ coincides with the naturally induced flat map in Lemma \ref{lemma:flatnesslocalization}.
This finishes the proof.
\end{proof}

By Landweber exactness the functor from comodules over $\KGL_{\ast} \KGL \otimes_{\KGL_{\ast}}\KGL_{\ast}(\PP^{\infty})$ to $\KGL_{\ast}$-algebras is exact
(where $\KGL_{\ast}$ is viewed with its $\KGL_{\ast}(\PP^{\infty})$-algebra structure).
This observation is a crux input in the proof of the next result.
\begin{corollary}
Suppose $E$ is a comodule over $\KGL_{\ast} \KGL \otimes_{\KGL_{\ast}}\KGL_{\ast}(\PP^\infty)$. 
The group 
\begin{equation*}
\Tor^{\KGL_{\ast}(\PP^{\infty})}_{\ast}(E,\KGL_{\ast})
\end{equation*}
is trivial in positive degrees.
\end{corollary}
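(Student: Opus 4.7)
The plan is to compute $\Tor^A_\ast(E, B)$, where $A = \KGL_\ast(\PP^\infty)$ and $B = \KGL_\ast$ is viewed as an $A$-algebra via the map studied in Theorem \ref{theorem:landweberexactness}, by resolving $E$ as a $\Gamma$-comodule and then invoking Landweber exactness to force the higher Tor-groups to vanish.

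First I would take a cofibrant replacement $P_\bullet \to E$ in the projective model structure on non-negative chain complexes of $\Gamma$-comodules, as alluded to in Section \ref{section:mainresults}. Since $(A, \Gamma)$ is a flat Adams Hopf algebroid by Proposition \ref{proposition:Hopfalgebroidstructure}, cofibrant objects in this model structure are built (via extensions and retracts) out of extended comodules of the form $\Gamma \otimes_A M$ with $M$ a (flat) $A$-module. By Lemma \ref{lemma:flatnesslocalization} the left unit $\eta_L \colon A \to \Gamma$ is flat, so each such extended comodule is flat as an $A$-module, and hence every $P_n$ is $A$-flat. Consequently $P_\bullet$ is an $A$-flat resolution of $E$ and
\begin{equation*}
\Tor^A_\ast(E, B) \cong H_\ast(P_\bullet \otimes_A B).
\end{equation*}

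The remaining step is to show that $P_\bullet \otimes_A B \to E \otimes_A B$ is a quasi-isomorphism. Here Landweber exactness enters decisively: by Theorem \ref{theorem:landweberexactness} together with Lemma \ref{lemma:landweberexctnessflatness}, the functor $-\otimes_A B$ from $\Gamma$-comodules to $B$-modules is exact. Decomposing the resolution $P_\bullet \to E$ into short exact sequences of comodules via its syzygies (which automatically remain $\Gamma$-comodules, as kernels in the abelian category of comodules), each such short sequence stays exact after applying $-\otimes_A B$. Splicing these sequences back together shows $P_\bullet \otimes_A B$ is quasi-isomorphic to $E \otimes_A B$ concentrated in degree zero, which yields the desired vanishing.

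The main obstacle I anticipate is verifying that a cofibrant replacement in the projective model structure on chain complexes of comodules simultaneously delivers $A$-flat terms and $\Gamma$-comodule syzygies. Both properties are encoded in the Adams Hopf algebroid formalism combined with flatness of $\eta_L$, but making this clean requires the background recalled in Section \ref{section:hopfalgebroids}, in particular the recognition that extended comodules serve as the generating cofibrations and that $-\otimes_A \Gamma$ preserves flatness over $A$.
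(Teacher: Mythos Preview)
Your approach is the same as the paper's: cofibrantly replace $E$ in Hovey's projective model structure on chain complexes of $(A,\Gamma)$-comodules, check that this replacement computes $\Tor^A_\ast(E,B)$, and then use Landweber exactness (Theorem~\ref{theorem:landweberexactness}) to see that the exact functor $-\otimes_A B$ carries the quasi-isomorphism $\QQQ E\to E$ to a quasi-isomorphism. The paper organizes the argument identically, citing \cite[Theorem 2.1.3, Proposition 1.3.4, Proposition 3.3.1]{Hovey} for the existence of the model structure, for the fact that cofibrant comodule complexes are cofibrant as $A$-module complexes, and for the identification of weak equivalences with quasi-isomorphisms in the Adams case.

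One technical correction: the generating cofibrations in Hovey's model structure are built from \emph{dualizable} comodules, not from extended comodules $\Gamma\otimes_A M$. For an Adams Hopf algebroid the dualizable comodules are finitely generated projective over $A$, so the terms of $\QQQ E$ are $A$-projective for that reason; this is how the paper argues, and it renders the appeal to flatness of $\eta_L$ superfluous. Your citation of Lemma~\ref{lemma:flatnesslocalization} for flatness of $\eta_L\colon A\to\Gamma$ is also slightly off, since that lemma concerns the map $A\to\KGL_\ast\KGL$ rather than $A\to\Gamma$; flatness of the Hopf algebroid $(A,\Gamma)$ is Proposition~\ref{proposition:Hopfalgebroidstructure}. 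These are imprecisions in the justification rather than gaps in the logic, and the overall argument goes through once they are adjusted.
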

\begin{proof}
Proposition \ref{proposition:Hopfalgebroidstructure} implies there exists a projective model structure on the category of non-connective chain complexes of 
$\KGL_{\ast} \KGL \otimes_{\KGL_{\ast}}\KGL_{\ast}(\PP^\infty)$-comodules for the set of dualizable comodules \cite[Theorem 2.1.3]{Hovey}.
The projective model structure is proper, finitely generated, stable symmetric monoidal and satisfies the monoid axiom.
Moreover, 
a map is a cofibration if and only if it is a degreewise split monomorphism with cofibrant cokernel.
The cofibrant objects are retracts of certain sequential cell-complexes described in details in \cite[Theorem 2.1.3]{Hovey}.
These results are easily transferred to the graded setting.

Due to the existence of the projective model structure we are entitled to a cofibrant replacement $\QQQ E\rto E$ (recall this is a projective weak equivalence and $\QQQ E$ is cofibrant).
Proposition \ref{proposition:Hopfalgebroidstructure} shows that $(\KGL_{\ast}(\PP^{\infty}),\KGL_{\ast}\KGL\otimes_{\KGL_{\ast}}\KGL_{\ast}(\PP^{\infty}))$ is a graded Adams Hopf algebroid.
This additional structure guarantees that every weak equivalence in the projective model structure is a quasi-isomorphism \cite[Proposition 3.3.1]{Hovey}.

We claim that the Tor-groups in question are computed by the homology of the chain complex 
\begin{equation*}
\QQQ E \otimes_{\KGL_{\ast}(\PP^{\infty})}\KGL_{\ast}.
\end{equation*}
The proof proceeds by comparing chain complexes of comodules over the tensor product $\KGL_{\ast} \KGL \otimes_{\KGL_{\ast}}\KGL_{\ast}(\PP^\infty)$ with chain complexes of 
$\KGL_{\ast}(\PP^{\infty})$-modules. 
Indeed, 
by \cite[Proposition 1.3.4]{Hovey}, 
cf.~the proof of \cite[Theorem 2.1.3]{Hovey}, 
the generating cofibrations are of such a form that $\QQQ E$ is even cofibrant as a complex of $\KGL_{\ast}(\PP^{\infty})$-modules for the usual projective model structure.
(We also note that the tensor factor $\KGL_{\ast}$ need not be cofibrantly replaced because the monoid axiom holds in the projective model structure.)

As noted earlier there exists an exact functor from the category of comodules over $\KGL_{\ast} \KGL \otimes_{\KGL_{\ast}}\KGL_{\ast}(\PP^{\infty})$ to $\KGL_{\ast}$-algebras.
We note that any such functor preserves quasi-isomorphisms.
In particular there is a quasi-isomorphism
\begin{equation*}
\QQQ E \otimes_{\KGL_{\ast}(\PP^{\infty})}\KGL_{\ast}
\simeq 
E \otimes_{\KGL_{\ast}(\PP^{\infty})}\KGL_{\ast}.
\end{equation*}
By combining the above we conclude that the Tor-groups vanish in positive degrees.
\end{proof}

By strong convergence of the spectral sequence (\ref{equation:torss}) we are almost ready to conclude the proof of the K{\"u}nneth isomorphism in Theorem \ref{theorem:edgemapisomorphism}.
It only remains to observe that $\KGL_{\ast}(\X^{\tau})$ is a comodule over the Hopf algebroid 
\begin{equation*}
(\KGL_{\ast}(\PP^{\infty}),\KGL_{\ast}\KGL\otimes_{\KGL_{\ast}}\KGL_{\ast}(\PP^{\infty})).
\end{equation*}
To begin with, 
the naturally induced action of $\PP^{\infty}$ on $\X^{\tau}$ yields a map
\begin{equation*}
\KGL_{\ast}(\PP^{\infty}\times\X^{\tau})
\rto
\KGL_{\ast}(\X^{\tau}).
\end{equation*}
Since $\KGL_{\ast}(\PP^{\infty})$ is free over the coefficient ring $\KGL_{\ast}$ there is an isomorphism
\begin{equation*}
\KGL_{\ast}(\PP^{\infty}\times\X^{\tau})
\cong
\KGL_{\ast}(\PP^{\infty})
\otimes_{\KGL_{\ast}}
\KGL_{\ast}(\X^{\tau}).
\end{equation*}
It follows that $\KGL_{\ast}(\X^{\tau})$ is a module over $\KGL_{\ast}(\PP^{\infty})$.
Using the unit map from the motivic sphere spectrum $\unit$ to $\KGL$ we get a map between motivic spectra
\begin{eqnarray*}
\KGL\wedge\Sigma^{\infty}\X^{\tau}_{+}
\cong
\KGL\wedge\unit\wedge\Sigma^{\infty}\X^{\tau}_{+}
\rto
\KGL\wedge\KGL\wedge\Sigma^{\infty}\X^{\tau}_{+}.
\end{eqnarray*}
From this we immediately obtain the desired comodule map
\begin{eqnarray*}
\KGL_{\ast}(\X^{\tau})
& \rto &
\KGL_{\ast}\KGL\otimes_{\KGL_{\ast}}\KGL_{\ast}(\X^{\tau}) \\
& \cong &
(\KGL_{\ast}\KGL\otimes_{\KGL_{\ast}}\KGL_{\ast}(\PP^{\infty}))\otimes_{\KGL_{\ast}(\PP^{\infty})}\KGL_{\ast}(\X^{\tau}).
\end{eqnarray*}
This is clearly a coassociative and unital map between $\KGL_{\ast}(\PP^{\infty})$-modules. 

\begin{remark}
\label{remark:BBGtrivial}
As noted earlier the $\KGL_{\ast}$-module $\KGL_{\ast}(\PP^{\infty})$ is free on the generators $\beta_{i}$ for $i\geq 0$.
Its multiplicative structure can be described in terms of power series.
Modulo the problem of computing the coefficient ring $\KGL_{\ast}$ this leaves us with investigating the K-theory of the homotopy fiber $\X^{\tau}$.
On the other hand, 
an inspection of the module structures in Theorem \ref{theorem:edgemapisomorphism},
cf.~Lemma \ref{lemma:betamapsto}, 
reveals there is an isomorphism
\begin{equation*}
\KGL_{\ast}^{\tau}(\X)
\cong
\KGL_{\ast}(\X^{\tau})/(\beta_{0}-1,\beta_{1}-1,\beta_{i})_{i\geq 2}.
\end{equation*}

In case $\tau$ is the identity map on $\B\BG$ then $\KGL_{\ast}(\B\BG^{\tau})\cong\KGL_{\ast}$.
We claim that $\KGL_{\ast}^{\tau}(\B\BG)$ is the trivial group. 
This follows by comparing the images of $\beta_{0}$ or $\beta_{1} $ in the respective tensor factors.
For example, 
the class $\beta_{1}$ maps to the unit in $\KGL_{\ast}$ and to zero in $\KGL_{\ast}(\B\BG^{\tau})$.
\end{remark}

Next we turn to the constructions of the twisted Chern characters.
The proof of our main result Theorem \ref{theorem:chernisomorphism} relies on results in \cite{Cisinski-Deglise} and \cite{nmp-exactness}.
Let $\MQ$ denote the motivic Eilenberg-MacLane spectrum introduced in \cite{Voevodsky-icm}.
(We refer to \cite{DRO} for a definition of $\MQ$ viewed as a motivic functor.) 
It has the structure of a commutative monoid in the category of motivic symmetric spectra \cite{Rondigs-Ostvar1}, \cite{Rondigs-Ostvar2}.
The periodization $\PM\Q$ of $\MQ$ is also highly structured:
Form the free commutative $\MQ$-algebra $\PM\Q_{\geq 0}$ on one generator in degree $(2,1)$ 
(perform this in $\PP^{1}$-spectra of simplicial presheaves of $\Q$-vector spaces and then transfer the spectrum back to obtain a strictly commutative ring spectrum).
Inverting the same generator following the method in \cite{Rondigs-Spitzweck-Ostvar} produces the commutative monoid $\PM\Q$ whose underlying spectrum is the
infinite wedge sum $\bigvee_{i \in \Z} \Sigma^{2i,i} \MQ$.
\begin{lemma}
\label{lemma:PMQisKGLMQ}
There is an $E_{\infty}$-isomorphism between $\PM\Q$ and $\KGL\wedge\MQ$.
\end{lemma}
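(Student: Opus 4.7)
The plan is to exhibit an $E_\infty$-ring map $\PM\Q\to\KGL\wedge\MQ$ and then verify that it is an equivalence of underlying motivic spectra. Both sides carry canonical $E_\infty$-structures: the source by the construction recalled just above the lemma (a free commutative $\MQ$-algebra followed by a localization in the sense of \cite{Rondigs-Spitzweck-Ostvar}), and the target as the smash product of the two commutative monoids $\KGL$ (in its strict ring model from \cite{Rondigs-Spitzweck-Ostvar}) and $\MQ$.

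First I would produce the map. The Bott element defines a map $S^{2,1}\to \KGL$, hence an $\MQ$-module map $\Sigma^{2,1}\MQ\to \KGL\wedge\MQ$. By the universal property of the free commutative $\MQ$-algebra on a generator in degree $(2,1)$, this extends uniquely to an $E_\infty$-$\MQ$-algebra map
\begin{equation*}
\PM\Q_{\geq 0}\longrightarrow \KGL\wedge\MQ.
\end{equation*}
Since the image of the polynomial generator is the Bott element, which is a unit in $\KGL\wedge\MQ$, the universal property of the localization gives a factorization through the extension
\begin{equation*}
\PM\Q=\PM\Q_{\geq 0}[\beta^{-1}]\longrightarrow \KGL\wedge\MQ.
\end{equation*}
Both localizations are formed by the same construction of \cite{Rondigs-Spitzweck-Ostvar}, so this factorization is again an $E_\infty$-ring map.

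Next I would check that the resulting map is an isomorphism in $\SH(S)$. The source is by construction $\bigvee_{i\in\Z}\Sigma^{2i,i}\MQ$. To identify the target one uses the assumption on the base scheme originating in the work of Cisinski--D\'eglise together with rational motivic Landweber exactness (\cite{Cisinski-Deglise} and \cite{nmp-exactness}): rationally $\KGL$ splits as $\bigvee_{i\in\Z}\Sigma^{2i,i}\MQ$, where the $i$-th Tate summand is produced by the $i$-th power of the Bott element. Thus both sides are wedges of Tate-shifted copies of $\MQ$, and the map constructed above carries the $n$-th power of the polynomial generator to $\beta^{n}$; this identifies summands indexed by $n\in\Z$ on both sides, proving the map is an equivalence.

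The main obstacle is the final identification step: one must know that the rational splitting of $\KGL\wedge\MQ$ into Tate twists of $\MQ$ really takes the form $\bigvee_{i\in\Z}\Sigma^{2i,i}\MQ$ with the $i$-th summand generated by $\beta^{i}$, and this is precisely where the hypotheses on the base scheme (and hence the use of \cite{Cisinski-Deglise} together with \cite{nmp-exactness}) enter. Once this rational decomposition is granted, the compatibility with the Bott element is built into the construction of the map, and the comparison becomes a matching of summands.
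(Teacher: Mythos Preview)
Your construction of the $E_\infty$-map $\PM\Q\to\KGL\wedge\MQ$ is the same as the paper's: use the universal property of the free commutative $\MQ$-algebra on a degree $(2,1)$ generator to hit the Bott class, then invert. The divergence is entirely in the verification that this map is an equivalence.

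The paper's argument is more elementary and does not invoke \cite{Cisinski-Deglise} or \cite{nmp-exactness} at this stage. It interposes $\Sigma^{\infty}\PP^{\infty}_{+}\wedge\MQ$: the generator is sent to the Bott element there, and the resulting map
\[
\PM\Q_{\geq 0}\longrightarrow \Sigma^{\infty}\PP^{\infty}_{+}\wedge\MQ
\]
is an isomorphism simply because the rational motivic homology of $\PP^{\infty}$ is the free $\MQ_\ast$-module on the $\beta_i$, i.e.~a polynomial algebra on one generator in degree $(2,1)$. Now invert the Bott element on both sides; on the right this produces $\KGL\wedge\MQ$ by the Bott-inverted model for $\KGL$ from \cite{Spitzweck-Ostvar}, \cite{Rondigs-Spitzweck-Ostvar}, and one is done. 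No hypothesis on the base scheme is needed, and the lemma in the paper is indeed stated unconditionally.

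Your route, by contrast, imports the rational decomposition of $\KGL$ from \cite{Cisinski-Deglise} and \cite{nmp-exactness} in order to identify the target. This has two drawbacks. First, as you yourself note, it forces hypotheses on $S$ (geometrically unibranched, excellent) which are absent from the statement you are trying to prove; so as a proof of the lemma as stated it is incomplete. Second, in the paper's logical flow this lemma is used to \emph{define} the Chern character $\Ch\colon\KGL\to\PM\Q$, and only afterwards are the results of \cite{Cisinski-Deglise} and \cite{nmp-exactness} invoked to see that $\Ch_\Q$ is an isomorphism; your argument reverses this, coming close to circularity. The fix is simply to replace your final identification step by the elementary observation that $\PM\Q_{\geq 0}\to\Sigma^{\infty}\PP^{\infty}_{+}\wedge\MQ$ is already an isomorphism before inverting, and then use the Bott-inverted description of $\KGL$.
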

\begin{proof}
By the universal property of $\PM\Q_{\geq 0}$ there is a commutative diagram:
\begin{equation*}
\xymatrix{
& \Sigma^{\infty}\PP^{\infty}_{+}\wedge\MQ\ar[d]  \\
\PM\Q_{\geq 0}\ar[ur]\ar[r] & \KGL\wedge\MQ }
\end{equation*}
The generator in degree $(2,1)$ maps to the canonical element in $\Sigma^{\infty}\PP^{\infty}_{+}\wedge\MQ$ determined by the Bott element of 
$\Sigma^{\infty}\PP^{\infty}_{+}$ \cite{Spitzweck-Ostvar}.
The diagonal map is an isomorphism.
Inverting the generator and the Bott element gives the desired isomorphism.
\end{proof}

Lemma \ref{lemma:PMQisKGLMQ} furnishes an $\Sigma^{\infty}\PP^{\infty}_{+}$-algebra structure on $\PM\Q$ via the
map $$\Sigma^{\infty}\PP^{\infty}_{+} \to \KGL \to \KGL \wedge \MQ \cong \PM\Q.$$

Combining Lemma \ref{lemma:PMQisKGLMQ} and the canonical ring map $\KGL\rto\KGL\wedge\MQ$ we arrive at the Chern character
\begin{equation}
\label{equation:cherncharacter}
\Ch
\colon
\KGL
\rto
\PM\Q
\end{equation}
from algebraic K-theory to the periodized rational motivic Eilenberg-MacLane spectrum.
(This is a map of motivic ring spectra.)
For any twist $\tau$, 
smashing (\ref{equation:cherncharacter}) with $\X^{\tau}$ in the homotopy category of $\Sigma^{\infty}\PP^{\infty}_{+}$-modules defines the twisted Chern character
\begin{equation}
\label{equation:twistedcherncharacter}
\Ch^{\tau}
\colon
\KGL^{\tau}
\rto
\PM^{\tau}\Q.
\end{equation}
As asserted in Theorem \ref{theorem:chernisomorphism}, 
the rationalization of (\ref{equation:twistedcherncharacter}) is an isomorphism for geometrically unibranched excellent base schemes.
This follows by combining \cite[Corollary 15.1.6]{Cisinski-Deglise} and \cite[Theorem 10.1, Corollary 10.3]{nmp-exactness}.
 
Similarly we define the cohomological Chern character
\begin{equation}
\label{equation:twistedcherncharacter2}
\Ch_{\tau}
\colon
\KGL_{\tau}
\rto
\PM_{\tau}\Q
\end{equation}
by taking internal hom-objects from $\Sigma^\infty \X^\tau_+$ into the untwisted Chern character $\Ch$. 
We note that the rationalization of (\ref{equation:twistedcherncharacter2}) is an isomorphism over geometrically unibranched excellent base schemes provided $\Sigma^\infty \X^\tau_+$ 
is strongly dualizable in $\Ho (\Sigma^\infty \PP^\infty_+ - \Mod)$.
Indeed, 
this follows immediately by smashing the rational isomorphism in (\ref{equation:cherncharacter}) with the dual of $\Sigma^\infty \X^\tau_+$.

\begin{remark}
In the topological setup, 
Atiyah and Segal \cite{Atiyah-Segal2} employed a different method in order to construct a Chern character for twisted K-theory and a corresponding theory of Chern classes.
We leave the comparison of the two constructions as an open question.
\end{remark}

We end this section by outlining computations of nontrivial twisted K-groups for the motivic $(3,1)$-sphere.
To begin with we allow the base scheme to be an arbitrary field.
In the interest of explicit computations in all degrees, 
we specialize to finite fields.

Recall the smash product decomposition $S^{3,1}=S^{2}\wedge\G$ for the motivic $(3,1)$-sphere.
Moreover, 
there is a homotopy pushout square of motivic spaces:
\begin{equation*}
\xymatrix{
\PP^{1} \ar[r] \ar[d] & \ast \ar[d] \\
\ast\ar[r] & S^{3,1} }
\end{equation*}
We shall consider the twist $\tau_{n}\colon S^{3,1} \rto\B\BG$ corresponding to $n$ times the canonical map $S^{3,1} \rto\B\BG$.
Precomposing with the map $\ast\rto S^{3,1}$ produce null homotopic twists on $\PP^{1}$ and the point.
In order to proceed we infer,
leaving details to the interested reader, 
there is a homotopy pushout diagram:
\begin{equation*}
\xymatrix{
\PP^{\infty}\times\PP^{1} \ar[r] \ar[d] & \PP^{\infty}\times (\PP^{\infty})^{n} \ar[r] & \PP^{\infty}\ar[d] \\
\PP^{\infty} \ar[rr] && (S^{3,1})^{\tau_{n}}   } 
\end{equation*}
The left vertical map is the projection on the first factor.
The upper composite horizontal map arise from embedding $\PP^{1}$ into $(\PP^{\infty})^{n}$ along the diagonal map $\PP^{\infty}\subseteq (\PP^{\infty})^{n}$ and using the $H$-space 
structure on the infinite projective space.
With this in hand we get an induced long exact sequence
\begin{equation}
\label{equation:taunles}
\cdots
\rto
\Sigma^{2,1}\KGL_{\ast}\oplus\KGL_{\ast}
\rto
\KGL_{\ast}\oplus\KGL_{\ast}
\rto
\KGL^{\tau_{n}}_{\ast}(S^{3,1})
\rto
\cdots.
\end{equation}
Next we infer that the map between the direct sums in (\ref{equation:taunles}) is uniformly given by 
\begin{equation}
\label{equation:taunformula}
(a,b)
\mapsto
(an\beta+b,-b).
\end{equation}
Again we leave the details to the interested reader.
(Note that (\ref{equation:taunformula}) is compatible with its evident topological counterpart.)
From (\ref{equation:taunles}) we deduce the exact sequence
\begin{equation}
\label{equation:taunend}
K_{1} \oplus K_{1}\to 
K_{1} \oplus K_{1} \to
K^{\tau_{n}}_{1} (S^{3,1})\to 
K_{0} \oplus K_{0}\to 
K_{0} \oplus K_{0} \to 
K^{\tau_{n}}_{0}(S^{3,1}) \to 0.
\end{equation}
Using (\ref{equation:taunend}) and the fact that $K_{0}$ is infinite cyclic for any field we read off the isomorphism
\begin{equation*}
K^{\tau_{n}}_{0}(S^{3,1})
\cong
\Z/n,
\end{equation*}
where, in general, $K^{\tau}_i(\X)$ is shorthand for $\KGL^\tau_{i,0}(\X)$, $i\in\Z$.
By specializing to a finite field $\mathbb{F}_q$ and an odd integer $i\geq 1$, 
we deduce the exact sequence
\begin{equation}
\label{equation:taunses}
0 \to 
K^{\tau_{n}}_{i+1}(S^{3,1}_{\mathbb{F}_q}) \to 
K_{i} \oplus K_{i} \to 
K_{i} \oplus K_{i}  \to 
K^{\tau_{n}}_{i}(S^{3,1}_{\mathbb{F}_q}) \to 0.
\end{equation}
This follows from (\ref{equation:taunles}) since the $K$-groups for finite fields vanish in positive even degrees \cite{Quillen}.
Combining (\ref{equation:taunformula}) and (\ref{equation:taunses}) yields the isomorphisms
\begin{equation*}
K_{2i}^{\tau_n}(S^{3,1}_{\mathbb{F}_q})
\cong
\ker(\Z/(q^i-1) \overset{\times n}{\longrightarrow}\Z/(q^i-1))
\cong 
\Z/\mathrm{gcd}(n,q^i-1)
\end{equation*}
and
\begin{equation*}
K_{2i-1}^{\tau_n}(S^{3,1}_{\mathbb{F}_q})
\cong
\Z/\mathrm{gcd}(n,q^i-1).
\end{equation*}

\section{Spectral sequences for motivic twisted K-theory}
\label{section:spectralsequencesformotivictwisted K-theory}
In this section we shall construct and show strong convergence of the spectral sequences relating motivic (co)homology to motivic twisted K-theory.
The review of this material in Section \ref{section:mainresults} provides motivation and background from K-theory.
Our approach employs the slice tower formalism introduced by Voevodsky \cite{Voevodsky-slices} and further developed from the viewpoint of colored operads in \cite{GSO}.

Let $r_{i}$ denote the right adjoint of the natural inclusion functor $\Sigma_T^i \SH(S)^\eff \subseteq \SH(S)$.
Define $f_i \colon \SH(S) \to \SH(S)$ as the composite functor
\begin{equation*}
\SH(S) \overset{r_{i}}{\to} \Sigma_T^i \SH(S)^\eff \subseteq \SH(S).
\end{equation*}
In \cite{Voevodsky-slices} the $i$th slice $s_i(X)$ of $X$ is defined as the cofiber of the canonical map 
$$f_{i+1}(X) \to f_i (X).$$ 
In the companion paper \cite{GSO} we show that $f_0$ and $s_0$ respect motivic $E_\infty$-structures, 
and $f_q$ and $s_q$ respect module structures over $E_\infty$-algebras.
Recall that $f_{0}$ is reminiscent of the connective cover in topology.
As a sample result we state the following key result.
\begin{theorem} 
\label{theorem:f0einfty}
Suppose $A$ is an $A_\infty$- or an $E_\infty$-algebra in $\Spt_T^\Sigma(S)$.
Then $f_0(A)$ is naturally equipped with the structure of an $A_\infty$- resp.~$E_\infty$-algebra.
The canonical map $f_0 A \to A$ can be modelled as a map of $A_\infty$- resp.~$E_\infty$-algebras. 
\end{theorem}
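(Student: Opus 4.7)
The plan is to reduce the theorem to the fact that $f_0$ is lax symmetric monoidal with respect to the smash product on motivic spectra. The category $\SH(S)^{\eff}$ is closed under the smash product inherited from $\SH(S)$, since its generators—suspension spectra of smooth schemes over $S$—are closed under products. It follows that the inclusion $i \colon \SH(S)^{\eff} \hookrightarrow \SH(S)$ is strong symmetric monoidal, so its right adjoint $r_0$ is canonically lax symmetric monoidal, and hence $f_0 = i \circ r_0$ is a lax symmetric monoidal endofunctor of $\SH(S)$. Since lax (symmetric) monoidal functors preserve algebras over (symmetric) operads, this immediately gives $f_0 A$ the structure of an $A_\infty$- (resp.~$E_\infty$-)algebra at the level of homotopy categories.

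To produce actual algebra structures at the point-set level, I would first equip $\Spt_T^\Sigma(S)$ with a combinatorial model structure that presents the effective subcategory, obtained for example via right Bousfield localization at the compact set of suspension spectra of smooth schemes. This yields a Quillen adjunction whose right adjoint models $r_0$ underlying spectra, while the left Quillen functor is strong symmetric monoidal because effective objects are closed under the smash product. Consequently $f_0$ acquires a genuine lax symmetric monoidal structure on the category of (effectively cofibrant) symmetric spectra.

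With such a lax presentation of $f_0$ in hand, I would then appeal to the general machinery transporting operadic algebras along lax symmetric monoidal Quillen functors. For the $A_\infty$- and $E_\infty$-cases one uses $\Sigma$-cofibrant replacements of the associative and commutative operads, exactly as in Hornbostel's setup \cite{Hornbostel}; the lax structure on $f_0$ then produces operadic structure maps on $f_0 A$ from those on $A$, and naturality of the counit with respect to the lax structure identifies the canonical map $f_0 A \to A$ as a strict morphism of algebras over the chosen operad.

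The main obstacle is the compatibility between the effective model structure and the stable flat positive model structure in which $E_\infty$-algebras are formed. One must verify that fibrant and cofibrant replacements in operadic algebras interact properly with the slice truncation $f_0$, so that $f_0 A$ is genuinely an $E_\infty$-algebra rather than merely a homotopy $E_\infty$-algebra, and that the pushout-product and monoid axioms persist after Bousfield colocalization. The colored-operad framework developed in the companion paper \cite{GSO} circumvents these technicalities by encoding the pair $(f_0 A \to A)$, together with all of its algebra structure, as a single algebra over a two-colored operad whose underlying model structure exists by general existence results; this bypasses any ad hoc strictification step and yields the theorem in the stated form.
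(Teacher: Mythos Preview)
The paper does not actually prove this theorem here; it is stated as a sample result with the proof deferred entirely to the companion paper \cite{GSO}, whose method (as hinted in the proof of Lemma~\ref{lemma:E00factorization}) is the colored-operad localization machinery. Your proposal correctly identifies the conceptual core---that $f_0$ is lax symmetric monoidal because the inclusion $\SH(S)^{\eff}\hookrightarrow\SH(S)$ is strong monoidal---and in your final paragraph you explicitly invoke the two-colored operad framework of \cite{GSO} to handle the strictification, so your endpoint coincides with the paper's.

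One caution about your intermediate step: modelling $r_0$ via right Bousfield localization does not by itself produce a point-set lax monoidal functor, since $r_0$ is realized by \emph{cofibrant replacement} in the colocalized model structure rather than by an honest right Quillen functor between distinct categories, and functorial cofibrant replacement is not lax monoidal on the nose. You flag this correctly as ``the main obstacle,'' but the sentence ``Consequently $f_0$ acquires a genuine lax symmetric monoidal structure on the category of (effectively cofibrant) symmetric spectra'' overstates what the colocalization alone buys you. The colored-operad approach in \cite{GSO} is not merely a convenience to bypass strictification; it is essentially where the content of the proof lives, and your earlier paragraphs should be read as motivation rather than as an independent argument.
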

The corresponding statements dealing with $s_0$ and modules are formulated in \cite{GSO}.
In the interest of keeping this paper concise we refer to loc.~cit.~for further details.

We define the connective K-theory spectrum $\kgl$ to be $f_0\KGL$. 
With this definition,
$\kgl$ is a $\Sigma^\infty \PP^\infty_+$-module because the $E_\infty$-map $\Sigma^\infty \PP^\infty_+ \to\KGL$ factors uniquely through the connective K-theory spectrum.
Here we use that $f_0$ is a lax monoidal functor that respects $E_\infty$-objects.
More generally, 
$f_i\KGL=\Sigma^{2i,i}\kgl$ is a $\Sigma^\infty\PP^\infty_+$-module.
(The two possible module structures, using either the shift functor or the fact that $f_i$ produces a module over $f_0$, coincide.)
Moreover, 
$f_{i+1} \KGL \to f_i \KGL$ is a $\kgl$-module map, 
hence a $\Sigma^\infty\PP^\infty_+$-module map.
By stitching these maps together we obtain a sequential filtration of $\KGL$ by shifted copies of the connective K-theory spectrum
\begin{equation}
\label{equation:KGLfiltration}
\cdots 
\rto
\Sigma^{2i+2,i+1} \kgl 
\to 
\Sigma^{2i,i}\kgl 
\to 
\cdots 
\to 
\KGL.
\end{equation}
The maps in (\ref{equation:KGLfiltration}) are $\Sigma^\infty\PP^\infty_+$-module maps.
Hence for every twist $\tau\colon\X\rto\B\B\G$ there is an induced filtration of the motivic twisted K-theory spectrum
\begin{equation}
\label{equation:tower1}
\cdots
\rto
\Sigma^\infty \X^\tau_{+} \wedge_{\Sigma^\infty \PP^\infty_+}\Sigma^{2i+2,i+1}\kgl
\rto
\Sigma^\infty \X^\tau_{+} \wedge_{\Sigma^\infty \PP^\infty_+}\Sigma^{2i,i}\kgl
\rto
\cdots
\end{equation}
\begin{equation*}
\rto
\cdots
\rto
\Sigma^\infty \X^\tau_{+} \wedge_{\Sigma^\infty \PP^\infty_+}\KGL=
\KGL^{\tau}.
\end{equation*}
Likewise, 
by applying the functor $\uHom_{\Sigma^\infty \PP^\infty_+}(\Sigma^\infty \X^\tau_+, -)$ to the filtration (\ref{equation:KGLfiltration}) of $\KGL$ we obtain a filtration of $\KGL_\tau$ taking the form
\begin{equation}
\label{equation:tower2}
\cdots
\rto
\uHom_{\Sigma^\infty \PP^\infty_+}(\Sigma^\infty \X^\tau_+,\Sigma^{2i+2,i+1}\kgl)
\rto
\uHom_{\Sigma^\infty \PP^\infty_+}(\Sigma^\infty \X^\tau_+,\Sigma^{2i,i}\kgl)
\rto
\cdots
\end{equation}
\begin{equation*}
\rto
\cdots
\rto
\uHom_{\Sigma^\infty \PP^\infty_+}(\Sigma^\infty \X^\tau_+,\KGL)=
\KGL_{\tau}.
\end{equation*}

Our next objective is to identify the filtration quotients $\QQ_{i}( \X^\tau)$ of the tower (\ref{equation:tower1}) and $\QQ^i(\X^\tau)$ of the tower (\ref{equation:tower2}).
Note that the tower (\ref{equation:tower1}) gives rise to an exact couple by applying homotopy groups for a fixed weight:
\begin{equation*}
\xymatrix{
&
\pi_{\ast}\Sigma^\infty \X^\tau_{+} \wedge_{\Sigma^\infty \PP^\infty_+}\Sigma^{2i+2,i+1}\kgl
\ar[rr] 
&&
\pi_{\ast}\Sigma^\infty \X^\tau_{+} \wedge_{\Sigma^\infty \PP^\infty_+}\Sigma^{2i,i}\kgl
\ar[dl]
\\
&&
\pi_{\ast}\QQ_{i}( \X^\tau) \ar[ul]       }
\end{equation*}
Similarly, 
the tower (\ref{equation:tower2}) gives rise to an exact couple featuring the quotients $\QQ^i(\X^\tau)$. 
Following a standard process we obtain spectral sequences with target graded groups $\KGL^{\tau}_*$ and $\KGL_\tau^*$.
In the following we analyze these spectral sequences in details when the base scheme is a perfect field.

From now on we assume that the base scheme $S$ is a perfect field.
Using the slice computations of $\KGL$ in \cite{Levine-slices} and \cite{Voevodskymssktheory}, \cite{Voevodsky-zeroslices}, 
there is an exact triangle of $\Sigma^\infty \PP^\infty_+$-modules
\begin{equation*}
\Sigma^{2,1}\kgl \to \kgl \to \MZ \to \Sigma^{3,1}\kgl.
\end{equation*}
Thus the filtration quotient $\QQ_i(\X^\tau)$ is isomorphic to 
\begin{equation*}
\Sigma^\infty \X^\tau_{+} \wedge_{\Sigma^\infty \PP^\infty_+}\Sigma^{2i,i}\MZ,
\end{equation*}
whereas the filtration quotient $\QQ^i(\X^\tau)$ is isomorphic to
\begin{equation*}
\uHom_{\Sigma^\infty \PP^\infty_+}(\Sigma^\infty \X^\tau_+,\MZ).
\end{equation*}

\begin{lemma}
\label{lemma:s0ofP00}
The unit map $\unit \rto \Sigma^\infty \PP^\infty_+ $ induces an isomorphism on zero slices.
\end{lemma}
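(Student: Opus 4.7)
The plan is to reduce the claim to showing that the suspension spectrum of $\PP^{\infty}$ pointed at $\PP^{0}$ has trivial zero slice. Since $\PP^{\infty}$ admits the canonical basepoint $\PP^{0}$, the standard retraction argument splits $\Sigma^{\infty}\PP^{\infty}_{+}\simeq \unit\vee\Sigma^{\infty}\PP^{\infty}$ in $\SH(S)$, and under this splitting the unit map becomes the inclusion of the first wedge summand. Applying the triangulated functor $s_{0}$, the lemma reduces to proving $s_{0}(\Sigma^{\infty}\PP^{\infty})=0$.

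To establish this vanishing I would exploit the Schubert cell filtration $\PP^{n}/\PP^{n-1}\simeq S^{2n,n}$, which yields cofiber sequences
\[ \Sigma^{\infty}\PP^{n-1}\longrightarrow\Sigma^{\infty}\PP^{n}\longrightarrow\Sigma^{2n,n}\unit \]
of motivic spectra. Starting from $\Sigma^{\infty}\PP^{0}=0$ and arguing by induction, each $\Sigma^{\infty}\PP^{n}$ lies in $\Sigma_{T}^{1}\SH(S)^{\eff}$ because this subcategory is thick and contains $\Sigma^{2n,n}\unit$ for every $n\geq 1$. Passing to the homotopy colimit gives $\Sigma^{\infty}\PP^{\infty}=\hocolim_{n}\Sigma^{\infty}\PP^{n}$, which again lies in $\Sigma_{T}^{1}\SH(S)^{\eff}$ since the inclusion $\Sigma_{T}^{1}\SH(S)^{\eff}\subseteq\SH(S)$ is left adjoint to $r_{1}$ and hence preserves arbitrary homotopy colimits.

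Finally, for any $X$ in $\Sigma_{T}^{1}\SH(S)^{\eff}$ the counits $f_{1}(X)\to X$ and $f_{0}(X)\to X$ are both isomorphisms, so the comparison $f_{1}(X)\to f_{0}(X)$ is too, forcing $s_{0}(X)=0$. Applied to $X=\Sigma^{\infty}\PP^{\infty}$ this completes the proof.

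The main (and essentially only) nontrivial ingredient is the closure of $\Sigma_{T}^{1}\SH(S)^{\eff}$ under homotopy colimits in $\SH(S)$, which follows immediately from the fact that the inclusion is the left adjoint of the truncation $r_{1}$ appearing in the slice formalism recalled above; everything else is a formal consequence of the triangulated structure of $s_{0}$ together with the cellular filtration of $\PP^{\infty}$.
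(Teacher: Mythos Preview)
Your proof is correct and follows essentially the same approach as the paper: both arguments use the cell filtration $\Sigma^{\infty}\PP^{n-1}_{+}\to\Sigma^{\infty}\PP^{n}_{+}\to\Sigma^{2n,n}\unit$ together with a passage to the homotopy colimit. The only cosmetic difference is that the paper applies $s_{0}$ directly and invokes the fact that $s_{0}$ commutes with homotopy colimits (citing \cite[Lemma~4.4]{Spitzweckslices}), whereas you first split off $\unit$ and then show $\Sigma^{\infty}\PP^{\infty}\in\Sigma_{T}^{1}\SH(S)^{\eff}$ before concluding that $s_{0}$ vanishes there; these are equivalent ways of packaging the same colimit argument.
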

\begin{proof}
Induction on the cofiber sequence
\begin{equation*}
\Sigma^\infty \PP^{n-1}_+ 
\rto
\Sigma^\infty \PP^n_+ 
\rto
\Sigma^{2n,n} \unit
\end{equation*}
gives the isomorphism
\begin{equation*}
s_{0} \unit
\cong
s_{0}\Sigma^\infty \PP^n_+.
\end{equation*}
To conclude we use that $s_{0}$ commutes with homotopy colimits,
cf.~\cite[Lemma 4.4]{Spitzweckslices}.
\end{proof}

\begin{lemma}
\label{lemma:E00factorization}
The diagram of $E_\infty$-ring spectra
$$
\xymatrix{\Sigma^\infty \PP^\infty_+ \ar[r] \ar[rd] & \kgl \ar[r] & \MZ \\
& \unit \ar[ru] &}
$$
commutes.
\end{lemma}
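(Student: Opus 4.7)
The plan is to reduce the commutativity of the diagram to an equality of ring endomorphisms of $\MZ$ by applying the zero slice functor $s_0$. First I would identify the two rightward maps as slice projections: the map $\kgl\to\MZ$ arising from the exact triangle $\Sigma^{2,1}\kgl\to\kgl\to\MZ$ is the canonical projection $\kgl=f_0\KGL\to s_0\KGL=\MZ$ from the slice computation of algebraic K-theory, and for a perfect base field the unit map $\unit\to\MZ$ is the projection $\unit\to s_0\unit=\MZ$ supplied by the Voevodsky--Levine computation of the zero slice of the sphere spectrum.

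Next I would apply naturality of the slice projection $\mathrm{id}\to s_0$ to each composite. For the top composite, naturality gives that $\Sigma^\infty\PP^\infty_+\to\kgl\to\MZ$ factors as the slice projection $\Sigma^\infty\PP^\infty_+\to s_0\Sigma^\infty\PP^\infty_+$ followed by $s_0$ of the map $\Sigma^\infty\PP^\infty_+\to\kgl$, and analogously for the composite through $\unit$. By Lemma \ref{lemma:s0ofP00}, both $s_0\Sigma^\infty\PP^\infty_+$ and $s_0\unit$ are canonically identified with $\MZ$, so in each case the ``second arrow'' becomes an $E_{\infty}$-ring endomorphism of $\MZ$. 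Here I use the functoriality of $s_0$ on $E_{\infty}$-algebras coming from Theorem \ref{theorem:f0einfty} and its analogues in \cite{GSO}.

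To finish I would observe that any ring endomorphism of $\MZ$ is homotopic to the identity: spectrum maps $\MZ\to\MZ$ in $\SH(S)$ are classified by $\pi_{0,0}\MZ\cong\Z$, and being a ring map forces the classifying element to be $1$. Hence both of the ``second arrows'' are identity maps, both composites coincide with the slice projection $\Sigma^\infty\PP^\infty_+\to s_0\Sigma^\infty\PP^\infty_+=\MZ$, and the diagram commutes. The main obstacle I anticipate is bookkeeping rather than conceptual: one must verify that the identifications $s_0\kgl\cong\MZ$, $s_0\unit\cong\MZ$, and $s_0\Sigma^\infty\PP^\infty_+\cong\MZ$ are canonically compatible as $E_{\infty}$-ring identifications, which relies essentially on the slice machinery for $E_{\infty}$-algebras developed in \cite{GSO}.
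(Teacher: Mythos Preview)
Your approach is essentially the same as the paper's: apply $s_0$, use Lemma~\ref{lemma:s0ofP00} to identify the zero slices with $\MZ$, and conclude that both composites agree because the induced self-maps of $\MZ$ are forced to be the identity. Your explicit observation that $\pi_{0,0}\MZ\cong\Z$ pins down a unital map is a clean way to finish, and the paper leaves this step implicit.

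The one point where the paper is sharper is exactly what you flag as ``bookkeeping.'' You invoke functoriality of $s_0$ on $E_\infty$-algebras as if it were routine, but the paper is explicit that the $E_\infty$-structures on zero slices produced in \cite{GSO} are \emph{not} transparently functorial. To get the naturality squares
\[
\xymatrix{\Sigma^\infty \PP^\infty_+ \ar[r] \ar[d] & \kgl \ar[d] \\ \MZ \ar[r] & \MZ}
\qquad
\xymatrix{\Sigma^\infty \PP^\infty_+ \ar[r] \ar[d] & \unit \ar[d] \\ \MZ \ar[r] & \MZ}
\]
to commute \emph{as diagrams of $E_\infty$-ring spectra}, the paper applies the localization machinery of \cite{GSO} not to the $E_\infty$-operad itself but to the two-colored operad whose algebras are arrows between $E_\infty$-algebras. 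This is precisely the device that makes $s_0$ functorial enough for the argument, and it is the substantive content of the proof rather than a bookkeeping afterthought. Your outline is correct, but you should recognize that this step is where the actual work lies.
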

\begin{proof}
By Lemma \ref{lemma:s0ofP00} and the isomorphism $s_{0}\unit\cong\MZ$, 
applying the zero slice functor to the maps $\Sigma^\infty \PP^\infty_+ \to \kgl$ and $\Sigma^\infty \PP^\infty_+ \to \unit$ produces diagrams of $E_\infty$-ring spectra:
$$
\xymatrix{\Sigma^\infty \PP^\infty_+ \ar[r] \ar[d] & \kgl \ar[d] \\
\MZ \ar[r] & \MZ} \hspace{1cm}
\xymatrix{\Sigma^\infty \PP^\infty_+ \ar[r] \ar[d] & \unit \ar[d] \\
\MZ \ar[r] & \MZ}
$$
Now, 
since the construction of $E_\infty$-structures on zero slices in \cite{GSO} is not transparently functorial, 
a trick is required in order to verify commutativity of the two diagrams.
This follows by applying the localization machinery of \cite{GSO} to the two-colored operad whose algebras comprise maps between $E_\infty$-algebras.
\end{proof}

\begin{theorem}
\label{theorem:filtrationquotient}
There exists an isomorphism in the motivic stable homotopy category between the filtration quotient $\QQ_{i}( \X^\tau)$ of (\ref{equation:tower1}) and the $(2i,i)$-suspension 
of the motive $\MZ\wedge\Sigma^{\infty}\X_{+}$ of $\X$.
Likewise, 
there exists an isomorphism between the filtration quotient $\QQ^i(\X^\tau)$ of (\ref{equation:tower2}) and the $(2i,i)$-suspension of the internal hom $\uHom(\Sigma^\infty \X_+,\MZ)$.
\end{theorem}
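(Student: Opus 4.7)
The plan is to reduce both statements to Lemma \ref{lemma:P00topoint} by means of a change-of-scalars argument enabled by Lemma \ref{lemma:E00factorization}. The discussion immediately preceding the theorem has already identified $\QQ_i(\X^\tau)$ with $\Sigma^\infty \X^\tau_+ \wedge_{\Sigma^\infty \PP^\infty_+} \Sigma^{2i,i}\MZ$ and $\QQ^i(\X^\tau)$ with $\uHom_{\Sigma^\infty \PP^\infty_+}(\Sigma^\infty \X^\tau_+, \Sigma^{2i,i}\MZ)$, so the only remaining task is to simplify these two expressions.

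First I would invoke Lemma \ref{lemma:E00factorization}: the $\Sigma^\infty \PP^\infty_+$-algebra structure on $\MZ$ factors through the unit map $\Sigma^\infty \PP^\infty_+ \to \unit$ as a map of $E_\infty$-ring spectra, so the $\Sigma^\infty \PP^\infty_+$-module structure on $\MZ$ used to build towers (\ref{equation:tower1}) and (\ref{equation:tower2}) is pulled back from its $\unit$-module structure, that is, from its underlying spectrum structure, along the augmentation. Next I would apply the standard base-change identity: for a factorization $R \to S \to T$ of commutative ring spectra and an $R$-module $M$, the derived equivalence $M \wedge_R T \cong (M \wedge_R S) \wedge_S T$ holds. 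Taking $R = \Sigma^\infty \PP^\infty_+$, $S = \unit$, $T = \MZ$, and $M = \Sigma^\infty \X^\tau_+$, and then appealing to Lemma \ref{lemma:P00topoint} to rewrite $\Sigma^\infty \X^\tau_+ \wedge_{\Sigma^\infty \PP^\infty_+} \unit$ as $\Sigma^\infty \X_+$, yields
\begin{equation*}
\Sigma^\infty \X^\tau_+ \wedge_{\Sigma^\infty \PP^\infty_+} \MZ
\cong
\Sigma^\infty \X_+ \wedge \MZ,
\end{equation*}
and commuting the bigraded suspension past the smash product produces the first isomorphism. For the cohomological statement I would run the analogous argument via the extension/restriction of scalars adjunction $\uHom_R(M, N) \cong \uHom_S(M \wedge_R S, N)$ applied to the $R$-module $M = \Sigma^\infty \X^\tau_+$ and the $S$-module $N = \Sigma^{2i,i}\MZ$; combined with Lemma \ref{lemma:P00topoint} this delivers the $\Sigma^{2i,i}$-suspension of $\uHom(\Sigma^\infty \X_+, \MZ)$.

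The main obstacle I anticipate is purely technical, namely verifying that the derived base-change and Hom-adjunction identities are really available in the closed symmetric monoidal model for modules over the $E_\infty$-ring spectrum $\Sigma^\infty \PP^\infty_+$ in such a way that the $E_\infty$-ring map supplied by Lemma \ref{lemma:E00factorization} pulls back the $\unit$-module structure on $\MZ$ to precisely the $\Sigma^\infty \PP^\infty_+$-module structure used to construct the slice filtration of $\KGL$. Since Lemma \ref{lemma:E00factorization} explicitly records this as a commuting diagram of $E_\infty$-ring spectra, the required compatibility should follow from the model-categorical framework of \cite{Hornbostel} and \cite{GSO} invoked throughout the preceding constructions, without further essential input.
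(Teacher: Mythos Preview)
Your proposal is correct and follows essentially the same route as the paper: reduce to $i=0$, use Lemma \ref{lemma:E00factorization} to factor the $\Sigma^\infty\PP^\infty_+$-module structure on $\MZ$ through $\unit$, apply the base-change isomorphism $M\wedge_R T\cong (M\wedge_R S)\wedge_S T$, and then invoke Lemma \ref{lemma:P00topoint}; for the cohomological part the paper runs the adjunction through $\MZ$-modules rather than $\unit$-modules, but this is a cosmetic difference.
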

\begin{proof}
It suffices to consider the case $i=0$.
The $0$th filtration quotient $\QQ_{0}( \X^\tau)$ identifies with $\Sigma^\infty \X^\tau_{+} \wedge_{\Sigma^\infty \PP^\infty_+}\MZ$.
By Lemma \ref{lemma:E00factorization} there is an isomorphism
\begin{equation*}
\Sigma^\infty \X^\tau_{+} \wedge_{\Sigma^\infty \PP^\infty_+}\MZ
\cong
(\Sigma^\infty \X^\tau_{+} \wedge_{\Sigma^\infty \PP^\infty_+}\unit)\wedge_{\unit}\MZ.
\end{equation*}
Lemma \ref{lemma:P00topoint} implies there is an isomorphism
\begin{equation*}
(\Sigma^\infty \X^\tau_{+} \wedge_{\Sigma^\infty \PP^\infty_+}\unit)\wedge_{\unit}\MZ
\cong
\Sigma^\infty \X_{+} \wedge\MZ.
\end{equation*}
The proof of the statement for $\QQ^i(\X^\tau)$ proceeds similarly by comparing the module categories over $\Sigma^\infty \PP^\infty_+$ and $\MZ$ via the isomorphisms 
\begin{align*}
\QQ^{0}(\X^\tau) & \cong 
\uHom_{\Sigma^\infty \PP^\infty_+}(\Sigma^\infty \X^\tau_+,\MZ)\cong \uHom_{\MZ}(\Sigma^\infty \X^\tau_+ \wedge_{\Sigma^\infty \PP^\infty_+} \MZ,\MZ) \\
& \cong 
\uHom_{\MZ}(\Sigma^\infty \X_+ \wedge \MZ,\MZ) \cong \uHom(\Sigma^\infty \X_+,\MZ).
\end{align*}
\end{proof}

The isomorphisms in Theorem \ref{theorem:filtrationquotient} are clearly functorial in $\X$ and $\tau$. 
It is important to note that the filtration quotients $\QQ_{i}( \X^\tau)$ and $\QQ^i(\X^\tau)$ are independent of the twist.

Theorem \ref{theorem:filtrationquotient} implies there exist spectral sequences 
\begin{equation}
\label{equation:twistedKss1}
\MZ_{\ast}(\Sigma^\infty \X_{+} )
\Longrightarrow
\KGL^{\tau}_{\ast}(\X)
\end{equation}
and
\begin{equation}
\label{equation:twistedKss2}
\MZ^{\ast}(\Sigma^\infty \X_{+} )
\Longrightarrow
\KGL_{\tau}^{\ast}(\X)
\end{equation}
relating motivic homology and cohomology to motivic twisted K-theory. 
In what follows we shall discuss the convergence properties of (\ref{equation:twistedKss1}) and (\ref{equation:twistedKss2}).
Our approach makes use of the notion of ``very effectiveness'' which is of independent interest in motivic homotopy theory over 
any base scheme $S$.
In order to make this precise we introduce the following subcategory of $\SH(S)$.
\begin{definition}
The very effective motivic stable homotopy category $\SH(S)^{\Veff}$ is the smallest full subcategory of $\SH(S)$ that contains all 
suspension spectra of smooth schemes of finite type over $S$ and is closed under extensions and homotopy colimits.
\end{definition}

We note that $\SH(S)^{\Veff}$ is not a triangulated category since it is not closed under simplicial desuspension.
However,
it is a subcategory of the effective motivic stable homotopy category,
which we denote by $\SH(S)^{\eff}$.
Finally, 
we remark that $\SH(S)^{\Veff}$ forms the homologically positive part of $t$-structures on $\SH(S)$ and $\SH(S)^{\eff}$.

\begin{lemma}
\label{lemma:veffclosedtensorproduct}
The subcategory $\SH(S)^{\Veff}$ of $\SH(S)$ is closed under the smash product.
\end{lemma}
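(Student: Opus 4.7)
The plan is to use the standard ``double generators'' argument: fix one factor at a time and show the set of ``good'' second factors contains the generators and is closed under the defining operations.

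Concretely, for $X\in\SH(S)^{\Veff}$ I would introduce
\begin{equation*}
\mathcal{C}_X \;=\; \{\, Y\in\SH(S) \;:\; X\wedge Y \in \SH(S)^{\Veff}\,\}
\end{equation*}
and show that $\mathcal{C}_X$ contains $\SH(S)^{\Veff}$. First I would verify that $\mathcal{C}_X$ is closed under homotopy colimits and extensions: the functor $X\wedge(-)$ is a left adjoint on $\SH(S)$ (since $\SH(S)$ is closed symmetric monoidal), so it preserves homotopy colimits and sends distinguished triangles to distinguished triangles. Hence if $Y_i\in\mathcal{C}_X$ then $X\wedge\hocolim Y_i\cong\hocolim(X\wedge Y_i)$ is a homotopy colimit of objects of $\SH(S)^{\Veff}$, which lies in $\SH(S)^{\Veff}$ by definition; similarly, a cofiber sequence $A\to B\to C$ with $A,C\in\mathcal{C}_X$ yields a cofiber sequence $X\wedge A\to X\wedge B\to X\wedge C$ with outer terms in $\SH(S)^{\Veff}$, so closure under extensions places $X\wedge B$ in $\SH(S)^{\Veff}$.

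Next I would reduce to the case of generators. Taking $X=\Sigma^\infty U_+$ for $U\in\Sm$, the category $\mathcal{C}_X$ contains every suspension spectrum $\Sigma^\infty V_+$ because
\begin{equation*}
\Sigma^\infty U_+\wedge\Sigma^\infty V_+\;\cong\;\Sigma^\infty(U\times V)_+,
\end{equation*}
and $U\times V\in\Sm$. Combined with the closure properties above and the minimality in the definition of $\SH(S)^{\Veff}$, this gives $\mathcal{C}_X\supseteq\SH(S)^{\Veff}$. Thus $\Sigma^\infty U_+\wedge Y\in\SH(S)^{\Veff}$ for every $Y\in\SH(S)^{\Veff}$ and every $U\in\Sm$.

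Finally I would iterate the argument in the other slot. Fix $Y\in\SH(S)^{\Veff}$ and set
\begin{equation*}
\mathcal{D}_Y \;=\; \{\, X\in\SH(S) \;:\; X\wedge Y \in \SH(S)^{\Veff}\,\}.
\end{equation*}
By the previous paragraph $\mathcal{D}_Y$ contains all $\Sigma^\infty U_+$ with $U\in\Sm$, and by symmetric reasoning it is closed under homotopy colimits and extensions. Minimality forces $\mathcal{D}_Y\supseteq\SH(S)^{\Veff}$, which yields the claim. The only potential friction is making sure $X\wedge(-)$ genuinely preserves homotopy colimits and cofiber sequences at the derived level; this is where I would invoke a symmetric monoidal model such as motivic symmetric spectra \cite{Jardine}, where $\wedge$ is a left Quillen bifunctor, so the assertion holds on homotopy categories without further work.
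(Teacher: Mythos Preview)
Your proposal is correct and is essentially the same two-variable ``induction on generators'' argument the paper gives: first fix a suspension spectrum $\Sigma^\infty U_+$ in one slot and close up under homotopy colimits and extensions in the other, then repeat with the roles reversed. Your formulation via the subcategories $\mathcal{C}_X$ and $\mathcal{D}_Y$ and the explicit appeal to minimality makes the logic a bit more transparent than the paper's informal phrasing, and your remark on why $X\wedge(-)$ preserves homotopy colimits at the derived level is a welcome addition.
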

\begin{proof}
To begin with, 
suppose $\E\in\SH(S)^{\Veff}$ and $X\in\Sm$.
Then $\Sigma^\infty X_{+}\wedge\E$ lies in $\SH(S)^{\Veff}$ by the following ``induction'' argument on the form of $\E$.
It clearly holds when $\E=\Sigma^\infty Y_{+}$ for some $Y\in\Sm$. 
Suppose $\E=\hocolim\,\E_i$ and $\Sigma^\infty X_{+}\wedge \E_i \in \SH(S)^{\Veff}$. 
Then $\Sigma^\infty X_{+}\wedge \E \in \SH(S)^{\Veff}$ because $\SH(S)^{\Veff}$ is closed under homotopy colimits. 
Furthermore, 
if in a triangle
$$
\AAA \to \E \to \B \to \AAA[1], 
$$ 
$\Sigma^\infty X_{+}\wedge \AAA \in \SH(S)^{\Veff}$ and likewise for $\B$, 
then $\Sigma^\infty X_{+}\wedge\E \in\SH(S)^{\Veff}$ because $\SH(S)^{\Veff}$ is closed under extensions by definition.
A similar ``induction'' argument in the first variable shows now that for all objects $\F,\E \in\SH(S)^{\Veff}$ the smash product 
$\F \wedge \E \in\SH(S)^{\Veff}$.
\end{proof}

For the definition of the algebraic cobordism spectrum $\MGL$ we refer to \cite{Voevodsky-icm}.
One of the reasons why the category $\SH(S)^{\Veff}$ is of interest is that it contains $\MGL$ for general base schemes.
\begin{theorem}
\label{theorem:MGLveff}
The algebraic cobordism spectrum $\MGL$ is very effective.
\end{theorem}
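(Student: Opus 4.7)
The strategy is to realize $\MGL$ as a filtered homotopy colimit in $\SH(S)$ of objects that are manifestly very effective, leveraging the closure of $\SH(S)^{\Veff}$ under homotopy colimits and extensions (and, by Lemma \ref{lemma:veffclosedtensorproduct}, under smash products). The standard presentation of $\MGL$ gives
\begin{equation*}
\MGL
\simeq
\hocolim_{n}\Sigma^{-2n,-n}\Sigma^{\infty}\Thom(\gamma_n),
\qquad
\Thom(\gamma_n)
\simeq
\hocolim_{k}\Thom(\gamma_{n,k}),
\end{equation*}
where $\gamma_{n,k}$ denotes the tautological rank $n$ bundle on the finite Grassmannian $\mathrm{Gr}(n,n+k)$. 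By closure of $\SH(S)^{\Veff}$ under homotopy colimits, it is enough to show $\Sigma^{-2n,-n}\Sigma^{\infty}\Thom(\gamma_{n,k}) \in \SH(S)^{\Veff}$ for every $n,k \geq 0$.

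For this I would use the Schubert cell decomposition of $\mathrm{Gr}(n,n+k)$, which is defined over $\Z$ and base-changes to $S$. Its strata $C_\lambda \cong \A^{|\lambda|}$ are indexed by partitions $\lambda\subseteq(k^n)$, with $|\lambda|$ ranging from $0$ to $nk$. Choose a linear extension $\lambda_1 < \lambda_2 < \cdots < \lambda_N$ of Bruhat order and set
\begin{equation*}
U_i := \mathrm{Gr}(n,n+k)\setminus\bigcup_{j\leq i}\overline{C_{\lambda_j}}.
\end{equation*}
Each $U_i$ is a smooth open subscheme of $\mathrm{Gr}(n,n+k)$, and $C_{\lambda_i}\hookrightarrow U_{i-1}$ is a closed embedding of smooth $S$-schemes with normal bundle $\nu_i$ of rank $nk-|\lambda_i|$. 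Combining Morel--Voevodsky purity with the vector bundle $\gamma_{n,k}$ produces cofiber sequences
\begin{equation*}
\Sigma^{\infty}\Thom(\gamma_{n,k}|_{U_i})
\rto
\Sigma^{\infty}\Thom(\gamma_{n,k}|_{U_{i-1}})
\rto
\Sigma^{\infty}\Thom\bigl(\gamma_{n,k}|_{C_{\lambda_i}}\oplus\nu_i\bigr).
\end{equation*}
The summed bundle $\gamma_{n,k}|_{C_{\lambda_i}}\oplus\nu_i$ has total rank $n+(nk-|\lambda_i|)$ and sits over $\A^{|\lambda_i|}$, where it is trivial by the explicit Bruhat trivialization, available already over $\Z$. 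Hence the right-hand term identifies with $\Sigma^{2(nk+n-|\lambda_i|),\,nk+n-|\lambda_i|}\unit$, and after applying $\Sigma^{-2n,-n}$ each successive cofiber becomes $\Sigma^{2(nk-|\lambda_i|),\,nk-|\lambda_i|}\unit$ with $nk-|\lambda_i| \geq 0$; such Tate-suspensions of the sphere lie in $\SH(S)^{\Veff}$.

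Starting from $\Sigma^{-2n,-n}\Sigma^{\infty}\Thom(\gamma_{n,k}|_{U_N}) = \ast$ and iterating the triangles backwards through the filtration, closure of $\SH(S)^{\Veff}$ under extensions yields $\Sigma^{-2n,-n}\Sigma^{\infty}\Thom(\gamma_{n,k}) \in \SH(S)^{\Veff}$; passing to homotopy colimits first in $k$ and then in $n$ delivers the theorem. The main obstacle I anticipate is making the Schubert filtration behave correctly on the Thom space level over an arbitrary noetherian base: the closed Schubert varieties $\overline{C_{\lambda_j}}$ are not smooth in general, so one cannot place their suspension spectra directly into $\SH(S)^{\Veff}$ via the defining generators; the argument must instead work exclusively with the successive smooth pairs $(C_{\lambda_i},U_{i-1})$ and the purity isomorphism they feed into. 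The key numerical point ensuring that the desuspension by $(2n,n)$ remains in $\SH(S)^{\Veff}$ is the positivity $nk-|\lambda_i|\geq 0$, which is what forces every associated graded piece into the positive Tate range.
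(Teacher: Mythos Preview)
Your argument is correct and follows the same overall strategy as the paper---realize $\MGL$ as a filtered homotopy colimit of desuspended Thom spectra over finite Grassmannians and show each of these lies in $\SH(S)^{\Veff}$ by decomposing the Grassmannian---but the decomposition you use is genuinely different. You filter $\mathrm{Gr}(n,n+k)$ by the full Schubert stratification and invoke Thom-space purity for each smooth pair $(C_{\lambda_i},U_{i-1})$, obtaining Tate spheres $\Sigma^{2(nk-|\lambda_i|),\,nk-|\lambda_i|}\unit$ directly as the associated graded pieces. The paper instead runs a double induction on the parameters $(n,d)$ via the closed/open pair coming from the map $\iota\colon\GG(n,d+1)\to\GG(n+1,d+1)$, producing at each stage a triangle whose third term is a Thom space over a \emph{smaller Grassmannian} rather than a single cell (Proposition~\ref{proposition:555}); the induction bottoms out only after both indices drop. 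Your route is more transparent for the theorem as stated, and you correctly sidestep the singularity of Schubert varieties by working only with the open complements. What the paper's approach buys is the sharper conclusion it records just after the theorem: the cofiber of the unit $\unit\to\MGL$ lands in $\Sigma_T\SH(S)^{\Delta}_{\geq 0}$, the subcategory generated by \emph{positive} Tate spheres alone. Your filtration in fact yields the same refinement (the unique cell with $|\lambda|=nk$ contributes the copy of $\unit$, all others contribute strictly positive Tate twists), though you do not isolate it.
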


In fact our proof of Theorem \ref{theorem:MGLveff} shows the following stronger statement:
The cofiber of the unit map $\unit \to \MGL$ is contained in $\Sigma_T \SH(S)^{\Delta}_{\geq 0}$, 
where $\SH(S)^{\Delta}_{\geq 0}$ is the smallest full saturated subcategory of $\SH(S)$ that contains the suspension spectra 
$\Sigma^{2i,i} \unit$ for every $i\ge 0$ and is closed under homotopy colimits and extensions.
The notation $\Sigma_T$ refers to suspension with respect to the Tate object,
i.e.~$\Sigma_T=\Sigma^{2,1}$ in the usual bigrading.
\begin{lemma} 
\label{lemma:cof}
Let $r$ be an integer and suppose
$$
\Sigma^{2r,r} \unit \to \AAA \to \B \to \Sigma^{2r+1,r} \unit
$$ 
and
$$
\AAA \to \E \to \F \to \AAA[1]
$$
are triangles in $\SH(S)$. 
If $\B,\F\in\Sigma_T^{r+1}\SH(S)^{\Delta}_{\geq 0}$ then the cofiber of $\Sigma^{2r,r}\unit\to\E$ lies in $\Sigma_T^{r+1}\SH(S)^{\Delta}_{\geq 0}$.
\end{lemma}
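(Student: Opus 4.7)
The plan is to apply the octahedral axiom to the composable pair $\Sigma^{2r,r}\unit \to \AAA \to \E$, where the first map comes from the given first triangle and the second map from the given second triangle. The cofiber of the first map is $\B$ and the cofiber of the second map is $\F$, both of which lie in $\Sigma_T^{r+1}\SH(S)^{\Delta}_{\geq 0}$ by hypothesis.

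First I would form the composite $\Sigma^{2r,r}\unit \to \E$ and let $\C$ denote its cofiber. The octahedral axiom produces a distinguished triangle
\begin{equation*}
\B \rto \C \rto \F \rto \B[1]
\end{equation*}
expressing $\C$ as an extension of $\F$ by $\B$. Since the subcategory $\SH(S)^{\Delta}_{\geq 0}$ is closed under extensions by definition, and $\Sigma_T^{r+1}(-)$ is a triangulated self-equivalence of $\SH(S)$, the shifted subcategory $\Sigma_T^{r+1}\SH(S)^{\Delta}_{\geq 0}$ is likewise closed under extensions. Applying this closure property to the triangle above gives $\C \in \Sigma_T^{r+1}\SH(S)^{\Delta}_{\geq 0}$, which is exactly the desired conclusion.

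There is no real obstacle here; the lemma is essentially a formal consequence of the octahedral axiom together with the closure of $\SH(S)^{\Delta}_{\geq 0}$ under extensions. The only mild subtlety is making sure the identification of cofibers in the octahedron matches the two given triangles on the nose, which is automatic because those triangles are exactly what exhibit $\B$ and $\F$ as the relevant cofibers.
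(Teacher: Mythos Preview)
Your proposal is correct and follows the same approach as the paper: the paper's proof simply observes that the cofiber of $\Sigma^{2r,r}\unit\to\E$ is an extension of $\F$ by $\B$ (which is exactly your octahedral-axiom triangle) and then invokes closure of $\Sigma_T^{r+1}\SH(S)^{\Delta}_{\geq 0}$ under extensions. Your version just makes the use of the octahedral axiom explicit.
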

\begin{proof}
This follows since the cofiber of $\Sigma^{2r,r}\unit\to\E$ is an extension of $\F$ by $\B$ and the category 
$\Sigma_T^{r+1} \SH(S)^{\Delta}_{\geq 0}$ is closed under extensions.
\end{proof}

Let $\GG(n,d)$ denote the Grassmannian parametrizing locally free quotients of rank $d$ of the trivial bundle of rank $n$. 
Recall there is a universal subsheaf $\caK_{n,d}$ of $\caO^{n}$ and a natural map $\iota\colon\GG(n,d) \to \GG(n+1,d)$ that classifies the 
subbundle $\caK_{n,d}\oplus \caO$ of $\caO^{n+1}$. 
Denote by $\overline{\iota}$ the canonical point of $\GG(n,d)$ obtained by the composite map 
$$
\ast
\cong
\GG(d,d)
\overset{\iota}{\to}
\GG(d+1,d)
\overset{\iota}{\to}
\cdots
\overset{\iota}{\to}
\GG(n,d).
$$
We are interested in vector bundles of a particular type over Grassmannians.
\begin{proposition} 
\label{proposition:555}
Suppose $\caE$ is a vector bundle of rank $r$ over the Grassmannian $\GG(n,d)$ which is a finite sum of copies of $\caK_{n,d}$ and its dual 
$\caK_{n,d}'$ and $\caO$.
Then $\overline{\iota}^* \caE$ is canonically trivialized. 
Furthermore the cofiber of the map between the suspension spectra of Thom spaces $\Sigma^{2r,r} \unit \to \Sigma^\infty \Thom(\caE)$ lies in 
$\Sigma_T^{r+1} \SH(S)^{\Delta}_{\geq 0}$.
\end{proposition}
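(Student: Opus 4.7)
The first claim is direct bookkeeping. The canonical point $\overline{\iota}$ is the iterated composition $\ast = \GG(d,d) \to \GG(d+1,d) \to \cdots \to \GG(n,d)$, classifying the quotient $\caO^n \to \caO^d$ given by projection onto the first $d$ coordinates. Therefore $\overline{\iota}^*\caK_{n,d}$ is canonically the trivial subbundle $\caO^{n-d}$ sitting in the last $n-d$ coordinates, $\overline{\iota}^*\caK_{n,d}'$ inherits a canonical trivialisation by duality, and $\overline{\iota}^*\caO = \caO$; any finite sum thereof is canonically trivialised.

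For the cofiber assertion I would induct on $n$, with base case $n = d$ (where $\GG(n,d) = \ast$ and the map is an isomorphism). It is natural to strengthen the statement so as to also admit the quotient bundles $Q_{n,d}$ and $Q_{n,d}'$ among the allowed summands; both acquire canonical trivialisations at $\overline{\iota}$ (since $Q_{n,d}|_{\overline{\iota}} \cong \caO^d$), and $Q_{n-1,d}$ appears unavoidably in the induction as a normal bundle.

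The inductive step hinges on the closed-open decomposition of $\GG(n,d)$ into $Z := \iota(\GG(n-1,d))$, the locus where the quotient factors through $\caO^{n-1}$, and its complement $U$, which is an $\A^{d-1}$-bundle over $\GG(n-1,d-1)$. A direct calculation shows that $\iota$ has normal bundle $Q_{n-1,d}$; that $\caK_{n,d}|_Z = \caK_{n-1,d} \oplus \caO$, $\caK_{n,d}'|_Z = \caK_{n-1,d}' \oplus \caO$, $Q_{n,d}|_Z = Q_{n-1,d}$ and $Q_{n,d}'|_Z = Q_{n-1,d}'$; and that the restrictions to $U$ pull back from the like-named bundles on $\GG(n-1,d-1)$ along the affine-bundle projection. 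Consequently $\caE|_U$ descends up to $\A^1$-equivalence to an admissible bundle $\tilde{\caE}$ of rank $r$ on $\GG(n-1,d-1)$, while $\caE|_Z \oplus Q_{n-1,d}$ is admissible of rank $r+d$ on $\GG(n-1,d)$. The Gysin cofiber sequence
\begin{equation*}
\Sigma^\infty \Thom_U(\caE|_U) \to \Sigma^\infty \Thom(\caE) \to \Sigma^\infty \Thom_Z(\caE|_Z \oplus Q_{n-1,d})
\end{equation*}
is the engine of the induction.

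To finish, analyse the canonical map $f \colon \Sigma^{2r,r}\unit \to \Sigma^\infty \Thom(\caE)$ through this sequence. Since $\overline{\iota} \in Z$ and the zero-section inclusion $\Thom_Z(\caE|_Z) \to \Thom_Z(\caE|_Z \oplus Q_{n-1,d})$ is null-homotopic, the composition of $f$ with the second map vanishes. Choosing a lift $\tilde{f} \colon \Sigma^{2r,r}\unit \to \Sigma^\infty \Thom_U(\caE|_U)$ of $f$ and invoking the octahedral axiom produces an exact triangle
\begin{equation*}
\mathrm{cof}(\tilde{f}) \to \mathrm{cof}(f) \to \Sigma^\infty \Thom_Z(\caE|_Z \oplus Q_{n-1,d}).
\end{equation*}
The inductive hypothesis on $\GG(n-1,d-1)$ places $\mathrm{cof}(\tilde{f})$ in $\Sigma_T^{r+1}\SH(S)^\Delta_{\geq 0}$; the inductive hypothesis on $\GG(n-1,d)$ exhibits the third term as an extension of $\Sigma^{2(r+d),r+d}\unit$ by something in $\Sigma_T^{r+d+1}\SH(S)^\Delta_{\geq 0}$, and since $d \geq 1$ both factors lie in $\Sigma_T^{r+1}\SH(S)^\Delta_{\geq 0}$. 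Closure under extensions (Lemma \ref{lemma:cof}) then delivers $\mathrm{cof}(f) \in \Sigma_T^{r+1}\SH(S)^\Delta_{\geq 0}$. The principal obstacle is justifying that the lift $\tilde{f}$ can be chosen to agree with the canonical map for $\tilde{\caE}$ on $\GG(n-1,d-1)$ so that the inductive hypothesis is genuinely applicable; this amounts to comparing the two canonical trivialisations of $\caE$ at $\overline{\iota} \in Z$ and at the base point of $U$ supplied by the section of the affine bundle $U \to \GG(n-1,d-1)$, and can be handled by exhibiting an explicit $\A^1$-homotopy between these points inside $\GG(n,d)$ via elementary column operations carrying one trivialisation to the other.
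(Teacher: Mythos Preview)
Your argument is essentially correct (a small slip: the open complement $U$ is an $\A^{n-d}$-bundle over $\GG(n-1,d-1)$, not $\A^{d-1}$, though this does not affect the reasoning), but it takes the harder of the two available decompositions, and the paper's choice is worth knowing.

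You take the closed locus to be $Z=\iota(\GG(n-1,d))$, whose normal bundle is the universal quotient $Q_{n-1,d}$; this forces you to (a) enlarge the admissible class of summands to include $Q$ and $Q'$, and (b) confront a lifting problem, since the canonical point $\overline{\iota}$ lies in $Z$, not in $U$. You then have to manufacture an $\A^1$-homotopy between $\overline{\iota}$ and the base point of $U$ coming from $\GG(n-1,d-1)$, compatible with the trivialisations. This can be done, but it is the ``principal obstacle'' you identify, and it is avoidable.

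The paper uses the \emph{other} stratification: the closed locus is the image of $j\colon \GG(n-1,d-1)\hookrightarrow \GG(n,d)$ (classifying $\caK_{n-1,d-1}\subset\caO^{n-1}\subset\caO^n$), whose normal bundle is $\caK_{n-1,d-1}'$---already in the allowed class, so no strengthening is needed. The open complement is an affine bundle over $\GG(n-1,d)$ with zero section precisely $\iota$, so $\Sigma^\infty\Thom_U(\caE|_U)\simeq\Sigma^\infty\Thom(\iota^*\caE)$. Since $\overline{\iota}_{\GG(n,d)}=\iota\circ\overline{\iota}_{\GG(n-1,d)}$ by definition, the canonical map $\Sigma^{2r,r}\unit\to\Sigma^\infty\Thom(\caE)$ factors \emph{on the nose} through the first term of the Gysin triangle as the canonical map for $\iota^*\caE$. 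Lemma~\ref{lemma:cof} then applies directly: induction on $\GG(n-1,d)$ handles the cofiber $\B$ of $\Sigma^{2r,r}\unit\to\Sigma^\infty\Thom(\iota^*\caE)$, and induction on $\GG(n-1,d-1)$ places the third term $\Sigma^\infty\Thom(j^*\caE\oplus\caK_{n-1,d-1}')$ in $\Sigma_T^{r+(n-d)}\SH(S)^\Delta_{\ge 0}\subset\Sigma_T^{r+1}\SH(S)^\Delta_{\ge 0}$. No lifting, no base-point homotopy, no $Q$'s.
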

\begin{proof}
We outline an argument which is reminiscent of the one for \cite[Proposition 3.6]{Spitzweckslices}.
The first step of the proof consists of showing there is an exact triangle
$$
\Sigma^\infty\Thom(\iota^*\caE) \to 
\Sigma^\infty\Thom(\caE) \to
\Sigma^\infty\Thom(\caE_{\GG(n,d)} \oplus \caK_{n,d}') \to 
\Sigma^\infty\Thom(\iota^*\caE)[1]
$$
for the canonical map $\iota\colon\GG(n,d+1) \to \GG(n+1,d+1)$ (that classifies the subbundle $\caK_{n,d+1}\oplus \caO\subseteq \caO^{n+1}$).
By induction we deduce that the cofiber of the canonical map $\Sigma^{2r,r} \unit \to \Sigma^\infty \Thom(\iota^* \caE)$ lies in 
$\Sigma_T^{r+1} \SH(S)^{\Delta}_{\geq 0}$. 
Again by applying induction, 
it follows that $\Sigma^\infty\Thom(\caE_{\GG(n,d)} \oplus \caK_{n,d}')\in \Sigma_T^{r+j} \SH(S)^{\Delta}_{\geq 0}$, 
where $j=n-d>0$. 
The proposition follows now from Lemma \ref{lemma:cof}.
\end{proof}

Next we give a proof of Theorem \ref{theorem:MGLveff}.
\begin{proof}
We denote by $\xi_n=\colim_d \caK_{n+d,d}$ the universal vector bundle over the infinite Grassmannian $\BGL_n=\colim_d \GG(n+1,d)$, 
and write
$$
\MGL=\hocolim_n \Sigma^{-2n,-n} \Sigma^\infty \Thom(\xi_n)=\hocolim_{n,d} \Sigma^{-2n,-n} \Sigma^\infty \Thom(\caK_{n+d,d}).
$$
The unit map $\unit \to \MGL$ is in turn induced by the maps 
$$
\Sigma^{-2n,-n} \Sigma^\infty\Thom(\overline{\iota}^*\caK_{n+d,d})\to \Sigma^{-2n,-n}\Sigma^\infty \Thom(\caK_{n+d,d}).
$$ 
By Proposition \ref{proposition:555} the cofibers of these maps are contained in $\Sigma_T \SH(S)^{\Delta}_{\geq 0}$. 
Since cofiber sequences are compatible with homotopy colimits,
this finishes the proof.
\end{proof}

\begin{lemma}
\label{lemma:veffvanishing}
Let $\E\in\SH(S)^{\Veff}$ and suppose $S$ is the spectrum of a perfect field. 
Then the homotopy group $\pi_{p,q}(\E)=0$ for $p<q$.
\end{lemma}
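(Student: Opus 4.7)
My plan is to show that the full subcategory
\[
\caE_{<}\equiv\{\E\in\SH(S)\mid \pi_{p,q}(\E)=0\text{ for all }p<q\}
\]
contains every generator of $\SH(S)^{\Veff}$ and is closed under the operations used to build $\SH(S)^{\Veff}$ out of its generators. Since $\SH(S)^{\Veff}$ is by definition the smallest such subcategory, this will finish the proof.

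For the generators I would appeal to Morel's stable $\A^{1}$-connectivity theorem: over a perfect field $k$, the suspension spectrum $\Sigma^\infty X_+$ of any smooth $X$ of finite type over $\mathrm{Spec}(k)$ is connective in the homotopy $t$-structure on $\SH(S)$, which in the bigrading used throughout the paper translates to $\pi_{p,q}(\Sigma^\infty X_+)=0$ for all $p<q$. This is the only place where perfection of the base field is used, and it is the main analytical input; the remainder of the argument is formal.

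Next I would verify the two closure properties. Closure under extensions is immediate from the long exact sequence of homotopy groups: if $\AAA\to\E\to\B\to\AAA[1]$ is a triangle and $\pi_{p,q}(\AAA)=\pi_{p,q}(\B)=0$ for $p<q$, then the same holds for $\E$. For closure under homotopy colimits, note that it suffices to check closure under filtered homotopy colimits and under cofibres, since every homotopy colimit in $\SH(S)$ can be built from these. Cofibres are handled by the extension argument just given, while filtered homotopy colimits are handled by the compactness of the bigraded spheres $S^{p,q}$: the canonical map
\[
\colim_{i}\pi_{p,q}(\E_i)\to\pi_{p,q}(\hocolim_{i}\E_i)
\]
is an isomorphism, so a vanishing result on each $\E_i$ transfers to the colimit.

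Combining these observations shows that $\caE_{<}$ is a full subcategory of $\SH(S)$ which contains all $\Sigma^\infty X_+$ with $X\in\Sm$ and is closed under extensions and homotopy colimits. By the minimality clause in the definition of $\SH(S)^{\Veff}$, we conclude $\SH(S)^{\Veff}\subseteq \caE_{<}$, which is exactly the statement of the lemma. The only nontrivial ingredient is Morel's connectivity theorem; once that is in hand, the rest is soft $t$-structure bookkeeping.
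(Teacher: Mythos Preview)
Your argument is correct and follows essentially the same strategy as the paper: verify Morel's connectivity result for the generators $\Sigma^\infty X_+$ and then propagate the vanishing through extensions and homotopy colimits via long exact sequences. The only cosmetic differences are that the paper reduces arbitrary homotopy colimits to coproducts and homotopy pushouts (citing Lurie) rather than to filtered colimits and cofibres, and it phrases the base case for smooth \emph{projective} schemes, whereas you correctly invoke Morel's theorem for all smooth $X$, which is what the definition of $\SH(S)^{\Veff}$ actually requires.
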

\begin{proof}
For suspension spectra of smooth projective schemes of finite type the claimed vanishing is stated in \cite[\S5.3]{Morelpi0}.
Suppose $\E=\hocolim\,\E_i$ where every $\E_i$ satisfies the conclusion of the lemma.
Minor variations of \cite[Corollary 4.4.2.4, Proposition 4.4.2.6]{Lurie} allows us to assuming the homotopy colimit is either a coproduct or a homotopy pushout.
For coproducts the result is clear, while for homotopy colimits the corresponding long exact sequence of homotopy sheaves implies the vanishing.
For a general extension $\AAA \to \E \to \B \to \AAA[1]$, 
where the vanishing holds for $\AAA$ and $\B$, 
the corresponding long exact sequence of homotopy groups implies the result.
\end{proof}

We denote by $\SH(S)^{\proj}$ the full thick subcategory of $\SH(S)$ generated by the objects $\Sigma^i_{T}\Sigma^\infty X_{+}$ for $X \in\Sm$ a projective scheme and $i\in\Z$.
\begin{proposition}
\label{proposition:strongconvergence}
Suppose the base scheme $S$ is a perfect field.
Let 
$$
\cdots \to\E_{i+1} \to \E_{i} \to  \E_{i-1}  \to  \cdots  \to  \E
$$ 
be a tower of motivic spectra such that $ \hocolim\,\E_{i}= \E$ and denote the corresponding filtration quotients by $\QQ_{i}$.
Suppose that $\E_{i}\in\Sigma_T^i \SH(S)^{\Veff}$ and $X \in \SH(S)^{\proj}$. 
If for each fixed $n$ the groups $\Hom(X,\E_{i}[n])$ stabilize as $i$ tends to minus infinity, 
then the spectral sequence of the tower with $E_2$-term $\Hom(X,\QQ_{\ast}[\ast])$ and target graded group $\Hom(X,\E[\ast])$ converges strongly.
\end{proposition}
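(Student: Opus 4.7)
The plan is to show that in every fixed target degree the filtration induced on $\Hom(X,\E[*])$ by the tower has finite length; once that is established, strong convergence is automatic.

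First I would verify that $\Hom(X,-)$ commutes with the homotopy colimit $\hocolim_i \E_i=\E$. Each generator $\Sigma_T^{j}\Sigma^\infty Y_+$ of $\SH(S)^{\proj}$ is compact in $\SH(S)$, and compactness is preserved under passage to the thick closure, so $X$ is compact. Therefore
\[ \Hom(X,\E[n])\;\cong\;\colim_i\Hom(X,\E_i[n]), \]
and by the stabilization hypothesis this colimit is attained at some sufficiently negative index $i_0=i_0(n)$. Consequently the filtration on $\Hom(X,\E[n])$ by images from the $\Hom(X,\E_i[n])$ is exhausted after finitely many steps from below.

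Next I would use strong dualizability to translate the very effective hypothesis into a vanishing range at the other end. For $Y\in\Sm$ smooth projective of dimension $d$, fix an embedding $Y\hookrightarrow\PP^N$ with normal bundle $\nu$ of rank $N-d$. Motivic Atiyah duality identifies $D(\Sigma^\infty Y_+)$ with $\Sigma_T^{-N}\Sigma^\infty\Thom(\nu)$, and the Thom space of a vector bundle over a smooth scheme of finite type is a pointed smooth motivic space, so $\Sigma^\infty\Thom(\nu)\in\SH(S)^{\Veff}$. Closing under the thick operations defining $\SH(S)^{\proj}$, every such $X$ is strongly dualizable and there exists an integer $M=M(X)$ with $D(X)\in\Sigma_T^{-M}\SH(S)^{\Veff}$. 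Combining this with the hypothesis $\E_i\in\Sigma_T^{i}\SH(S)^{\Veff}$, Lemma \ref{lemma:veffclosedtensorproduct} gives $\E_i\wedge D(X)\in\Sigma_T^{i-M}\SH(S)^{\Veff}$. Rewriting $\Hom(X,\E_i[n])\cong\pi_{n,0}(\E_i\wedge D(X))$ by dualizability and invoking Lemma \ref{lemma:veffvanishing} applied to $\Sigma_T^{-(i-M)}(\E_i\wedge D(X))$ then forces $\Hom(X,\E_i[n])=0$ as soon as $i>n+M$.

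Combining the two previous paragraphs, for each fixed bidegree only finitely many of the groups $\Hom(X,\E_i[*])$ are nonzero, so only finitely many filtration quotients $\Hom(X,\QQ_i[*])$ contribute to $\Hom(X,\E[*])$. A finite length filtration on the abutment is automatically both Hausdorff and complete, and the $E_\infty$-page is its associated graded; this is precisely strong convergence of the spectral sequence of the tower. The main obstacle is the duality step: one needs a \emph{uniform} bound $M$ on the bidegree of the dual, which rests on motivic Atiyah duality together with the fact that Thom spectra of vector bundles on smooth finite type schemes lie in $\SH(S)^{\Veff}$, so that Lemma \ref{lemma:veffclosedtensorproduct} and Lemma \ref{lemma:veffvanishing} can be combined to produce the required upper vanishing.
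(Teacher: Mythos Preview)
Your argument is correct and is essentially the paper's proof written out in full: the paper also smashes with the Spanier-Whitehead dual $D(X)$ (citing \cite{Hu} for dualizability of smooth projective schemes), observes that $D(X)\wedge\E_i\in\Sigma_T^{i+n}\SH(S)^{\Veff}$ for a fixed shift, and then invokes Lemma~\ref{lemma:veffvanishing} to obtain the upper vanishing; the lower stabilization is the stated hypothesis. Two cosmetic remarks: the Thom space $\Thom(\nu)$ is not literally a smooth scheme but rather the cofiber of a map of smooth schemes, which is what places $\Sigma^\infty\Thom(\nu)$ in $\SH(S)^{\Veff}$; and under the duality identification one has $\Hom(X,\E_i[n])\cong\pi_{-n,0}(D(X)\wedge\E_i)$ rather than $\pi_{n,0}$, so the explicit bound should read $i>M-n$, but this does not affect the conclusion.
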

\begin{proof}
Smashing the tower with the Spanier-Whitehead dual $D(X)$ of $X$ produces a tower with terms $D(X)\wedge\E_{i}\in \Sigma_T^{i+n} \SH(S)^{\Veff}$ for a fixed integer $n$.
Hence we may assume that $X$ is the sphere spectrum because smooth projective schemes of finite type over $S$ are dualizable \cite{Hu}.
The spectral sequence obtained from the exact couple associated to the tower is strongly convergent due to Lemma \ref{lemma:veffvanishing}.
\end{proof}

For the complex cobordism spectrum $\MU$, 
fix an isomorphism $\MU_{\ast}\cong\Z[\x_{1},\x_{2},\dots]$ where $\vert \x_{i} \vert =i$ and consider the canonical map $\MU_{\ast}\to\MGL_{\ast}$.

\begin{proposition}
\label{proposition:kglisoquotient}
Over fields of characteristic zero there is a natural isomorphism from the quotient of $\MGL$ by the sequence $(\x_{i})_{i\geq 2}$ to $\kgl=f_{0}\KGL$.
\end{proposition}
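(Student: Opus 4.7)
The plan is to use a slice-theoretic argument based on the Hopkins-Morel theorem, which identifies $\MGL/(\x_{i})_{i\geq 1}$ with the motivic Eilenberg-MacLane spectrum $\MZ$ in characteristic zero. The characteristic zero hypothesis enters precisely through this identification.

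First I would construct a natural map $\phi\colon\MGL/(\x_{i})_{i\geq 2}\rto\kgl$. The canonical orientation $\MGL\rto\KGL$ is a map of motivic $E_{\infty}$-ring spectra, and by Theorem \ref{theorem:MGLveff} its source is very effective, hence a fortiori effective. By Theorem \ref{theorem:f0einfty} (applied to $\KGL$) together with the universal property of the effective cover for $E_{\infty}$-algebras, the orientation factors canonically through $f_{0}\KGL=\kgl$ as a map of $E_{\infty}$-algebras. The composite $\MGL\rto\kgl\rto\KGL$ sends $\x_{1}$ to the Bott element and each $\x_{i}$ for $i\geq 2$ to zero, by the motivic analogue of the classical Todd-genus computation $\MU_{\ast}\rto\KU_{\ast}$. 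Since $\pi_{2i,i}\kgl\rto\pi_{2i,i}\KGL$ is injective in nonnegative weights by the slice description of $\kgl$, the classes $\x_{i}$ for $i\geq 2$ vanish already in $\kgl$, and $\phi$ is well-defined as a map of $\Sigma^{\infty}\PP^{\infty}_{+}$-modules.

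Second I would show $\phi$ is an isomorphism by comparing slices. The Hopkins-Morel-Hoyois theorem gives $s_{i}\MGL\cong\Sigma^{2i,i}\MZ\otimes_{\Z}\MU_{2i}$ in characteristic zero, and since slice functors are strong monoidal on effective spectra, quotienting by the regular sequence $(\x_{j})_{j\geq 2}$ yields
\[
s_{i}\bigl(\MGL/(\x_{j})_{j\geq 2}\bigr)\cong\Sigma^{2i,i}\MZ\otimes_{\Z}\Z[\x_{1}]_{2i}\cong\Sigma^{2i,i}\MZ
\]
for $i\geq 0$ and zero for $i<0$. On the other hand, from the slice computation of $\KGL$ recorded in \cite{Levine-slices} and \cite{Voevodskymssktheory} one has $s_{i}\kgl=\Sigma^{2i,i}\MZ$ for $i\geq 0$ and zero otherwise. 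Both source and target are very effective, so concentrated in nonnegative slice degrees, and strong convergence of the slice spectral sequence for such spectra over a perfect field (Levine) promotes a slicewise equivalence to an isomorphism in $\SH(S)$.

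The main obstacle I expect is the compatibility check in the second step: $\phi$ must induce the identifications above on slices, not merely produce abstractly isomorphic slices. This reduces to tracking the Hopkins-Morel identification through the $E_{\infty}$-quotient construction and verifying that the image of $\x_{1}$ in $\pi_{2,1}\kgl$ coincides up to a unit with the Bott element $\beta$ used in (\ref{equation:Botttower}). The argument is the motivic counterpart of the topological fact that the connective $K$-theory spectrum is $\MU/(x_{i})_{i\geq 2}$; the bookkeeping needed to transfer this through the slice formalism of Section \ref{section:spectralsequencesformotivictwisted K-theory} and the $E_{\infty}$-refinement of Theorem \ref{theorem:f0einfty} is the technical heart of the proof.
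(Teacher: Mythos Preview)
Your overall strategy---construct a comparison map, check it on slices via Hopkins--Morel, and conclude by convergence of the slice spectral sequence---is exactly the paper's approach. The main difficulty is in the last step, and here you have a genuine circularity.

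You assert that ``both source and target are very effective'' and use this to get strong convergence. Very effectiveness of the quotient $\MGL/(\x_{i})_{i\geq 2}$ is fine: it follows from Theorem~\ref{theorem:MGLveff} since $\SH(S)^{\Veff}$ is closed under homotopy colimits. But very effectiveness of $\kgl$ is precisely Corollary~\ref{corollary:kglveryeffective}, which is deduced \emph{from} Proposition~\ref{proposition:kglisoquotient}. At this point in the argument you have no independent reason to know $\kgl\in\SH(S)^{\Veff}$; the functor $f_{0}$ only lands in $\SH(S)^{\eff}$. The paper avoids this by treating the two sides asymmetrically: for the quotient it invokes Theorem~\ref{theorem:MGLveff} and Proposition~\ref{proposition:strongconvergence}, while for $\kgl$ it appeals to the separate convergence result \cite[Proposition~5.5]{Voevodskymssktheory} (together with Levine's proof of Voevodsky's Conjecture~4). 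You need to make the same distinction.

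Two minor points. First, your construction of $\phi$ takes an unnecessary detour through the injectivity of $\pi_{2i,i}\kgl\to\pi_{2i,i}\KGL$. It is simpler to do what the paper does: kill the $\x_{i}$ already in $\KGL$ (where they obviously vanish), and then observe that the effective spectrum $\MGL/(\x_{i})_{i\geq 2}$ maps to $\KGL$ uniquely through $\kgl=f_{0}\KGL$ by the universal property of $f_{0}$. (Your injectivity claim is in fact correct---indeed $\pi_{2i,i}\kgl\to\pi_{2i,i}\KGL$ is an isomorphism for $i\geq 0$ by that same universal property, since $S^{2i,i}$ is effective---but invoking ``the slice description'' for it is misleading.) Second, the phrase ``slice functors are strong monoidal on effective spectra'' is not accurate as stated; the slice identification you want is the content of \cite[Proposition~5.4]{Spitzweckslices}, which the paper simply cites.
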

\begin{proof}
The orientation map $\MGL\to\KGL$ sends $\x_{i}\in\MGL_{2i,i}$ to $0$ in $\KGL_{2i,i}$ for $i\geq 2$.
Hence there is a naturally induced map from the quotient of $\MGL$ by the sequence $(\x_{i})_{i\geq 2}$ to $\KGL$.
Since the quotient $\MGL/(\x_{i})_{i\geq 2}$ is an effective spectrum we obtain the desired map to $\kgl$. 
As shown in \cite[Proposition 5.4]{Spitzweckslices} this map induces an isomorphism on all slices.
(The proof employs the work of Hopkins-Morel on quotients of $\MGL$.)
For any $X$ of $\SH(S)^{\proj}$ we may consider the spectral sequences obtained by taking homs into the respective slice filtrations of $\kgl$ and the quotient.
Theorem \ref{theorem:MGLveff} and Proposition \ref{proposition:strongconvergence} ensure that the spectral sequence for the quotient is strongly convergent.
For $\kgl$,
strong convergence holds by \cite[Proposition 5.5]{Voevodskymssktheory}.
(Note that Conjecture 4 in \cite{Voevodskymssktheory} is proven in \cite{Levine-slices}, 
cf.~the introduction in loc.~cit.~for a discussion.)
Our claim follows now by comparing the target graded groups of these spectral sequences.
\end{proof}

\begin{corollary} \label{corollary:kglveryeffective}
Over fields of characteristic zero the connective K-theory spectrum $\kgl$ is very effective.
\end{corollary}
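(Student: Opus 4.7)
The plan is to combine Proposition~\ref{proposition:kglisoquotient} with Theorem~\ref{theorem:MGLveff} and the closure properties built into the definition of $\SH(S)^{\Veff}$. Since Proposition~\ref{proposition:kglisoquotient} identifies $\kgl$ with the iterated quotient $\MGL/(\x_i)_{i\geq 2}$, it is enough to show that this quotient is very effective.

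First I would check that $\SH(S)^{\Veff}$ is closed under Tate suspension $\Sigma_{T}=\Sigma^{2,1}$. The Tate object $T\simeq\Sigma^{\infty}\PP^{1}$ is the cofiber of the canonical map $\unit\to\Sigma^{\infty}\PP^{1}_{+}$, whose source and target are suspension spectra of smooth schemes over $S$ and hence very effective by definition. Closure of $\SH(S)^{\Veff}$ under homotopy colimits then puts $T$ in $\SH(S)^{\Veff}$, and Lemma~\ref{lemma:veffclosedtensorproduct} promotes this to the statement that $\Sigma^{2i,i}$ preserves very effective objects for every $i\geq 0$. Combined with Theorem~\ref{theorem:MGLveff}, this places every $\Sigma^{2i,i}\MGL$ in $\SH(S)^{\Veff}$.

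Next I would build the quotient iteratively. Working with a strict $\MGL$-module structure on the successive quotients, multiplication by $\x_{n}\in\MGL_{2n,n}$ produces a map of $\MGL$-modules $\x_{n}\colon\Sigma^{2n,n}M_{n-1}\to M_{n-1}$, where $M_{1}=\MGL$ and $M_{n}$ is defined inductively as the cofiber
\begin{equation*}
\Sigma^{2n,n}M_{n-1}\xrightarrow{\x_{n}}M_{n-1}\to M_{n}.
\end{equation*}
An induction on $n$ shows that each $M_{n}$ is very effective: the base case is Theorem~\ref{theorem:MGLveff}, and the inductive step combines the previous paragraph with closure of $\SH(S)^{\Veff}$ under homotopy colimits (cofibers being a special case). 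Since $\MGL/(\x_{i})_{i\geq 2}\simeq\hocolim_{n}M_{n}$, one further application of closure under homotopy colimits gives very effectiveness of the full quotient, and Proposition~\ref{proposition:kglisoquotient} then identifies it with $\kgl$.

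I do not anticipate a serious obstacle here: every ingredient has already been packaged in the earlier results. The only mild issue is arranging the multiplications by $\x_{n}$ coherently into a sequential diagram of $\MGL$-modules, but a strict $E_{\infty}$-model for $\MGL$ (the same framework invoked in the paper's treatment of $\kgl=f_{0}\KGL$ via Theorem~\ref{theorem:f0einfty}) makes this routine.
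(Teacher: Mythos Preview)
Your proposal is correct and follows essentially the same approach as the paper's own proof, which is the one-line ``Combine Theorem~\ref{theorem:MGLveff} and Proposition~\ref{proposition:kglisoquotient} with the fact that very effectiveness is preserved under homotopy colimits.'' You have simply unpacked this sentence: the iterated cofiber construction and the sequential homotopy colimit are exactly the homotopy colimits implicit in the paper's argument, and your explicit check that $\Sigma_{T}$ preserves $\SH(S)^{\Veff}$ (via Lemma~\ref{lemma:veffclosedtensorproduct}) is a detail the paper leaves to the reader.
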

\begin{proof}
Combine Theorem \ref{theorem:MGLveff} and Proposition \ref{proposition:kglisoquotient} with the fact that very effectiveness is preserved under homotopy colimits.
\end{proof}

\begin{lemma}
\label{lemma:veryeffective}
The motivic spectrum $\Sigma^\infty \X^\tau_{+} \wedge_{\Sigma^\infty \PP^\infty_+}\kgl$ is very effective.
\end{lemma}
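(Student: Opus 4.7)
The plan is to compute the derived smash product by a two-sided bar construction and verify that it is a homotopy colimit of very effective spectra. Concretely, since $\Sigma^\infty \PP^\infty_+$ carries an $E_\infty$-model in the flat positive model structure of motivic symmetric spectra, the derived smash product $\Sigma^\infty \X^\tau_+ \wedge^L_{\Sigma^\infty \PP^\infty_+} \kgl$ is computed up to weak equivalence by the geometric realization $|B_\bullet(\Sigma^\infty \X^\tau_+, \Sigma^\infty \PP^\infty_+, \kgl)|$, whose $n$-th simplicial level is the absolute smash product
$$\Sigma^\infty \X^\tau_+ \wedge (\Sigma^\infty \PP^\infty_+)^{\wedge n} \wedge \kgl.$$

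Next I would verify very effectiveness at every simplicial level. The motivic space $\X^\tau$, defined as a homotopy pullback in $\Spc$, is itself a motivic space, hence a homotopy colimit of representable presheaves (i.e., of smooth schemes of finite type over $S$). Because the motivic suspension spectrum functor preserves homotopy colimits, $\Sigma^\infty \X^\tau_+$ lies in $\SH(S)^{\Veff}$ by the very definition of that subcategory. The same observation applied to $\Sigma^\infty \PP^\infty_+ \simeq \hocolim_n \Sigma^\infty \PP^n_+$ shows that this motivic spectrum is very effective. By Corollary \ref{corollary:kglveryeffective}, the connective K-theory spectrum $\kgl$ is very effective. An iterated application of Lemma \ref{lemma:veffclosedtensorproduct} then guarantees that every simplicial level of the bar construction is very effective.

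To conclude, geometric realization is a particular homotopy colimit, so the closure of $\SH(S)^{\Veff}$ under homotopy colimits---built into the definition---delivers very effectiveness of $\Sigma^\infty \X^\tau_+ \wedge_{\Sigma^\infty \PP^\infty_+}\kgl$. The main obstacle is the first paragraph: checking that the bar construction really computes the derived smash product over $\Sigma^\infty \PP^\infty_+$. This requires a sufficiently rigid strictly commutative ring model for $\Sigma^\infty \PP^\infty_+$ together with a cofibrant $\Sigma^\infty \PP^\infty_+$-module replacement; both are available via the Quillen equivalence between $E_\infty$- and commutative algebras from \cite{Hornbostel} and the Bott-tower presentation for $\kgl$ and $\KGL$ recorded in \cite{Rondigs-Spitzweck-Ostvar}. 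Once this identification is in hand, the very effective conclusion is essentially formal.
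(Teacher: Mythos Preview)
Your argument is correct and follows essentially the same route as the paper: both compute the derived smash product via the bar simplicial object with levels $\Sigma^\infty \X^\tau_+ \wedge (\Sigma^\infty \PP^\infty_+)^{\wedge n}\wedge\kgl$, invoke Corollary \ref{corollary:kglveryeffective} and Lemma \ref{lemma:veffclosedtensorproduct} to see each level is very effective, and conclude by closure of $\SH(S)^{\Veff}$ under homotopy colimits. Your write-up is slightly more explicit about why $\Sigma^\infty \X^\tau_+$ is very effective and about the model-categorical justification for the bar resolution, but the strategy is identical.
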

\begin{proof}
For $n\geq 0$, 
Corollary \ref{corollary:kglveryeffective} shows the smash product $\Sigma^{\infty}\X^{\tau}_{+} \wedge (\Sigma^{\infty} \PP^{\infty}_{+})^{\wedge n}\wedge\kgl$ is very effective 
since $\SH(S)^{\Veff}$ is closed under smash products in $\SH(S)$ according to Lemma \ref{lemma:veffclosedtensorproduct}.
When $n$ varies, 
\begin{equation*}
n\mapsto
\Sigma^{\infty}\X^{\tau}_{+} \wedge (\Sigma^{\infty} \PP^{\infty}_{+})^{\wedge n}\wedge\kgl
\end{equation*}
defines a simplicial object in motivic symmetric spectra.
Its homotopy colimit is very effective and identifies with the smash product $\Sigma^\infty \X^\tau_{+} \wedge_{\Sigma^\infty \PP^\infty_+}\kgl$.
\end{proof}

We denote by $\Ho(\Sigma^\infty \PP^\infty_+ - \Mod)^\proj$ the full thick subcategory of the homotopy category $\Ho(\Sigma^\infty \PP^\infty_+ - \Mod)$ generated by the 
objects $\Sigma^i_T\Sigma^\infty \PP^\infty_+ \wedge\Sigma^\infty X_+$ for $X \in \Sm$ projective and $i\in\Z$.
\begin{lemma}
\label{lemma:veryeffective2}
Suppose $\Sigma^\infty \X^\tau_+$ is an object of $\Ho(\Sigma^\infty \PP^\infty_+ - \Mod)^\proj$.
Then there exists an integer $n\in\Z$ such that $\uHom_{\Sigma^\infty \PP^\infty_+}(\Sigma^\infty \X^\tau_+,\kgl)$ lies in $\Sigma_T^n \SH(S)^\Veff$.
\end{lemma}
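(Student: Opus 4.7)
My approach is a thick-subcategory reduction to the projective generators of $\Ho(\Sigma^\infty\PP^\infty_+ - \Mod)^\proj$, followed by motivic Spanier-Whitehead duality on each generator. Concretely, I would first introduce the full subcategory
\[
\mathcal{C} = \bigl\{\, M \in \Ho(\Sigma^\infty\PP^\infty_+ - \Mod) \ : \ \exists\, n,\ \uHom_{\Sigma^\infty\PP^\infty_+}(M,\kgl)\in\Sigma_T^n\SH(S)^\Veff \,\bigr\}
\]
and check that $\mathcal{C}$ is a thick subcategory closed under $\Sigma_T^{\pm 1}$. Because $\uHom_{\Sigma^\infty\PP^\infty_+}(-,\kgl)$ is a triangulated contravariant functor, closure under triangles reduces to the observation that the union $\bigcup_n\Sigma_T^n\SH(S)^\Veff\subseteq\SH(S)$ is closed under extensions, finite wedges, retracts, and arbitrary bigraded (de)suspensions. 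The only slightly delicate point is closure under $\Sigma_s^{-1}$, which follows from the factorization $\Sigma_s=\Sigma_T\cdot\Sigma^{-1,-1}$, the fact that $\Sigma^\infty\G$ is very effective, and Lemma \ref{lemma:veffclosedtensorproduct}.

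Since $\mathcal{C}$ is thick and closed under Tate shifts, and the hypothesis places $\Sigma^\infty\X^\tau_+$ in $\Ho(\Sigma^\infty\PP^\infty_+ - \Mod)^\proj$, it suffices to verify that $\Sigma^\infty\PP^\infty_+\wedge\Sigma^\infty X_+\in\mathcal{C}$ for every smooth projective $S$-scheme $X$. The free-forget adjunction provides an identification
\[
\uHom_{\Sigma^\infty\PP^\infty_+}(\Sigma^\infty\PP^\infty_+\wedge\Sigma^\infty X_+,\kgl)\;\cong\;\uHom(\Sigma^\infty X_+,\kgl),
\]
and by motivic Spanier-Whitehead duality \cite{Hu}, $\Sigma^\infty X_+$ is strongly dualizable with dual $D(\Sigma^\infty X_+)$, so the right-hand side is $D(\Sigma^\infty X_+)\wedge\kgl$. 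By Corollary \ref{corollary:kglveryeffective} (placing $\kgl$ in $\SH(S)^\Veff$) and Lemma \ref{lemma:veffclosedtensorproduct}, it is enough to exhibit a $T$-effective bound on the dual alone, i.e.~to show $D(\Sigma^\infty X_+)\in\Sigma_T^n\SH(S)^\Veff$ for some $n$ depending on $X$.

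This final step is where real work is needed. The plan is to fix a closed immersion $X\hookrightarrow\PP^N$ with normal bundle $\nu$ of rank $N-d$, invoke motivic Atiyah duality to identify $D(\Sigma^\infty X_+)$ with a $\Sigma_T^{-N}$-desuspension of $\Sigma^\infty\Thom(\nu)$, and then use Morel-Voevodsky homotopy purity to realize $\Thom(\nu)$ as the cofibre of $(\PP^N\smallsetminus X)_+\to\PP^N_+$. Closure of $\SH(S)^\Veff$ under homotopy colimits then places $\Sigma^\infty\Thom(\nu)$ in $\SH(S)^\Veff$, so that $D(\Sigma^\infty X_+)\in\Sigma_T^{-N}\SH(S)^\Veff$ and the conclusion follows by smashing with $\kgl$. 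I expect the main obstacle to be the clean formulation of motivic Atiyah duality and, in particular, its compatibility with the very-effective filtration; everything else is formal triangulated-category manipulation.
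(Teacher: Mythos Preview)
Your proposal is correct and follows essentially the same approach as the paper's own proof: reduce to the generators of $\Ho(\Sigma^\infty\PP^\infty_+ - \Mod)^\proj$ via a thick-subcategory argument, apply the free--forget adjunction to identify the internal hom with $D(\Sigma^\infty X_+)\wedge\kgl$, and use that $D(\Sigma^\infty X_+)$ lies in some $\Sigma_T^{n}\SH(S)^\Veff$ together with Corollary~\ref{corollary:kglveryeffective}. The paper handles the last point by a direct citation of \cite[Appendix]{Hu}, while you unpack it via motivic Atiyah duality and homotopy purity; likewise your careful check that $\bigcup_n\Sigma_T^n\SH(S)^\Veff$ is closed under $\Sigma_s^{-1}$ makes explicit what the paper compresses into ``the general case follows routinely by taking cones and direct summands.''
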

\begin{proof}
Note first that 
for every $X \in \Sm$, $i,j \in \Z$, 
there is an $n_{1} \in \Z$ such that $\Sigma^{i,j} \Sigma^\infty X_+ \in \Sigma_T^{n_{1}} \SH(S)^\Veff$.
For $X$ projective we get that $D(\Sigma^\infty X_+) \in \Sigma_T^{n_{2}} \SH(S)^\Veff$ for some $n_{2}\in\Z$ by \cite[Appendix]{Hu} 
and we conclude that 
$$
\uHom_{\Sigma^\infty \PP^\infty_+}(\Sigma^{i,j}\Sigma^\infty \PP^\infty_+ \wedge\Sigma^\infty X_+,\kgl)
\cong 
\Sigma^{-i,-j} D(\Sigma^\infty X_+) \wedge \kgl
$$ 
lies in $\Sigma_T^{n_{3}} \SH(S)^\Veff$ for some $n_{3}\in\Z$ by Corollary \ref{corollary:kglveryeffective}.
This shows the result for all of the generators of $\Ho(\Sigma^\infty \PP^\infty_+ - \Mod)^\proj$.
The general case follows routinely by taking cones and direct summands.
\end{proof}

\begin{theorem}
\label{theorem:convergencetwistedKss}
Let $S$ be a field of characteristic zero.
Suppose $\Sigma^\infty \X_{+}$ is compact, equivalently strongly dualizable, equivalently an object of $\SH(S)^{\proj}$.
Then the motivic twisted K-theory spectral sequence 
\begin{equation*}
\MZ_{\ast}(\Sigma^\infty \X_{+} )
\Longrightarrow
\KGL^{\tau}_{\ast}(\X)
\end{equation*}
in (\ref{equation:twistedKss1}) is strongly convergent.
\end{theorem}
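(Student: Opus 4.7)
The plan is to apply Proposition \ref{proposition:strongconvergence} to the tower (\ref{equation:tower1}) with $\E_i = \Sigma^\infty\X^\tau_+\wedge_{\Sigma^\infty\PP^\infty_+}\Sigma^{2i,i}\kgl$ and $X = \Sigma^\infty\X_+$. The hypothesis $X\in\SH(S)^{\proj}$ is the standing assumption of the theorem, and since $\hocolim_i f_i\KGL = \KGL$ in $\SH(S)$ and the smash product over $\Sigma^\infty\PP^\infty_+$ commutes with homotopy colimits, the colimit of the tower equals $\KGL^\tau$. By Theorem \ref{theorem:filtrationquotient} the filtration quotients are $\Sigma^{2i,i}(\MZ\wedge\Sigma^\infty\X_+)$, whose bigraded homotopy is motivic homology of $\X$, so the resulting spectral sequence matches (\ref{equation:twistedKss1}).

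The key structural input is the very-effectiveness hypothesis of Proposition \ref{proposition:strongconvergence}, namely $\E_i\in\Sigma_T^i\SH(S)^\Veff$. This follows from Lemma \ref{lemma:veryeffective}, which places $\E_0 = \Sigma^\infty\X^\tau_+\wedge_{\Sigma^\infty\PP^\infty_+}\kgl$ in $\SH(S)^\Veff$, and then by applying the Tate suspension $\Sigma_T^i = \Sigma^{2i,i}$. This is precisely where the characteristic-zero assumption on $S$ enters: Lemma \ref{lemma:veryeffective} rests on very effectiveness of $\kgl$ (Corollary \ref{corollary:kglveryeffective}), whose proof goes through the Hopkins--Morel style presentation of Proposition \ref{proposition:kglisoquotient} and is currently only available in characteristic zero.

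For the remaining stabilization clause of Proposition \ref{proposition:strongconvergence}, the idea is to exploit that $X\in\SH(S)^{\proj}$ is strongly dualizable by \cite[Appendix]{Hu}, with dual $D(X)$ lying in $\Sigma_T^{m}\SH(S)^\Veff$ for some fixed integer $m$ (since this holds on the generators of $\SH(S)^{\proj}$ and is preserved by cones and retracts up to a shift). Smashing the tower with $D(X)$ and invoking Lemma \ref{lemma:veffclosedtensorproduct} then gives $D(X)\wedge\E_i\in\Sigma_T^{i+m}\SH(S)^\Veff$, so Lemma \ref{lemma:veffvanishing} furnishes a uniform vanishing range for $\pi_n\bigl(D(X)\wedge\E_i\bigr)\cong\Hom(X,\E_i[n])$ depending only on $n$ and $m$; this produces the required stabilization, exactly as in the proof of Proposition \ref{proposition:strongconvergence}. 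Proposition \ref{proposition:strongconvergence} then delivers strong convergence of (\ref{equation:twistedKss1}). The main obstacle is the very-effectiveness step; once it is secured via Lemma \ref{lemma:veryeffective}, the remaining argument is essentially a formal repackaging of the hypotheses of Proposition \ref{proposition:strongconvergence}.
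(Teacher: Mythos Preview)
Your proposal contains two related errors that prevent it from establishing the theorem.

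First, you have misidentified the object $X$ in Proposition \ref{proposition:strongconvergence}. The spectral sequence (\ref{equation:twistedKss1}) is obtained by applying homotopy groups $\pi_{*,q}$ to the tower (\ref{equation:tower1}); its target is $\KGL^{\tau}_{\ast}(\X)=\pi_{\ast}\KGL^{\tau}=\Hom(S^{0,q},\KGL^{\tau}[\ast])$ and its $E_2$-page consists of groups $\pi_{\ast}\QQ_i(\X^{\tau})=\MZ_{\ast}(\Sigma^{\infty}\X_{+})$. Thus one must take $X=S^{0,q}$ in Proposition \ref{proposition:strongconvergence}, as the paper does. With your choice $X=\Sigma^{\infty}\X_{+}$ the resulting spectral sequence has target $\Hom(\Sigma^{\infty}\X_{+},\KGL^{\tau}[\ast])$ and $E_2$-terms $\Hom(\Sigma^{\infty}\X_{+},\QQ_i(\X^{\tau})[\ast])$, which is \emph{not} (\ref{equation:twistedKss1}). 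The hypothesis $\Sigma^{\infty}\X_{+}\in\SH(S)^{\proj}$ does not enter as the test object $X$ of the proposition; it enters only in verifying the stabilization clause.

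Second, your stabilization argument goes in the wrong direction. From $D(X)\wedge\E_i\in\Sigma_T^{i+m}\SH(S)^{\Veff}$ and Lemma \ref{lemma:veffvanishing} you only obtain vanishing of $\Hom(X,\E_i[n])$ for $i\gg 0$. This is precisely the input that the \emph{proof} of Proposition \ref{proposition:strongconvergence} already extracts from the very-effectiveness hypothesis; it says nothing about the stabilization clause, which requires that the groups $\Hom(X,\E_i[n])$ become eventually isomorphic as $i\to -\infty$. The paper handles this by a separate argument: since $\QQ_i(\X^{\tau})\cong\Sigma^{2i,i}\MZ\wedge\Sigma^{\infty}\X_{+}=s_i\KGL\wedge\Sigma^{\infty}\X_{+}$, one has $\pi_{n,q}\QQ_i(\X^{\tau})\cong\Hom(\Sigma^{n,q}D(\Sigma^{\infty}\X_{+}),s_i\KGL)$, and these vanish for $i\ll 0$ because the untwisted slice tower $f_i\KGL$ stabilizes on any compact object (equivalently, motivic cohomology of a smooth scheme vanishes in sufficiently negative weight). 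This is where the strong dualizability of $\Sigma^{\infty}\X_{+}$ is actually used.

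In summary: set $X=S^{0,q}$, keep your very-effectiveness verification via Lemma \ref{lemma:veryeffective}, and replace your stabilization paragraph by the paper's duality argument reducing to the untwisted slice filtration of $\KGL$.
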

\begin{proof}
The proof follows by reference to Proposition \ref{proposition:strongconvergence} in the case when $X=S^{0,q}$.
Two assumptions need to be checked,
i.e.~$E_{i}=\Sigma^{2i,i}\Sigma^\infty \X_{+}\wedge_{\Sigma^{\infty} \PP^{\infty}_{+}}\kgl\in\Sigma^{i}_{\PP^{1}}\SH(S)^{\Veff}$ and stability.
Very effectiveness and Lemma \ref{lemma:veryeffective} verify that the first assumption holds.
Second,
the stabilization condition is equivalent to the fact that for a fixed $n$, 
the group $\pi_{n,q}\QQ_{i}(\X^{\tau})\neq 0$ for only finitely many $i$. 
The latter follows from the corresponding statement in the untwisted case because $\Sigma^\infty \X_{+}$ is strongly dualizable.
Namely, 
letting $i \ll 0$ vary, 
the groups $\Hom(\Sigma^{n,q} D(\Sigma^\infty \X_{+}),f_{i}\KGL)$ become isomorphic.
\end{proof}

\begin{theorem}
Let $S$ be a field of characteristic zero.
Suppose $\Sigma^\infty \X^\tau_{+}$ is compact in $\Ho(\Sigma^\infty \PP^\infty_+ - \Mod)$,
equivalently strongly dualizable.
Then the motivic twisted K-theory spectral sequence 
\begin{equation*}
\MZ^{\ast}(\Sigma^\infty \X_{+} )
\Longrightarrow
\KGL_{\tau}^{\ast}(\X)
\end{equation*}
in (\ref{equation:twistedKss2}) is strongly convergent.
\end{theorem}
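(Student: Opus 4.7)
The plan is to run exactly the same argument as for Theorem \ref{theorem:convergencetwistedKss}, with Lemma \ref{lemma:veryeffective} replaced by its cohomological counterpart Lemma \ref{lemma:veryeffective2}, and to apply Proposition \ref{proposition:strongconvergence} to the cohomological tower (\ref{equation:tower2}). Recall from Theorem \ref{theorem:filtrationquotient} that the filtration quotients of this tower are the $(2i,i)$-suspensions of $\uHom(\Sigma^\infty \X_+,\MZ)$, giving the $E_2$-term $\MZ^{\ast}(\Sigma^\infty \X_+)$.

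First, I would identify the $i$-th term of the tower as
\begin{equation*}
\E_i
=
\uHom_{\Sigma^\infty \PP^\infty_+}(\Sigma^\infty \X^\tau_+,\Sigma^{2i,i}\kgl)
\cong
\Sigma^{2i,i}\,\uHom_{\Sigma^\infty \PP^\infty_+}(\Sigma^\infty \X^\tau_+,\kgl),
\end{equation*}
so that the tower is the $(2i,i)$-suspension of a single motivic spectrum. The compactness hypothesis on $\Sigma^\infty \X^\tau_+$, combined with the fact that over a field of characteristic zero the compact generators of $\Ho(\Sigma^\infty \PP^\infty_+ - \Mod)$ may be taken to be of the form $\Sigma^i_T \Sigma^\infty \PP^\infty_+ \wedge \Sigma^\infty X_+$ with $X$ smooth projective (thanks to Hironaka's resolution of singularities), places $\Sigma^\infty \X^\tau_+$ inside the thick subcategory $\Ho(\Sigma^\infty \PP^\infty_+ - \Mod)^{\proj}$. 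Lemma \ref{lemma:veryeffective2} then supplies an integer $n$ with $\uHom_{\Sigma^\infty \PP^\infty_+}(\Sigma^\infty \X^\tau_+,\kgl) \in \Sigma_T^n \SH(S)^{\Veff}$, and hence $\E_i \in \Sigma_T^{i+n}\SH(S)^{\Veff}$. Up to an overall index shift (which merely re-indexes the spectral sequence and does not affect its convergence), this is the very effectiveness hypothesis of Proposition \ref{proposition:strongconvergence}.

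Second, I would verify the stability condition: for each fixed $(m,q)$, the groups $\Hom(S^{m,q},\E_i)$ should stabilize as $i \to -\infty$. Using the strong dualizability of $\Sigma^\infty \X^\tau_+$ in $\Ho(\Sigma^\infty \PP^\infty_+ - \Mod)$, one rewrites
\begin{equation*}
\E_i
\cong
D_{\Sigma^\infty \PP^\infty_+}(\Sigma^\infty \X^\tau_+) \wedge_{\Sigma^\infty \PP^\infty_+} f_i\KGL,
\end{equation*}
so that $\Hom(S^{m,q},\E_i)$ becomes a $\Hom$-group from a fixed compact object into $f_i \KGL$. Stabilization then reduces to the untwisted case and follows from strong convergence of the slice filtration on $\KGL$ over a perfect field, as used in Theorem \ref{theorem:convergencetwistedKss} via \cite{Voevodskymssktheory} together with the input of \cite{Levine-slices}.

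The main obstacle is the first step: matching the compactness hypothesis with the input required by Lemma \ref{lemma:veryeffective2}, which is phrased in terms of the thick subcategory $\Ho(\Sigma^\infty \PP^\infty_+ - \Mod)^{\proj}$ generated by suspension spectra of smooth \emph{projective} schemes. This identification of compact objects is precisely where the characteristic-zero assumption on $S$ is genuinely used, exactly as it is used to obtain Corollary \ref{corollary:kglveryeffective} via Proposition \ref{proposition:kglisoquotient}. Once this identification is in hand, both the very effectiveness input and the stability input for Proposition \ref{proposition:strongconvergence} are formal consequences of the ingredients assembled above.
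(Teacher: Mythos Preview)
Your proposal is correct and follows essentially the same route as the paper: apply Proposition \ref{proposition:strongconvergence} to the tower (\ref{equation:tower2}), invoking Lemma \ref{lemma:veryeffective2} for the very-effectiveness input and reducing the stabilization condition to the untwisted slice filtration of $\KGL$. The paper's own proof is two sentences; the only point it adds that you omit is an appeal to Lemma \ref{lemma:P00topoint} to deduce that $\Sigma^\infty\X_+$ is itself strongly dualizable in $\SH(S)$, which lets the stabilization argument from Theorem \ref{theorem:convergencetwistedKss} (phrased in terms of the untwisted filtration quotients and $D(\Sigma^\infty\X_+)$) be reused verbatim---you instead dualize $\Sigma^\infty\X^\tau_+$ over $\Sigma^\infty\PP^\infty_+$ directly, which is an equally valid reduction to the same untwisted input.
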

\begin{proof} We first note that the assumptions imply that $\Sigma^\infty \X_+$ is strongly dualizable by Lemma \ref{lemma:P00topoint}.
The proof proceeds now along the lines of the proof of Theorem \ref{theorem:convergencetwistedKss} with a reference to Lemma \ref{lemma:veryeffective2} for very effectiveness.
\end{proof}

We end this section by discussing the closely related approach of the slice spectral sequence for $\KGL^\tau$.
Recall that the slices of any motivic spectrum fit into the slice tower constructed by Voevodsky in \cite{Voevodsky-slices}.

An identification of the zero-slice of $\KGL^{\tau}$ would in turn determine all the slices $s_{n}\KGL^{\tau}$ by 
$(2,1)$-periodicity, 
i.e.~there is an isomorphism in the motivic stable homotopy category
\begin{equation*}
s_{n}\KGL^{\tau}
\cong
\Sigma^{2n,n}s_{0}\KGL^{\tau}.
\end{equation*}
This follows from the evident $\KGL$-module structure on $\KGL^{\tau}$ and the Bott periodicity isomorphism $\beta\colon\Sigma^{2,1}\KGL\rto\KGL$ furnishing the composite isomorphism 
\begin{equation*}
\Sigma^{2,1}\KGL^{\tau}\rto
\Sigma^{2,1}\KGL^{\tau}\wedge\KGL\rto
\KGL^{\tau}\wedge\Sigma^{2,1}\KGL\rto
\KGL^{\tau}\wedge\KGL\rto
\KGL^{\tau}.
\end{equation*}
The same comments apply to $\KGL_{\tau}$.
If the base scheme is a perfect field, 
then all of the slices $s_{n}\KGL^{\tau}$ and $s_{n}\KGL_{\tau}$ are in fact motives, 
i.e.~modules over the integral motivic Eilenberg-MacLane spectrum $\MZ$, 
cf.~\cite{Levine-slices}, \cite{Pelaez}, \cite{Rondigs-Ostvar1}, \cite{Rondigs-Ostvar2}, \cite{Voevodsky-zeroslices}, \cite{VRO}.
However,
except for example when $\X$ is the point and $\tau$ the trivial twist, 
the slice spectral sequences cannot coincide with the spectral sequence constructed earlier in this section.
Indeed the corresponding filtration quotients are different because by weight considerations the smash product $\X \wedge \MZ$ is not a zero slice in general.

\section{Further problems and questions}
\label{section:furtherquestionsandproblems}
We end the main body of the paper by discussing some problems and questions related to motivic twisted K-theory.

A pressing question left open in the previous section is to identify the $d_{1}$-differentials in the spectral 
sequences.
\begin{problem}
\label{problem:differentials}
Express the $d_{1}$-differentials in the slice spectral sequence for $\KGL_{\tau}$ in terms of motivic Steenrod squares and the twist $\tau$.
\end{problem}
\begin{remark}
The $d_{3}$-differentials in the Atiyah-Hirzebruch spectral sequence for twisted K-theory were identified by Atiyah and Segal in \cite{Atiyah-Segal2} as the difference 
between $\Sq^{3}$ and the twisting.
In the same paper the higher differentials are determined in terms of Massey products.
One may ask if also the higher differentials in the slice spectral sequence can be described in terms of Massey products.
\end{remark}

\begin{problem}
\label{problem:products}
For twists $\tau$ and $\tau^{\prime}$ construct products 
$$
\KGL_{\tau}
\wedge
\KGL_{\tau^{\prime}}
\to
\KGL_{\tau+\tau^{\prime}}
$$
and investigate its properties.
\end{problem}

In Remark \ref{remark:BBGtrivial} we noted that all the motivic twisted K-groups of the identity map of $\B\BG$ are trivial.
More generally,
if $\tau$ is any twisting of $\B\BG$ one may ask if the motivic $\tau$-twisted K-groups are trivial.
This is the content of the next problem asking when $\B\BG$ is a point for motivic twisted K-theory.
\begin{problem}
\label{problem:point}
For which twists of $\B\BG$ is the associated motivic twisted K-theory trivial? 
\end{problem}
\begin{remark}
The corresponding problem in topology has an affirmative solution for all twists by work of Anderson and Hodgkin \cite{Anderson-Hodgkin}.
By analogy,
work on Problem \ref{problem:point} is likely to involve a computation of the $\KGL$-homology of the motivic Eilenberg-MacLane spaces $K(\Z/n,2)$ for $n\geq 1$ any integer.
\end{remark}
Twisted equivariant K-theory for compact Lie groups is closely related to loop groups \cite{FHT}.
It is natural to ask for a generalization of our construction of motivic twisted K-theory to an equivariant setting involving group schemes.  
\begin{problem}
\label{problem:equivariant}
Develop a theory of motivic twisted equivariant K-theory.
\end{problem}

The last problem we suggest is a very basic one.
The construction of motivic twisted K-theory should generalize to other examples.
We wish to single out hermitian K-theory as a closely related example of much interest.
In this case we expect the twistings arise from homotopy classes of maps from $\X$ to the classifying space $\B\B\mu_{2}$, 
i.e.~elements of the second mod-$2$ motivic cohomology group $\MZ^{2,1}(\X;\Z/2)$ of weight one.
\begin{problem}
\label{problem:hermitian}
Develop a theory of motivic twisted hermitian K-theory.
\end{problem}

\section{Graded Adams Hopf algebroids}
\label{section:hopfalgebroids}
Recall that a Hopf algebroid is a cogroupoid object in the category of commutative rings \cite[Appendix A1]{Ravenel}.
Let $(A,\Gamma)$ be a Hopf algebroid.
If the left unit $\eta_{L}\colon A\rto\Gamma$ classifying the domain is flat, 
or equivalently the right unit $\eta_{R}\colon A\rto\Gamma$ classifying the codomain is flat, 
then $(A,\Gamma)$ is called a flat Hopf algebroid. 
If $A\rto B$ is a ring map, 
we write $B\otimes_{A}\Gamma$ for the tensor product when $\Gamma$ is given an $A$-module structure via $\eta_{L}$ and 
$\Gamma\otimes_{A}B$ when $\Gamma$ is given an $A$-module structure via $\eta_{R}$.
An $(A,\Gamma)$-comodule comprises an $A$-module $M$ together with a coassociative and unital map of left $A$-modules $M\rto\Gamma\otimes_{A}M$ 
(see e.g.~\cite[Appendix A1]{Ravenel}).
The category of $(A,\Gamma)$-comodules with the evident notion of a morphism is an abelian category provided $\Gamma$ is a flat right $A$-module via $\eta_{R}$.

Likewise, 
a graded Hopf algebroid is  a cogroupoid object in the category of graded commutative rings \cite[Appendix A1]{Ravenel}.
The notions of flat graded Hopf algebroids and comodules over a graded Hopf algebroid are defined exactly as in the ungraded setting. 

The examples of Hopf algebroids of main interest in stable homotopy theory are so-called ``Adams Hopf algebroids.'' 
In the graded setting we make the following definition:
A graded Hopf algebroid $(A,\Gamma)$ is called a graded Adams Hopf algebroid if $\Gamma$ is the colimit of a filtered system of graded comodules which are finitely generated 
and projective as graded $A$-modules.
\begin{proposition}
\label{proposition:HopfalgebroidstructureKGL}
The pair $(\KGL_{\ast},\KGL_{\ast}\KGL)$ is a flat graded Adams Hopf algebroid.
\end{proposition}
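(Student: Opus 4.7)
The plan is to verify the three conditions packaged into ``flat graded Adams Hopf algebroid'' in turn: cogroupoid structure, flatness of a unit, and expressibility of $\KGL_{\ast}\KGL$ as a filtered colimit of comodules that are finitely generated and projective over $\KGL_{\ast}$.

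The cogroupoid (Hopf algebroid) structure comes standardly from the commutative motivic ring spectrum structure on $\KGL$. The two units arise from the two inclusions $\KGL\wedge S\to\KGL\wedge\KGL$ and $S\wedge\KGL\to\KGL\wedge\KGL$, the comultiplication from inserting the unit $S\to\KGL$ in the middle factor of $\KGL\wedge\KGL\wedge\KGL$, the counit from the multiplication $\KGL\wedge\KGL\to\KGL$, and the antipode from the twist. The cogroupoid axioms follow formally from the $E_\infty$-ring structure on $\KGL$ once flatness of a unit is known, so the existence of a well-defined Hopf algebroid structure is settled in tandem with the next step.

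For flatness, I would invoke the base change isomorphism (\ref{equation:secondisomorphism}),
\begin{equation*}
\KGL_{\ast}\KGL\cong\KGL_{\ast}\otimes_{\KU_{\ast}}\KU_{\ast}\KU,
\end{equation*}
together with the classical fact that $\KU_{\ast}\KU$ is flat over $\KU_{\ast}$ (Adams--Harris, \cite[Part II, \S13]{Adams}). Flatness is preserved under base change along $\KU_{\ast}\to\KGL_{\ast}$, so $\KGL_{\ast}\KGL$ is flat over $\KGL_{\ast}$ via either unit.

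For the Adams condition, I would exploit the motivic Snaith theorem from \cite{Gepner-Snaith}, \cite{Spitzweck-Ostvar}, writing
\begin{equation*}
\KGL\simeq\hocolim_{k,n}\;\Sigma^{-2k,-k}\Sigma^{\infty}\PP^{n}_{+},
\end{equation*}
as a filtered homotopy colimit of shifts of suspension spectra of finite projective spaces. Each $X_{k,n}\equiv\Sigma^{-2k,-k}\Sigma^{\infty}\PP^{n}_{+}$ is strongly dualizable in $\SH(S)$ (smooth projective base change, cf.~\cite{Hu}), and $\KGL_{\ast}(X_{k,n})$ is a free $\KGL_{\ast}$-module of finite rank. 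Dualizability gives the K\"unneth isomorphism
\begin{equation*}
\KGL_{\ast}(\KGL\wedge X_{k,n})\cong\KGL_{\ast}\KGL\otimes_{\KGL_{\ast}}\KGL_{\ast}(X_{k,n}),
\end{equation*}
so smashing the unit $S\to\KGL$ with $\KGL\wedge X_{k,n}$ equips $\KGL_{\ast}(X_{k,n})$ with a $\KGL_{\ast}\KGL$-comodule structure compatible with the transition maps of the filtered system. Since $\KGL$ is stably cellular \cite[Theorem 6.2]{Dugger-Isaksen} and $\KGL$-homology commutes with the filtered homotopy colimit defining it, we conclude
\begin{equation*}
\KGL_{\ast}\KGL=\colim_{k,n}\KGL_{\ast}(X_{k,n}),
\end{equation*}
which is the required presentation.

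The main technical obstacle is not any single step but rather bookkeeping: one needs to verify that the comodule structures on the $\KGL_{\ast}(X_{k,n})$ assemble coherently into a filtered system \emph{in comodules} (not just in $\KGL_{\ast}$-modules), so that the colimit is computed as a comodule. This amounts to naturality of the K\"unneth isomorphism in the filtration variable, which is routine given dualizability of the $X_{k,n}$ but deserves explicit mention. A secondary point worth noting is that flatness of $\eta_{R}$ (hence availability of the tensor product defining comodules) is already used to make sense of the filtered system at all, which is why flatness must be established before the Adams presentation.
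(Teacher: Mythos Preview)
Your proposal is correct and mirrors the paper's own argument: the paper gives two short proofs, one via the Bott tower/Snaith presentation of $\KGL$ (your Adams-condition step) and one via base change from the topological Hopf algebroid $(\KU_{\ast},\KU_{\ast}\KU)$ (your flatness step), so you have in effect combined both. Your explicit attention to the coherence of the comodule structures on the $\KGL_{\ast}(X_{k,n})$ is a welcome elaboration of what the paper leaves implicit; note, however, that the appeal to cellularity is unnecessary, since $\KGL$-homology commutes with filtered homotopy colimits for general reasons.
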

\begin{proof}
We give two proofs of this result.

Since homology commutes with sequential colimits we get that $\KGL_{\ast}(\PP^{\infty})$ is a filtered colimit of comodules which are finitely generated free $\KGL_{\ast}$-modules.
Hence the same holds for $\KGL_{\ast}\KGL$ by using the Bott tower (\ref{equation:Botttower}) for $\Sigma^{\infty}\PP^{\infty}_{+}$ as a model for $\KGL$. 

For the second proof,
recall the base change isomorphism in (\ref{equation:secondisomorphism}), 
\begin{equation*}
\KGL_{\ast}\KGL
\cong
\KGL_{\ast}\otimes_{\KU_{\ast}}\KU_{\ast}\KU.
\end{equation*}
By the topological analogue of the first proof we see that $(\KU_{\ast},\KU_{\ast}\KU)$ is a flat Adams Hopf algebroid.
We conclude by pulling back the filtered colimit to the tensor product.
\end{proof}

\begin{proposition}
\label{proposition:hopfcomodulealgebra}
Let $(A,\Gamma)$ be a graded Hopf algebroid and $A\rto B$ a graded ring map.
Suppose $B$ is a graded $(A,\Gamma)$-comodule algebra.
\begin{itemize}
\item[(i)]
The pair $(B,B\otimes_{A}\Gamma)$ is a graded Hopf algebroid.
\item[(ii)]
If $(A,\Gamma)$  is flat, 
then so is $(B,B\otimes_{A}\Gamma)$.
\item[(iii)]
If $(A,\Gamma)$  is a graded Adams Hopf algebroid, 
then so is $(B,B\otimes_{A}\Gamma)$.
\end{itemize}
\end{proposition}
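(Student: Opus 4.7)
All three claims should follow by transporting structure under base change for the cogroupoid scheme underlying $(A,\Gamma)$, so the task is essentially to spell out how a comodule algebra induces an action groupoid and to observe that flatness and the Adams property are stable under this base change.

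For part (i), I would realize $(B, B\otimes_A \Gamma)$ as the action groupoid for the groupoid scheme represented by $(A,\Gamma)$ acting on $\mathrm{Spec}\,B$ via the comodule algebra structure on $B$. Concretely, I would define the left unit by $\eta_L(b) = b\otimes 1$; the right unit by factoring the coaction $\psi\colon B \to \Gamma\otimes_A B$ through the antipode of $(A,\Gamma)$ (which swaps the two $A$-module structures on $\Gamma$) to land in $B\otimes_A\Gamma$; the counit by $b\otimes\gamma \mapsto b\cdot\epsilon_\Gamma(\gamma)$; the antipode by applying $B\otimes_A-$ to the antipode of $\Gamma$; and the comultiplication by applying $B\otimes_A-$ to $\Delta_\Gamma$ followed by the canonical identification $(B\otimes_A\Gamma)\otimes_B(B\otimes_A\Gamma) \cong B\otimes_A\Gamma\otimes_A\Gamma$. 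The cogroupoid identities for this new structure will then reduce to the cogroupoid identities for $(A,\Gamma)$ together with coassociativity, counitality, and multiplicativity of $\psi$.

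For part (ii), once (i) is in place the left unit of $(B, B\otimes_A\Gamma)$ is exactly the pushout of $\eta_L^\Gamma\colon A \to \Gamma$ along $A \to B$. Flatness is stable under arbitrary pushouts of commutative rings, so flatness of $\eta_L^\Gamma$ passes to flatness of $B \to B\otimes_A\Gamma$. For part (iii), I would decompose $\Gamma = \colim_i \Gamma_i$ as a filtered colimit of $(A,\Gamma)$-subcomodules $\Gamma_i$ that are finitely generated and projective as graded $A$-modules. Because $B\otimes_A-$ commutes with filtered colimits, $B\otimes_A\Gamma = \colim_i (B\otimes_A\Gamma_i)$; base change preserves finite generation and projectivity, so each $B\otimes_A\Gamma_i$ is finitely generated projective over $B$. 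Finally, applying $B\otimes_A-$ to the coaction $\Gamma_i \to \Gamma\otimes_A\Gamma_i$ and identifying the target with $(B\otimes_A\Gamma)\otimes_B(B\otimes_A\Gamma_i)$ equips each $B\otimes_A\Gamma_i$ with a $(B, B\otimes_A\Gamma)$-comodule structure, the comodule axioms being inherited.

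The step that will require the most care is part (i): writing $\eta_R$ unambiguously and verifying the cogroupoid axioms calls for attention to the distinct $A$-module structures on $\Gamma$ coming from $\eta_L^\Gamma$ and $\eta_R^\Gamma$ and the way the antipode bridges them. Once this is done, parts (ii) and (iii) reduce to the routine base-change facts recorded above.
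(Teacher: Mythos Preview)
Your proposal is correct and follows essentially the same strategy as the paper: part (i) via the action groupoid of the groupoid scheme $\mathrm{Spec}\,\Gamma \rightrightarrows \mathrm{Spec}\,A$ on $\mathrm{Spec}\,B$, part (ii) by stability of flatness under base change, and part (iii) by applying $B\otimes_A -$ to the filtered system of dualizable sub-comodules of $\Gamma$. The only difference is packaging: for (i) the paper works entirely on the level of functors of points---for each graded commutative test algebra $C$ one has a groupoid $(\Hom(A,C),\Hom(\Gamma,C))$ acting on the set $\Hom(B,C)$, and the action groupoid $(\Hom(B,C),\Hom(B,C)\times_{\Hom(A,C)}\Hom(\Gamma,C))$ is represented by $(B,B\otimes_A\Gamma)$---which sidesteps the explicit bookkeeping with $\eta_R$, the antipode, and the two $A$-module structures that you (rightly) flag as the delicate step.
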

\begin{proof}
For $C$ be a graded (commutative) algebra,
let $X=\Hom(A,C)$, $M=\Hom(\Gamma,C)$ and $Y=\Hom(B,C)$. 
Then $(X,M)$ is a groupoid and $Y$ is a set over $X$ equipped with an $M$-action. 
It is easily seen that the pair $(Y,Y \times_X M)$ acquires the structure of a groupoid. 
This settles the first part.

The second part follows by a standard base change argument,
while the third point follows by pulling back the graded sub-comodules which are finitely generated projective as graded $A$-modules.
\end{proof}

The graded Hopf algebroid of primary interest in this paper is the following example.
\begin{proposition}
\label{proposition:Hopfalgebroidstructure}
The pair 
\begin{equation*}
(\KGL_{\ast}(\PP^{\infty}),\KGL_{\ast}\KGL\otimes_{\KGL_{\ast}}\KGL_{\ast}(\PP^{\infty}))
\end{equation*}
is a flat graded Adams Hopf algebroid.
\end{proposition}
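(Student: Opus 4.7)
The plan is to deduce this proposition directly from the two preceding results by taking $A = \KGL_\ast$, $\Gamma = \KGL_\ast\KGL$, and $B = \KGL_\ast(\PP^\infty)$, and then applying Proposition \ref{proposition:hopfcomodulealgebra} in full. By Proposition \ref{proposition:HopfalgebroidstructureKGL} the pair $(A,\Gamma) = (\KGL_\ast,\KGL_\ast\KGL)$ is already known to be a flat graded Adams Hopf algebroid, so once the hypotheses of Proposition \ref{proposition:hopfcomodulealgebra} are verified, parts (i), (ii), (iii) of that proposition respectively yield the Hopf algebroid structure, its flatness, and the Adams property for the pair in question.

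The content of the proof is therefore to check that $\KGL_\ast(\PP^\infty)$ is a graded $(\KGL_\ast,\KGL_\ast\KGL)$-comodule algebra over the canonical graded ring map $\KGL_\ast \to \KGL_\ast(\PP^\infty)$. First I would exhibit the graded $\KGL_\ast$-algebra structure on $\KGL_\ast(\PP^\infty)$: this is induced by the commutative $H$-space structure on $\PP^\infty \simeq \BG$, giving a commutative ring spectrum structure on $\Sigma^\infty \PP^\infty_+$, and hence a graded commutative $\KGL_\ast$-algebra structure on $\KGL_\ast(\PP^\infty) = \pi_{\ast,\ast}(\KGL \wedge \Sigma^\infty\PP^\infty_+)$. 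Next I would produce the $(\KGL_\ast,\KGL_\ast\KGL)$-comodule structure from the unit map $\unit \to \KGL$, which gives
\begin{equation*}
\KGL \wedge \Sigma^\infty\PP^\infty_+ \to \KGL \wedge \KGL \wedge \Sigma^\infty\PP^\infty_+.
\end{equation*}
Since $\KGL_\ast(\PP^\infty)$ is a free $\KGL_\ast$-module (with basis the $\beta_i$), the Künneth map
\begin{equation*}
\KGL_\ast\KGL \otimes_{\KGL_\ast} \KGL_\ast(\PP^\infty) \cong \pi_{\ast,\ast}(\KGL\wedge\KGL\wedge\Sigma^\infty\PP^\infty_+)
\end{equation*}
is an isomorphism, so one obtains the desired coaction, which is coassociative and counital by naturality of the unit and multiplication of $\KGL$. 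Compatibility with the ring structure on $\KGL_\ast(\PP^\infty)$ (i.e.\ that the coaction is a graded ring map) follows from the fact that both the product on $\KGL_\ast(\PP^\infty)$ and the coaction are induced by maps of commutative ring spectra, so the relevant diagram in $\SH(S)$ commutes.

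With the comodule algebra structure established, the three conclusions of Proposition \ref{proposition:hopfcomodulealgebra} apply verbatim and finish the proof. The only genuine point of care — and hence the most delicate step — is the verification that the coaction really lands in the tensor product and is coassociative and unital; this reduces to the base-change identification $\KGL_\ast\KGL \otimes_{\KGL_\ast} \KGL_\ast(\PP^\infty) \cong \KGL_\ast(\KGL \wedge \Sigma^\infty\PP^\infty_+)$, which holds because $\KGL_\ast(\PP^\infty)$ is $\KGL_\ast$-flat (indeed free). Everything else is a formal consequence of the already-established flat Adams Hopf algebroid structure on $(\KGL_\ast,\KGL_\ast\KGL)$.
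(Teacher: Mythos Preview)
Your proposal is correct and follows essentially the same route as the paper: both apply Proposition~\ref{proposition:hopfcomodulealgebra} with $A=\KGL_\ast$, $\Gamma=\KGL_\ast\KGL$, $B=\KGL_\ast(\PP^\infty)$, invoking Proposition~\ref{proposition:HopfalgebroidstructureKGL} for the flat Adams property of $(A,\Gamma)$. The paper's proof is a terse two lines and alludes to the comodule algebra structure (the details you spell out are exactly those recorded in Remark~\ref{remark:hopfalgebroidremark}), so your expanded verification of that hypothesis is welcome but not a departure.
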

\begin{proof}
Note first that $\KGL_{\ast}(\PP^{\infty})$ has the structure of  graded comodule algebra over $(\KU_{\ast},\KU_{\ast}\KU)$. 
The result follows from Propositions \ref{proposition:HopfalgebroidstructureKGL} and \ref{proposition:hopfcomodulealgebra}.
\end{proof}

\begin{remark}
\label{remark:hopfalgebroidremark}
The left unit map denoted by $\eta_{\KGL_{\ast}(\PP^{\infty})}$ is determined by the composite map
\begin{equation*}
\KGL\wedge\Sigma^{\infty}\PP^{\infty}_{+}
\cong
\KGL\wedge\unit\wedge\Sigma^{\infty}\PP^{\infty}_{+}
\rto
\KGL\wedge\KGL\wedge\Sigma^{\infty}\PP^{\infty}_{+}
\end{equation*}
and the right unit map by 
\begin{equation*}
\KGL\wedge\Sigma^{\infty}\PP^{\infty}_{+}
\cong
\unit\wedge\KGL\wedge\Sigma^{\infty}\PP^{\infty}_{+}
\rto
\KGL\wedge\KGL\wedge\Sigma^{\infty}\PP^{\infty}_{+}.
\end{equation*}
(Here we make use of the unit map from the motivic sphere spectrum $\unit$ to $\KGL$.)

We use the isomorphism
\begin{equation*}
\KGL_{\ast}(\KGL\wedge\Sigma^{\infty}\PP^{\infty}_{+})
\cong
\KGL_{\ast}\KGL
\otimes_{\KGL_{\ast}}
\KGL_{\ast}(\PP^{\infty}).
\end{equation*}

Passing to $\KGL$-homology under the left unit map yields a map 
\begin{equation*}
\KGL_{\ast}(\PP^{\infty})
\rto
\KGL_{\ast}\KGL
\otimes_{\KGL_{\ast}}
\KGL_{\ast}(\PP^{\infty})
\end{equation*}
displaying $\KGL_{\ast}(\PP^{\infty})$ as a comodule over $\KGL_{\ast}\KGL$.

The augmentation is determined by the multiplication on $\KGL$ via the map
\begin{equation*}
\KGL\wedge\KGL\wedge\Sigma^{\infty}\PP^{\infty}_{+}
\rto
\KGL\wedge\Sigma^{\infty}\PP^{\infty}_{+}.
\end{equation*}
\end{remark}

The following is the graded version of the notion of Landweber exactness introduced by Hovey and Strickland in \cite[Definition 2.1]{Hovey-Strickland}.
\begin{definition}
Suppose $(A,\Gamma)$ is a flat graded Hopf algebroid.
Then a graded ring map $A\rto B$ is Landweber exact over $(A,\Gamma)$ if the functor $-\otimes_{A}B$ from graded $(A,\Gamma)$-comodules to graded $B$-modules is exact.
\end{definition}

By abuse of notation we let $\eta_{L}$ denote the composite map 
\begin{equation*}
A
\overset{\eta_{L}}{\rto}
\Gamma
\cong
\Gamma\otimes_{A}A
\rto
\Gamma\otimes_{A}B.
\end{equation*}
The next lemma is well known. 
For the convenience of the reader we shall sketch a proof since the result is employed in the proof of Theorem \ref{theorem:landweberexactness}.
\begin{lemma}
\label{lemma:landweberexctnessflatness}
Suppose $(A,\Gamma)$ is a flat graded Hopf algebroid.
Then a graded ring map $A\rto B$ is Landweber exact over $(A,\Gamma)$ if and only if the map 
$\eta_{L}\colon A\rto\Gamma\otimes_{A}B$ is flat.
\end{lemma}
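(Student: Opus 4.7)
The plan is to deduce both implications from the associativity/commutativity identity
\[
(\Gamma \otimes_A N)\otimes_A B \;\cong\; N \otimes_A (\Gamma \otimes_A B),
\]
valid for every $A$-module $N$, where the left-hand side forms $\Gamma \otimes_A N$ via $\eta_R$ and tensors the result with $B$ using the residual $\eta_L$-action on the $\Gamma$-factor, while the right-hand side views $\Gamma \otimes_A B$ as an $A$-module via $\eta_L$. This rearrangement is the formal bridge between the two flatness statements.

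For the forward direction, I would take an arbitrary short exact sequence of $A$-modules $0\to N'\to N\to N''\to 0$, apply $\Gamma \otimes_A -$ (exact since $\eta_R$ is flat by the assumption on $(A,\Gamma)$) to obtain a short exact sequence of extended comodules, then apply $-\otimes_A B$ (exact on comodules by the Landweber hypothesis). The rearrangement above rewrites the resulting sequence as $-\otimes_A(\Gamma \otimes_A B)$ applied to the original, so $\Gamma \otimes_A B$ is flat over $A$ via $\eta_L$.

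For the converse, starting from a short exact sequence of comodules $0\to M'\to M\to M''\to 0$, I would exploit the fact that the coaction $\psi_M\colon M\to \Gamma \otimes_A M$ is split as an $A$-linear map by $\epsilon\otimes\mathrm{id}_M$, using the counit identity $\epsilon\eta_L=\mathrm{id}_A$, and that this splitting is natural in $M$. After tensoring with $B$ and invoking the rearrangement, one obtains a natural map of three-term sequences whose top row is $0\to M'\otimes_A B\to M\otimes_A B\to M''\otimes_A B\to 0$, whose bottom row is $0\to M'\otimes_A(\Gamma\otimes_A B)\to M\otimes_A(\Gamma\otimes_A B)\to M''\otimes_A(\Gamma\otimes_A B)\to 0$, and whose vertical maps admit a natural retraction. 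The bottom row is exact by the $\eta_L$-flatness hypothesis on $\Gamma \otimes_A B$, and hence so is the top row, being a retract of an exact complex.

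The main obstacle is purely bookkeeping: tracking which of $\eta_L$ or $\eta_R$ governs each instance of $\otimes_A$. Once the roles are kept straight --- $\eta_R$ for the exactness of $\Gamma \otimes_A -$ and $\eta_L$ for pairing $\Gamma$ with $B$ --- both implications reduce to formal manipulations of tensor products, associativity over the commutative base $A$, and the counit splitting of the coaction.
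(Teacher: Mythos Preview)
Your proposal is correct and follows essentially the same approach as the paper's proof. Both arguments use the same two ideas: for the forward direction, pass to extended comodules $\Gamma\otimes_A N$ (exact since the Hopf algebroid is flat) and then apply the Landweber-exact functor $-\otimes_A B$; for the converse, exploit that the coaction $M\to\Gamma\otimes_A M$ is naturally split by the counit, so that $M\otimes_A B$ is a natural retract of $(\Gamma\otimes_A M)\otimes_A B$ and inherits exactness. The paper's write-up is considerably terser (it phrases everything in terms of preservation of monomorphisms rather than short exact sequences), but the content is identical; your version simply makes the tensor rearrangement and the counit splitting explicit.
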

\begin{proof}
The only if implication holds because $\Gamma\otimes_{A}-$ preserves monomorphisms between graded $A$-modules.  
Conversely, 
for every graded $A$-comodule $M$, 
the graded coaction map $M\rto\Gamma\otimes_{A}M$ is a retract.  
Thus for a monomorphism of graded comodules $M\rto N$ the map $B\otimes_{A}M\rto B\otimes_{A}N$ is a retract of 
$B\otimes_{A}\Gamma\otimes_{A}M\rto B\otimes_{A}\Gamma\otimes_{A}N$.
\end{proof}
\begin{remark}
In the proof of Theorem \ref{theorem:edgemapisomorphism} we could have worked with the Hopf algebroid $(\KU_{\ast}(\CP^{\infty}),\KU_{\ast}\KU\otimes_{\KU_{\ast}}\KU_{\ast}(\CP^{\infty}))$ 
by restricting the comodule structure and using (ungraded) Landweber exactness.
In this way one can bootstrap a proof of Theorem \ref{theorem:edgemapisomorphism} more directly from \cite{Khorami} by using base change isomorphisms with no mention of graded Hopf algebroids.
In the same spirit,
we note there is an isomorphism
\begin{equation*}
\KGL_{\ast}^{\tau}(\X)
\cong
\KGL_{\ast}(\X^{\tau})\otimes_{\KU_{\ast}(\CP^{\infty})}\KU_{\ast}.
\end{equation*}
\end{remark}

{\bf Acknowledgements.}
We gratefully acknowledge hospitality and support from IMUB at the Universitat de Barcelona in the framework of the NILS mobility project. 
The second author benefitted from the hospitality of IAS in Princeton and TIFR in Mumbai during the preparation of this paper.
Both authors are partially supported by the RCN 185335/V30.

\bibliographystyle{plain} 
\bibliography{motivictwisted}
\vspace{0.1in}

\begin{center}
Department of Mathematics, University of Oslo, Norway.\\
e-mail: markussp@math.uio.no
\end{center}
\begin{center}
Department of Mathematics, University of Oslo, Norway.\\
e-mail: paularne@math.uio.no
\end{center}
\end{document}